\providecommand{\U}[1]{\protect\rule{.1in}{.1in}}
\newtheorem{theorem}{Theorem}
\newtheorem{definition}[theorem]{Definition}
\newtheorem{lemma}[theorem]{Lemma}
\newtheorem{proposition}[theorem]{Proposition}
\newtheorem{remark}[theorem]{Remark}
\newenvironment{proof}[1][Proof]{\noindent\textbf{#1.} }{\ \rule{0.5em}{0.5em}}
\begin{document}

\title{On the generalized dimensions of physical measures of chaotic flows}
\author{Th\'{e}ophile Caby$%
%TCIMACRO{\U{b0}}%
%BeginExpansion
{{}^\circ}%
%EndExpansion
$, Michele Gianfelice$^{\#}$\\$%
%TCIMACRO{\U{b0}}%
%BeginExpansion
{{}^\circ}%
%EndExpansion
$CMUP, Departamento de Matem\'{a}tica \\
Faculdade de Ci\^{e}ncias, Universidade do Porto \\
Rua do Campo Alegre s/n \\
P-4169007 Porto\\
caby.theo@gmail.com\\
$^{\#}$Dipartimento di Matematica e Informatica\\
Universit\`{a} della Calabria\\
Campus di Arcavacata\\
Ponte P. Bucci - cubo 30B\\
I-87036 Arcavacata di Rende (CS)\\
gianfelice@mat.unical.it\\
\\
}
\date{}
\maketitle

\begin{abstract}
We prove that if $\mu$ is the physical measure of a $C^2$ flow in $\mathbb{R}^d, d \geq 3,$
diffeomorphically conjugated to a suspension flow based on a Poincar\'{e}
application $R$ with physical measure $\mu_{R}$, then $D_{q}(\mu)=D_{q}(\mu
_{R})+1$, where $D_{q}$ denotes the generalized dimension of order $q
\neq1$. We also show that a similar result holds for the local dimensions of $\mu$ and, 
under the additional hypothesis of exact-dimensionality of $\mu_{R}$, that our
result extends to the case $q=1$. We apply these results to estimate the
$D_{q}$ spectrum associated with R\"ossler systems and turn our attention to Lorenz-like flows,
proving the existence of their information dimension and giving a lower bound for their generalized dimensions.
\end{abstract}

\tableofcontents

\bigskip

\begin{description}
\item[AMS\ subject classification:] {\small 37C10, 37D45, 37E05}

\item[Keywords and phrases:] {\small generalized dimensions, chaotic
attractor, R\"ossler flow, singular-hyperbolic attractors.}
\end{description}

\bigskip

\section{Introduction}

Generalized dimensions have been introduced in 1983 by Hentschel and Procaccia
to describe the variety of different scaling behaviors of physical measures of
chaotic dynamical systems \cite{HP}. In addition, this spectrum of dimensions also
plays a central role in their statistical behavior.
In particular, they modulate their synchronization properties \cite{CFMVY,RT,BR}, 
as well the probability distributions of local quantities of interest such as local dimensions and recurrence times
\cite{CFMVY,CRS}. 

Let us now recall the definition of these objects and that of some related notions.

\begin{definition}
Let $\nu$ be a probability measure on $\left(  \mathbb{R}^{d},\mathcal{B}%
\left(  \mathbb{R}^{d}\right)  \right)  ,d\geq1$ and, for any $x\in
\mathbb{R}^{d},$ let $B_{r}^{\left(  d\right)  }\left(  x\right)  :=\left\{
y\in\mathbb{R}^{d}:\left\vert x-y\right\vert \leq r\right\}  $ be the
Euclidean ball of radius $r$ centered in $x.$

\begin{enumerate}
\item Setting
\begin{equation}
D_{q}^{+}\left(  \nu\right)  :=\left\{
\begin{array}
[c]{cc}%
\lim\sup_{r\downarrow0}\frac{\log\int\nu\left(  dx\right)
\nu^{q-1}\left(  B_{r}^{\left(  d\right)  }\left(  x\right)  \right)  }{(q-1)\log
r} & \text{if }q\neq1\\
\lim\sup_{r\downarrow0}\frac{\int\nu\left(  dx\right)  \log\nu\left(
B_{r}^{\left(  d\right)  }\left(  x\right)  \right)  }{\log r} & \text{if }q=1
\end{array}
\right.
\end{equation}
and
\begin{equation}
D_{q}^{-}\left(  \nu\right)  :=\left\{
\begin{array}
[c]{cc}%
\lim\inf_{r\downarrow0}\frac{\log\int\nu\left(  dx\right)
\nu^{q-1}\left(  B_{r}^{\left(  d\right)  }\left(  x\right)  \right)  }{(q-1)\log
r} & \text{if }q\neq1\\
\lim\inf_{r\downarrow0}\frac{\int\nu\left(  dx\right)  \log\nu\left(
B_{r}^{\left(  d\right)  }\left(  x\right)  \right)  }{\log r} & \text{if }q=1
\end{array}
\right.  \ ,
\end{equation}
we denote by $D_{q}\left(  \nu\right)  $ the generalized dimension of order
$q$ of $\nu$ provided these limits coincide.

\item Setting
\begin{equation}
d_{\nu}^{+}(x):=\limsup_{r\downarrow0}\frac{\log\nu(B_{r}^{(d)}(x))}{\log
r}\,
\end{equation}
and
\begin{equation}
d_{\nu}^{-}(x):=\liminf_{r\downarrow0}\frac{\log\nu(B_{r}^{(d)}(x))}{\log
r}\ ,
\end{equation}
we denote by $d_{\nu}\left(  x\right)  $ the local dimension at $x\in
\mathbb{R}^{d}$ of $\nu$ provided these limits coincide.

\item We say that $\nu$ is exact-dimensional if $d_{\nu}$ exists and is constant $\nu$-almost everywhere.
\end{enumerate}
\end{definition}

Generalized dimensions and local dimensions are closely related within the so-called multifractal formalism, 
which has been developed in analogy with thermodynamics \cite{Pe}. 
The $D_{q}$ spectrum provides a comprehensive description of the geometric
structure of the measure, including the information dimension $(q=1)$, the
correlation dimension $(q=2)$, and the box-counting dimension of the support
of the measure $(q=0)$. The values for $q=+\infty$ and $q=-\infty$ correspond
respectively to the infimum and the supremum values of the local dimensions. 
In different recent publications, it is used to gauge the
convergence of the geometric structure of the physical measures of coupled
chaotic systems, in a process that was termed \emph{topological synchronization}
\cite{L1,L2}.
Although many efforts have been put forward to numerically estimate these quantities, the designed algorithms 
are known to be subject to important numerical errors, especially for negative
$q$ \cite{PSR}. For flows, this adds up to the accumulated errors
associated with the discretization process, yielding unreliable estimates.

In this paper, we will study in detail the $D_{q}$ spectrum associated with
physical measures of chaotic flows in $\mathbb{R}^{d},d\geq 3,$ generated by
a $C^1$ vector field $V$ that are diffeomorphically conjugated to a
suspension flow. To be more specific, let $\Sigma $ be a compact $\left(
d-1\right) $-dimensional manifold that stands for a Poincar\'{e}\ surface
and $R:\Sigma \circlearrowleft $ a return map with physical measure $\mu
_{R}.$ We denote $\mathfrak{t}$ the associated roof function and suppose $%
\mathfrak{t}\in L^{1}(\mu _{R}).$ Setting $\mathfrak{s}_{0}:=0,\mathfrak{s}%
_{1}:=\mathfrak{t}$ and $\forall n\geq 2,$ 
\begin{equation}
\Sigma \ni \xi \longmapsto \mathfrak{s}_{n}\left( \xi \right)
:=\sum_{k=1}^{n}\mathfrak{t\circ }R^{k}\in \mathbb{R}_{+}\ ,
\end{equation}%
for any $\xi \in \Sigma $ we denote by 
\begin{align}
\Sigma \times \mathbb{R}_{+}\ni \left( \xi ,t\right) \mapsto \mathfrak{n}%
\left( \xi ,t\right) & :=\sum_{n\geq 0}\mathbf{1}_{\left[ 0,t\right] }\circ 
\mathfrak{s}_{n}\left( \xi \right) \label{counter} \\
& =\max \left\{ n\in \mathbb{Z}_{+}:\mathfrak{s}_{n}\left( \xi \right) \leq
t\right\} \in \mathbb{Z}_{+}  \notag
\end{align}%
and by 
\begin{equation}
\Sigma _{\mathfrak{t}}:=\left\{ \left( \xi ,t\right) \in \Sigma \times 
\mathbb{R}_{+}:t\in \lbrack 0,\mathfrak{t}\left( \xi \right) )\right\} \ .
\end{equation}%
We then define the semi-flow $\left( S^{t},\ t\geq 0\right) $ 
\begin{equation}
\Sigma _{\mathfrak{t}}\ni \left( \xi ,s\right) \longmapsto S^{t}\left( \xi
,s\right) :=\left( R^{\mathfrak{n}\left( \xi ,t+s\right) }\left( \xi \right)
,t+s-\mathfrak{s}_{\mathfrak{n}\left( \xi ,t+s\right) }\left( \xi \right)
\right) \in \Sigma _{\mathfrak{t}}\ ,\;t\geq 0\ .
\end{equation}%
whose physical measure $\mu _{S}$ has density $\frac{\mathbf{1}_{\left[ 0,%
\mathfrak{t}\right] }}{\mu _{R}\left[ \mathfrak{t}\right] }$ w.r.t. $\mu
_{R}\otimes \lambda ,$ where $\lambda $ is the Lebesgue measure on $\left( 
\mathbb{R},\mathcal{B}\left( \mathbb{R}\right) \right) .\left( S^{t},\ t\geq
0\right) $ naturally defines an equivalence relation among the elements of $%
\Sigma \times \mathbb{R}_{+}.$ Namely, $\left( \xi _{1},t_{1}\right) ,\left(
\xi _{2},t_{2}\right) \in \Sigma \times \mathbb{R}_{+}$ are said to be
equivalent, and this property is denoted by $\left( \xi _{1},t_{1}\right)
\sim \left( \xi _{2},t_{2}\right), $ if there exist $\left( \xi ,s\right) \in
\Sigma _{\mathfrak{t}},n_{1},n_{2}\in \mathbb{Z}_{+}$ such that $\left( \xi
_{1},t_{1}\right) =S^{\mathfrak{s}_{n_{1}}\left( \xi \right) }\left( \xi
,s\right)$ and $\left( \xi _{2},t_{2}\right) =S^{\mathfrak{s}_{n_{2}}\left( \xi
\right) }\left( \xi ,s\right) .$ Denoting by $\mathcal{V}:=\Sigma \times 
\mathbb{R}_{+}/\sim $ the quotient space and by $\pi :\Sigma \times \mathbb{R%
}_{+}\rightarrow \mathcal{V}$ the natural projection let us denote by $%
\mathcal{B}\left( \mathcal{V}\right) $ the Borel $\sigma $algebra generated
by the open sets of quotient topology which makes $\pi $ a continuous, hence
measurable, function. Then, denoting by $\left( \bar{S}^{t},\ t\geq 0\right) 
$ the semi-flow 
\begin{equation}
\Sigma \times \mathbb{R}_{+}\ni \left( \xi ,s\right) \longmapsto \bar{S}%
^{t}\left( \xi ,s\right) :=\left( R^{\mathfrak{n}\left( \xi ,t+s\right)
}\left( \xi \right) ,s+t\right) \in \Sigma \times \mathbb{R}_{+}\ ,\;t\geq
0\ ,
\end{equation}%
$\left( S^{t},\ t\geq 0\right) $ coincides with the semi-flow that is the
image under $\pi $ of $\left( \bar{S}^{t},\ t\geq 0\right) ,$ namely $\pi
\circ \bar{S}^{t}=S^{t}\circ \pi .$

We suppose that there exists a flow $\left( \chi ^{t},t\in \mathbb{R} \right) $
in $\mathbb{R}^{d},$ an open set $\mathcal{U}\subset \mathbb{R}^{d}$ such
that, setting 
\begin{gather}
\Sigma \times \mathbb{R}_{+}\ni \left( \xi ,t\right) \longmapsto \theta
\left( \xi ,t\right) :=\chi ^{t}\left( \xi \right) \in \mathcal{U\ }, \\
\theta \left( \xi ,\mathfrak{t}\left( \xi \right) \right) =\chi ^{\mathfrak{t%
}\left( \xi \right) }\left( \xi \right) =R\left( \xi \right) \ .
\end{gather}%
Hence, let $\Theta :\mathcal{V}\mapsto \mathcal{U}$ be the diffeomorphism
such, $\forall t\in \mathbb{R}_{+},$%
\begin{equation}
\Theta \circ S^{t}=\theta \circ \bar{S}^{t}\ \label{Theta}.
\end{equation}%
By construction $\pi $ acts on $\Sigma _{\mathfrak{t}}$ as the identity,
therefore $\pi \left( \mathring{\Sigma}_{\mathfrak{t}}\right) =\mathring{%
\Sigma}_{\mathfrak{t}}$ is open in $\mathcal{V}$ and consequently is in $%
\mathcal{B}\left( \mathcal{V}\right) .$ The physical measure $\mu $ of $%
\left( \chi ^{t},t\geq 0\right) $ is therefore the push-forward under $%
\Theta $ of $\mu _{S},$ namely $\mu :=\Theta _{\ast }\left( \mu _{S}\right)
, $ whose support is in $\Theta \circ \pi \left( \mathring{\Sigma}_{%
\mathfrak{t}}\right) =\Theta \left( \mathring{\Sigma}_{\mathfrak{t}}\right)
\subseteq \mathcal{U}.$\\
We also define 
\begin{align}
\Sigma _{\mathfrak{t}}& \ni \left( \xi ,t\right) \longmapsto \pi _{1}\left(
\xi ,t\right) :=t\in \mathbb{R}_{+}\ , \\
\Sigma _{\mathfrak{t}}& \ni \left( \xi ,t\right) \longmapsto \pi _{2}\left(
\xi ,t\right) :=\xi \in \Sigma \ .
\end{align}

We now aim to study the generalized dimensions associated with such systems, which encompass well-studied flows such as the classical Lorenz one.  
In \cite{RT}, it has been proven that for a suspension flow built on a
bi-Lipschitz base map $R$ and on a Lipschitz roof function $\mathfrak{t}$
the correlation dimension of the invariant measure for the flow $D_{2}\left(
\mu _{S}\right) $ satisfies the equality $D_{2}\left( \mu _{S}\right)
=D_{2}\left( \mu _{R}\right) +1,$ where $\mu _{R}$ is the invariant measure
for $R.$ Thus, in our framework, this implies $D_{2}\left( \mu \right)
=D_{2}\left( \mu _{R}\right) +1.$ In this paper we show that, for flows whose
physical measures can be constructed as presented above, this equality holds
not only for $q=2$ but for all $q\neq 1.$
We start by stating our main result, which relates $D_{q}(\mu)$ to $D_{q}(\mu_{R})$ for such flows. We then turn our attention to applications. We first discuss the case of the R\"ossler flow, since, from a numerical standpoint, the latter can be described as a suspension over a one-dimensional unimodal map, enabling to compute explicit estimates for its $D_{q}$ spectrum. In the final section, we focus on classical and geometric Lorenz flows, demonstrating the existence of their information dimension $D_{1}$ and providing a lower bound for their generalized dimensions.

\section{Main results}
The following theorem contains our major results relating the dimensions of a flow of the type just described to those of its base map.
\begin{theorem}
\label{mainT} Suppose $\mu$ is the physical measure of a $C^{2}$ flow in $\mathbb{R}^d$ constructed as in the previous section. Then, provided the different dimensions associated with $\mu_{R}$ exist, we have

\begin{enumerate}
\item For all $q\neq1$,
\begin{equation}
D_{q}\left(  \mu\right)  =D_{q}\left(  \mu_{R}\right)  +1\ . \label{Dq(mu)}%
\end{equation}

\item For all $x\in\mathcal{U}$,
\begin{equation}
d_{\mu}(x)=d_{\mu_{R}}(\xi)+1, \label{d(mu)}%
\end{equation}
where $\xi=\pi_{1}\circ\Theta^{-1}(x).$

\item If $\mu_{R}$ is exact-dimensional, then
\begin{equation}
D_{1}(\mu)=D_{1}(\mu_{R})+1. \label{D1(mu)}%
\end{equation}

\end{enumerate}
\end{theorem}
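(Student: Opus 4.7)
The plan is to transport all three statements via the diffeomorphism $\Theta$ to the abstract suspension space $\Sigma_{\mathfrak{t}}$, where $\mu_S=\frac{1}{\mu_R[\mathfrak{t}]}\mathbf{1}_{\Sigma_{\mathfrak{t}}}\cdot(\mu_R\otimes\lambda)$ has an almost-product structure, and then to exploit that structure. Since $\Theta$ is $C^{2}$ and hence locally bi-Lipschitz, for every $x=\Theta(\xi,t)\in\mathcal{U}$ there exist locally bounded constants $0<c(x)\leq C(x)<\infty$ with
\[
B_{cr}^{\Sigma}(\xi)\times(t-cr,t+cr)\ \subseteq\ \Theta^{-1}(B_r^{(d)}(x))\ \subseteq\ B_{Cr}^{\Sigma}(\xi)\times(t-Cr,t+Cr)
\]
for all sufficiently small $r$. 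After taking $\log$ and dividing by $\log r$ these multiplicative constants vanish, so it suffices to prove the three statements for $\mu_S$ using product boxes $B_r^{\Sigma}(\xi)\times(t-r,t+r)$ in place of Euclidean balls.

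For part 2, $\mathring{\Sigma}_{\mathfrak{t}}$ has full $\mu_S$-measure, so for $\mu$-a.e.\ $x$ one may assume that $(\xi,t):=\Theta^{-1}(x)$ lies in the interior and that $\mathfrak{t}$ is locally bounded below near $\xi$. For $r$ small enough that the box lies inside $\Sigma_{\mathfrak{t}}$, Fubini yields
\[
\mu_S\bigl(B_r^{\Sigma}(\xi)\times(t-r,t+r)\bigr)=\frac{2r}{\mu_R[\mathfrak{t}]}\mu_R\bigl(B_r^{\Sigma}(\xi)\bigr),
\]
and dividing $\log$ by $\log r$ gives $d_\mu(x)=1+d_{\mu_R}(\xi)$. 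For part 1, I would set $I_q(r):=\int\mu_S(d\zeta)\mu_S^{q-1}(B_r(\zeta))$ and combine two bounds: the universal upper bound $\mu_S(B_r(\zeta))\leq\frac{2r}{\mu_R[\mathfrak{t}]}\mu_R(B_r^{\Sigma}(\pi_2\zeta))$ obtained by discarding the factor $\mathbf{1}_{\Sigma_{\mathfrak{t}}}$, and the matching equality on a \emph{bulk} region $\Sigma_{\mathfrak{t}}^{(r)}\subseteq\Sigma_{\mathfrak{t}}$ whose complement has $\mu_S$-measure $O(r)$. Inserting these into $I_q(r)$ and pushing forward by $\pi_2$, whose pushforward of $\mu_S$ has density $\mathfrak{t}/\mu_R[\mathfrak{t}]$ with respect to $\mu_R$, yields
\[
I_q(r)\asymp r^{q-1}\int_{\Sigma}\mu_R(d\eta)\,\mu_R^{q-1}\bigl(B_r^{\Sigma}(\eta)\bigr),
\]
so $\log I_q(r)/[(q-1)\log r]\to 1+D_q(\mu_R)$. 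For $q<1$ the inequalities reverse and the boundary contribution must be estimated separately, using that $\mu_S(B_r(\zeta))\gtrsim r\,\mu_R(B_r^{\Sigma}(\pi_2\zeta))$ whenever the projected ball carries positive $\mu_R$-mass, keeping the $(q-1)$-th power controlled.

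Part 3 then follows from part 2: if $\mu_R$ is exact-dimensional, then $d_{\mu_R}(\xi)=D_1(\mu_R)$ $\mu_R$-a.e., hence $d_\mu(x)=D_1(\mu_R)+1$ $\mu$-a.e., and the standard identification of $D_1$ with the almost sure local dimension for exact-dimensional measures yields (\ref{D1(mu)}). This identification uses Fatou's lemma in one direction, and in the other a uniform $L^1(\mu_S)$-domination of $-\log\mu_S(B_r(\cdot))/(-\log r)$ coming from the product expression, where the $\log\mu_R(B_r^{\Sigma}(\xi))$ summand is integrable once $\mu_R$ has finite lower local dimension. The principal technical obstacle throughout is controlling boundary-of-$\Sigma_{\mathfrak{t}}$ and long-roof contributions uniformly in $r$: the sketch above pretends $\mathfrak{t}$ is bounded, whereas for the Lorenz-like applications $\mathfrak{t}$ diverges on a null set. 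A decomposition of $\Sigma$ into a compact bulk on which $\mathfrak{t}$ is bounded and a tail of small $\mu_R$-measure, together with the hypothesis $\mathfrak{t}\in L^1(\mu_R)$, is what will make the argument go through.
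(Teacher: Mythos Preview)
Your overall architecture matches the paper's: pull back via the bi-Lipschitz $\Theta$ to the suspension, exploit $\mu_S\propto\mu_R\otimes\lambda$ on product boxes, handle bounded $\mathfrak{t}$ first, then truncate for the $L^1$ case, and deduce part~3 from part~2 by dominated convergence. The paper packages these as Proposition~\ref{D_q} plus Lemmata~\ref{B-C}--\ref{Ld3} (bounded roof) and Lemma~\ref{Ld5} (truncation $\mathfrak{t}\wedge K$).

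The genuine gap is your very first reduction. The inclusion
\[
\Theta^{-1}\bigl(B_r^{(d)}(x)\bigr)\ \subseteq\ B_{Cr}^{\Sigma}(\xi)\times(t-Cr,t+Cr)
\]
is \emph{false} whenever $t$ is within $Cr$ of $0$ or of $\mathfrak{t}(\xi)$: under the identification $(\eta,\mathfrak{t}(\eta))\sim(R\eta,0)$ the preimage, read in $\Sigma_{\mathfrak{t}}$, splits into two pieces, one near $(\xi,t)$ and another near $(R^{\mp1}\xi,\cdot)$, and no single product box contains it. Consequently your ``universal upper bound'' $\mu_S(B_r(\zeta))\leq\frac{2r}{\mu_R[\mathfrak{t}]}\mu_R(B_r^{\Sigma}(\pi_2\zeta))$ is not universal, and the bulk/complement split does not control the integrand on the complement. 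The paper's remedy---the one idea your sketch is missing---is to use the \emph{flow invariance} of $\mu$: for $x$ in a fixed neighbourhood $\mathcal{U}_0$ of $\Sigma$ one replaces $B_r(x)$ by $\chi^{\pm s}(B_r(x))$ for a fixed $s\in(\kappa/4,\kappa/2)$, which has the same $\mu$-measure but now lies in $\mathcal{U}\setminus\mathcal{U}_0$, where the product-box sandwich holds with \emph{uniform} constants (Lemma~\ref{B-C}). This same device is also what upgrades part~2 from the $\mu$-a.e.\ statement you prove to the ``for all $x\in\mathcal{U}$'' actually claimed in the theorem.
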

As it will appear from the proof, if the limits defining these dimensions do not exist, similar results hold for $D_q^{\pm}(\mu)$ and $d_\mu^{\pm}$.

\subsection*{Proof of point 1}
This result turns out to be a corollary of the case when the roof function
$\mathfrak{t}$ is bounded from above. Therefore we will first
introduce this case in the following proposition whose proof is deferred to the appendix.

\begin{proposition}    
\label{D_q}Assume that $\kappa \le \mathfrak{t} \le \kappa^{-1}$ for some $0<\kappa<1$. Then, for all $q\neq1$ such that $D_{q}\left(  \mu_{R}\right)$ exists,
we have
\begin{equation}
D_{q}\left(  \mu\right)  =D_{q}\left(  \mu_{R}\right)  +1\ . \label{D_q(mu)1}%
\end{equation}
\end{proposition}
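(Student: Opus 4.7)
The plan is to pull the correlation integral for $\mu$ back through the diffeomorphism $\Theta$, use Fubini to peel off the flow direction, and exploit the uniform boundedness of $\mathfrak{t}$ to eliminate the resulting normalising constants. Since $\kappa\le\mathfrak{t}\le\kappa^{-1}$, the closure $\overline{\Sigma_{\mathfrak{t}}}$ is compact in $\Sigma\times\mathbb{R}_{+}$; because the flow $\chi^{t}$ is $C^{2}$, the diffeomorphism $\Theta$ is uniformly bi-Lipschitz on this closure. Fixing a product metric on $\Sigma\times\mathbb{R}$ (whose $\Sigma$-factor is equivalent to the restriction of the Euclidean metric of $\mathbb{R}^{d}$ to the compact $C^{1}$-submanifold $\Sigma$), there exist constants $L\ge 1$ and $r_{0}>0$ such that
\begin{equation*}
B_{r/L}^{\Sigma\times\mathbb{R}}(\xi,s)\,\subseteq\,\Theta^{-1}(B_{r}^{(d)}(x))\,\subseteq\, B_{Lr}^{\Sigma\times\mathbb{R}}(\xi,s)
\end{equation*}
for every $x=\Theta(\xi,s)\in\mathrm{supp}(\mu)$ and $r<r_{0}$.

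The key intermediate estimate I would establish next is
\begin{equation*}
\mu(B_{r}^{(d)}(x))\,\asymp\, r\cdot\mu_{R}(B_{r}^{(d)}(\xi)),
\end{equation*}
with constants independent of $x\in\mathrm{supp}(\mu)$ and $r<r_{0}$. For $(\xi,s)$ at vertical distance at least $Lr$ from $\{0,\mathfrak{t}(\xi)\}$, this is immediate from Fubini applied to $\mu_{S}=\mathbf{1}_{\Sigma_{\mathfrak{t}}}\cdot(\mu_{R}\otimes\lambda)/\mu_{R}[\mathfrak{t}]$: the $s$-interval contributes Lebesgue measure of order $r$ and the transverse cross-section contributes $\mu_{R}(B_{r}^{(d)}(\xi))$ up to metric-equivalence constants. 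Inserting this into
\begin{equation*}
I(r):=\int \mu(dx)\,\mu^{q-1}(B_{r}^{(d)}(x))=\int\mu_{S}(d(\xi,s))\,\mu^{q-1}(B_{r}^{(d)}(\Theta(\xi,s))),
\end{equation*}
raising the estimate to the power $q-1$, pulling out the factor $r^{q-1}$, and integrating $s$ out against the density (which produces the bounded factor $\mathfrak{t}(\xi)/\mu_{R}[\mathfrak{t}]\in[\kappa^{2},\kappa^{-2}]$), one obtains
\begin{equation*}
I(r)\,\asymp\, r^{q-1}\int\mu_{R}(d\xi)\,\mu_{R}^{q-1}(B_{r}^{(d)}(\xi)).
\end{equation*}
Taking the logarithm, dividing by $(q-1)\log r$, and letting $r\downarrow 0$ yields $D_{q}(\mu)=1+D_{q}(\mu_{R})$.

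The main obstacle is the boundary layer of points $(\xi,s)$ within vertical distance $Lr$ of $\{s=0\}\cup\{s=\mathfrak{t}(\xi)\}$, where $\Theta^{-1}(B_{r}^{(d)}(x))$ straddles the identification $(\eta,\mathfrak{t}(\eta))\sim(R(\eta),0)$ and hence splits into a bulk piece over $\xi$ and a wrap-around piece sitting above $R^{\pm 1}(\xi)$. I would deal with this by writing $\mu(B_{r}^{(d)}(x))$ as a sum of three terms of the form $r\cdot\mu_{R}(B_{r}^{(d)}(R^{j}\xi))$, $j\in\{-1,0,1\}$, and applying elementary inequalities for $a\mapsto a^{q-1}$ (subadditive when $0<q-1\le 1$, and controlled by $(a+b+c)^{q-1}\le 3^{q-2}(a^{q-1}+b^{q-1}+c^{q-1})$ when $q-1\ge 1$) to separate them; the $R$-invariance of $\mu_{R}$ then re-expresses each wrap-around contribution, via the change of variables $\xi\mapsto R^{\mp 1}(\xi)$, as an integral of the same form as the bulk, up to the bounded factor coming from the $\kappa$-control of $\mathfrak{t}$. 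All multiplicative constants accumulated through bi-Lipschitz of $\Theta$, metric comparison on $\Sigma$, density bounds, and boundary correction become additive $O(1)$ terms in the logarithm and are absorbed after division by $\log r$. The sign reversal in the two-sided $\asymp$ when $q<1$ is handled identically by exchanging the roles of the upper and lower bounds throughout.
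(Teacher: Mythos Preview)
Your overall strategy is sound and reaches the same intermediate estimate $I(r)\asymp r^{q-1}\int\mu_{R}(d\xi)\,\mu_{R}^{q-1}(B_{r}(\xi))$ as the paper, but the boundary-layer treatment differs. The paper never decomposes the preimage into bulk and wrap-around pieces; instead (Lemma~\ref{B-C}) it invokes the \emph{flow}-invariance $\mu(\chi^{s}(B_{r}(x)))=\mu(B_{r}(x))$ and, for $x=\Theta(\xi,t)$ close to $\Sigma$, chooses $s\in[-\kappa/2,\kappa/2]$ so that $\chi^{s}(x)=\Theta(\xi,t\pm s)$ lands at vertical distance bounded below from the identification locus. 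The translated ball then sits where $\Theta$ is honestly bi-Lipschitz in the product metric, the two-sided Gr\"onwall bound $|y-z|e^{-Ls}\le|\chi^{s}(y)-\chi^{s}(z)|\le|y-z|e^{Ls}$ controls the distortion under $\chi^{s}$, and no quotient geometry ever appears. Your route substitutes $R$-invariance of $\mu_{R}$ (applied in the outer integral) for flow-invariance of $\mu$; this also works---indeed the boundary contribution picks up an extra factor $O(r)$ from the $s$-integration over a layer of width $Lr$ and is negligible, not merely comparable---but requires tracking the quotient identification by hand. One caveat: your assertion that $\Theta$ is uniformly bi-Lipschitz on $\overline{\Sigma_{\mathfrak{t}}}$ with the product metric is false as written, since points $(\xi,\epsilon)$ and $(\eta,\mathfrak{t}(\eta)-\epsilon')$ with $R(\eta)$ near $\xi$ have images arbitrarily close in $\mathcal{U}$ but product-distance at least $\kappa$; the displayed inclusions therefore hold only at vertical distance $\ge Lr$ from the boundary, which is precisely the regime you then isolate anyway, so the subsequent argument is unaffected.
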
    

For the general case, first notice that, for any bounded measurable function $\varphi$ on $\Sigma$
and any $K>0,$%
\begin{align}
\mu_{R}\left[  \mathfrak{t}\varphi\right]   &  =\mu_{R}\left[  \mathfrak{t}%
\left(  \mathbf{1}_{\left[  0,K\right]  }\left(  \mathfrak{t}\right)
+\mathbf{1}_{\left(  K,\infty\right)  }\left(  \mathfrak{t}\right)  \right)
\varphi\right] \label{tK}\\
&  =\mu_{R}\left[  \left(  \mathfrak{t}\wedge K\right)  \varphi\right]
+\mu_{R}\left[  \left(  \mathfrak{t}-K\right)  \mathbf{1}_{\left(
K,\infty\right)  }\left(  \mathfrak{t}\right)  \varphi\right] \nonumber
\end{align}
and
\begin{align}
\left\vert \mu_{R}\left[  \left(  \mathfrak{t}-K\right)  \mathbf{1}_{\left(
K,\infty\right)  }\left(  \mathfrak{t}\right)  \varphi\right]  \right\vert  &
\leq\mu_{R}\left[  \left(  \mathfrak{t}-K\right)  \mathbf{1}_{\left(
K,\infty\right)  }\left(  \mathfrak{t}\right)  \left\vert \varphi\right\vert
\right] \\
&  \leq\left\Vert \varphi\right\Vert _{\infty}\int_{K}^{\infty}dt\mu
_{R}\left\{  \mathfrak{t}>t\right\}  \ .\nonumber
\end{align}
For any $\varepsilon>0,$ let $K_{\varepsilon}>0$ be such that $\int
_{K_{\varepsilon}}^{\infty}dt\mu_{R}\left\{  \mathfrak{t}>t\right\}
\leq\varepsilon. $ Then, $\forall K>K_{\varepsilon},\xi\in\Sigma,$%
\begin{align}
\mu_{R}\left[  \mathfrak{t}\mathbf{1}_{B_{r}^{\left(  d-1\right)  }\left(
\xi\right)  }\right]   &  \leq\mu_{R}\left[  \left(  \mathfrak{t}\wedge
K\right)  \mathbf{1}_{B_{r}^{\left(  d-1\right)  }\left(  \xi\right)
}\right]  +\varepsilon\\
&  \leq K\mu_{R}\left[  \mathbf{1}_{B_{r}^{\left(  d-1\right)  }\left(
\xi\right)  }\right]  +\varepsilon\ .\nonumber
\end{align}
Moreover, if $\varphi>0,$ from (\ref{tK}) we get that $\mu_{R}\left[
\mathfrak{t}\varphi\right]  \geq\mu_{R}\left[  \left(  \mathfrak{t}\wedge
K\right)  \varphi\right]  .$
Let us denote 
\begin{equation}
\nu_R(d\xi):=\frac{\mu_R(d\xi)\mathfrak{t}(\xi)}{\mu_R[\mathfrak{t}]}    
\end{equation}
and 
\begin{equation}
\nu_{R}^{K}\left(  d\xi\right)  :=\frac{\mu_{R}\left(  d\xi\right)  \left(
\mathfrak{t}\left(  \xi\right)  \wedge K\right)  }{\mu_{R}\left[
\mathfrak{t}\wedge K\right]  }\ .
\end{equation}
Point 1 follows immediately from Proposition \ref{D_q} and the following
Lemma.\newline

\begin{lemma}
\label{Ld5} For $q\neq 1$, $$D_{q}\left(  \nu_{R}\right)  =D_{q}\left(  \mu_{R}\right).$$
\end{lemma}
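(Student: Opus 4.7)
The plan is to bound
\[
I_\nu(r):=\int \nu_R(B_r^{(d-1)}(\xi))^{q-1}\nu_R(d\xi)
\]
above and below in terms of
\[
I_\mu(r):=\int \mu_R(B_r^{(d-1)}(\xi))^{q-1}\mu_R(d\xi),
\]
so that the logarithmic slopes $\log I_\nu(r)/((q-1)\log r)$ and $\log I_\mu(r)/((q-1)\log r)$ have the same limit as $r\downarrow 0$. Using the density $d\nu_R/d\mu_R=\mathfrak{t}/\mu_R[\mathfrak{t}]$, I would first rewrite
\[
I_\nu(r)=\mu_R[\mathfrak{t}]^{-q}\int\mathfrak{t}(\xi)\,\mu_R[\mathfrak{t}\mathbf{1}_{B_r^{(d-1)}(\xi)}]^{q-1}\,\mu_R(d\xi),
\]
so that both quantities are expressed purely in terms of $\mu_R$-averages weighted by $\mathfrak{t}$.

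For the upper bound on $I_\nu$ in the case $q>1$, I would combine the truncation inequality $\mu_R[\mathfrak{t}\mathbf{1}_{B_r(\xi)}]\le K\mu_R(B_r(\xi))+\varepsilon$ (valid for all $K>K_\varepsilon$ and $\xi\in\Sigma$, proved just above the Lemma statement) with the elementary estimate $(a+b)^{q-1}\le C_q(a^{q-1}+b^{q-1})$. The outer weight $\mathfrak{t}(\xi)$ is then itself split as $(\mathfrak{t}\wedge K)+(\mathfrak{t}-K)^+$: the bounded part is absorbed into a constant multiple of $I_\mu(r)$, and the tail contribution is controlled by $\mu_R[(\mathfrak{t}-K)^+]\le\varepsilon$ using the trivial bound $\mu_R(B_r(\xi))^{q-1}\le 1$. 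This produces
\[
\mu_R[\mathfrak{t}]^q I_\nu(r)\le C_q\left(K^q I_\mu(r)+K^{q-1}\varepsilon+\varepsilon^{q-1}\mu_R[\mathfrak{t}]\right).
\]
The matching lower bound follows directly from the strict positivity of the Poincar\'{e} return time, $\mathfrak{t}\ge\kappa>0$, which gives $\nu_R\ge(\kappa/\mu_R[\mathfrak{t}])\mu_R$ as measures, and hence $I_\nu(r)\ge(\kappa/\mu_R[\mathfrak{t}])^q I_\mu(r)$.

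Dividing both estimates by $(q-1)\log r<0$, the lower bound yields $D_q^+(\nu_R)\le D_q^+(\mu_R)$ at once, since the multiplicative constant vanishes in the limit. The reverse inequality $D_q^-(\nu_R)\ge D_q^-(\mu_R)$ is extracted from the upper bound by a coupled limit: since by hypothesis $D_q(\mu_R)$ exists, $I_\mu(r)$ decays polynomially in $r$, so one can let $K=K(r)\to\infty$ slowly enough that $K^q$ remains sub-polynomial in $1/r$, with $\varepsilon=\varepsilon_K\to 0$ chosen so that both error terms $K^{q-1}\varepsilon$ and $\varepsilon^{q-1}$ are asymptotically negligible compared to $K^q I_\mu(r)$. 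Combined with the existence of $D_q(\mu_R)$, the two one-sided inequalities force $D_q(\nu_R)=D_q(\mu_R)$. The case $q<1$ is handled analogously, with the appropriate reversals of the power inequalities. The main obstacle is precisely this coupled $(r,K,\varepsilon)$ limit: the $L^1(\mu_R)$ integrability of $\mathfrak{t}$ ensures only that $\varepsilon_K=\int_K^\infty\mu_R\{\mathfrak{t}>t\}dt\to 0$ with no explicit rate, so one must verify the balancing can always be achieved, relying on the monotonicity of $\varepsilon_K$ in $K$ and the polynomial decay of $I_\mu(r)$.
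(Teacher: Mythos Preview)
Your skeleton mirrors the paper's: exploit $\mathfrak{t}\ge\kappa$ for one inequality and truncate at level $K$ for the other. The paper, rather than comparing $I_\nu$ directly to $I_\mu$ as you do, routes everything through the auxiliary probability $\nu_R^K(d\xi):=\mu_R(d\xi)(\mathfrak{t}\wedge K)/\mu_R[\mathfrak{t}\wedge K]$, for which $D_q(\nu_R^K)=D_q(\mu_R)$ is already known from the bounded-roof case, and then sandwiches $\int\nu_R(B_r)^{q-1}d\nu_R$ between $\int\nu_R^K(B_r)^{q-1}d\nu_R^K$ plus error terms. That is only a reorganisation of your scheme.

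The genuine gap is the one you flag but do not close. For $q>1$ your upper bound reads $I_\nu(r)\le C\bigl(K^qI_\mu(r)+K^{q-1}\varepsilon_K+\varepsilon_K^{q-1}\mu_R[\mathfrak{t}]\bigr)$, and you propose to send $K=K(r)\to\infty$ sub-polynomially in $1/r$ while keeping the error terms $o(K^qI_\mu(r))$. This balancing is \emph{not} achievable from $\mathfrak{t}\in L^1(\mu_R)$ alone: nothing forces even $K^{q-1}\varepsilon_K\to 0$. Take $\mu_R$ Lebesgue on $[0,1]$ and $\mathfrak{t}(\xi)=\xi^{-\alpha}$ with $\alpha\in(0,1)$; then $\mathfrak{t}\ge 1$, $\mathfrak{t}\in L^1$, and $\varepsilon_K\asymp K^{1-1/\alpha}$, so $K^{q-1}\varepsilon_K\asymp K^{q-1/\alpha}\to\infty$ for every $q>1/\alpha$. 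Worse, in this example $\nu_R$ has density proportional to $\xi^{-\alpha}$, and the paper's own Theorem~\ref{propp} gives $D_q(\nu_R)=q(1-\alpha)/(q-1)<1=D_q(\mu_R)$ for $q>1/\alpha$: the conclusion of the lemma itself fails, so no refinement of the coupled-limit argument can succeed without an additional tail hypothesis on $\mathfrak{t}$. Separately, your claim that $q<1$ is ``handled analogously'' is too quick: the one-sided bound $\nu_R\ge(\kappa/\mu_R[\mathfrak{t}])\mu_R$ does not by itself control $I_\nu$ by $I_\mu$ when $q-1<0$, since the inner and outer factors pull in opposite directions; the paper's $q<1$ argument is considerably longer and invokes, in particular, the existence of $D_{q-1}(\mu_R)$.
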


\begin{proof}\\

\noindent \underline{\bf Case $q>1$:} We have that
\begin{align}
\int_{\Sigma}\nu_{R}\left(  d\xi\right)  \nu_{R}^{q-1}\left(  B_{r}^{\left(
d-1\right)  }\left(  \xi\right)  \right)   &  =\frac{1}{\mu_{R}^{q}\left[
\emph{t}\right]  }\int_{\Sigma}\mu_{R}\left(  d\xi\right)  \mathfrak{t}\left(
\xi\right)  \mu_{R}^{q-1}\left[  \mathfrak{t}\mathbf{1}_{B_{r}^{\left(
d-1\right)  }\left(  \xi\right)  }\right] \label{lbq>1_i}\\
&  \geq\frac{\mu_{R}^{q}\left[  \mathfrak{t}\wedge K\right]  }{\mu_{R}%
^{q}\left[  \mathfrak{t}\right]  }\frac{\int_{\Sigma}\mu_{R}\left(
d\xi\right)  \left(  \mathfrak{t}\left(  \xi\right)  \wedge K\right)  \mu
_{R}^{q-1}\left[  \mathfrak{t}\mathbf{1}_{B_{r}^{\left(  d-1\right)  }\left(
\xi\right)  }\right]  }{\mu_{R}^{q}\left[  \mathfrak{t}\wedge K\right]
}\nonumber\\
&  \geq\frac{\mu_{R}^{q}\left[  \mathfrak{t}\wedge K\right]  }{\mu_{R}%
^{q}\left[  \mathfrak{t}\right]  }\frac{\int_{\Sigma}\mu_{R}\left(
d\xi\right)  \left(  \mathfrak{t}\left(  \xi\right)  \wedge K\right)  \mu
_{R}^{q-1}\left[  \left(  \mathfrak{t}\wedge K\right)  \mathbf{1}%
_{B_{r}^{\left(  d-1\right)  }\left(  \xi\right)  }\right]  }{\mu_{R}%
^{q}\left[  \mathfrak{t}\wedge K\right]  }\nonumber\\
&  =\frac{\mu_{R}^{q}\left[  \mathfrak{t}\wedge K\right]  }{\mu_{R}^{q}\left[
\mathfrak{t}\right]  }\int_{\Sigma}\nu_{R}^{K}\left(  d\xi\right)  \left(
\nu_{R}^{K}\left(  B_{r}^{\left(  d-1\right)  }\left(  \xi\right)  \right)
\right)  ^{q-1}\ .\nonumber
\end{align}
On the other hand, for any $\varepsilon>0,$%
\begin{equation}
\int_{\Sigma}\mu_{R}\left(  d\xi\right)  \mathfrak{t}\left(  \xi\right)
\nu_{R}^{q-1}\left[  \mathbf{1}_{B_{r}^{\left(  d-1\right)  }\left(
\xi\right)  }\right]  \leq\int_{\Sigma}\mu_{R}\left(  d\xi\right)  \left[
\left(  \mathfrak{t}\left(  \xi\right)  \wedge K\right)  \nu_{R}^{q-1}\left[
\mathbf{1}_{B_{r}^{\left(  d-1\right)  }\left(  \xi\right)  }\right]  \right]
+\varepsilon\ ,
\end{equation}
so that,
\begin{align}
\int_{\Sigma}\nu_{R}\left(  d\xi\right)  \nu_{R}^{q-1}\left(  B_{r}^{\left(
d-1\right)  }\left(  \xi\right)  \right)   &  \leq\frac{1}{\left(  \mu
_{R}\left[  \mathfrak{t}\wedge K\right]  \right)  ^{q}}\left\{  \int_{\Sigma
}\mu_{R}\left(  d\xi\right)  \left[  \left(  \mathfrak{t}\left(  \xi\right)
\wedge K\right)  \mu_{R}^{q-1}\left[  \mathfrak{t}\mathbf{1}_{B_{r}^{\left(
d-1\right)  }\left(  \xi\right)  }\right]  \right]  +\varepsilon\right\}
\label{ubq>1}\\
&  \leq\frac{\int_{\Sigma}\mu_{R}\left(  d\xi\right)  \left[  \left(
\mathfrak{t}\left(  \xi\right)  \wedge K\right)  \left(  \mu_{R}\left[
\left(  \mathfrak{t}\wedge K\right)  \mathbf{1}_{B_{r}^{\left(  d-1\right)
}\left(  \xi\right)  }\right]  +\varepsilon\right)  ^{q-1}\right]
+\varepsilon}{\left(  \mu_{R}\left[  \mathfrak{t}\wedge K\right]  \right)
^{q}}\ .\nonumber
\end{align}
Let $q\in(1,2].$ Since, for $a$ and $b>0,\left(  a+b\right)  ^{q-1}\leq
a^{q-1}+b^{q-1},$ we have
\begin{align}
\int_{\Sigma}\nu_{R}\left(  d\xi\right)  \nu_{R}^{q-1}\left(  B_{r}^{\left(
d-1\right)  }\left(  \xi\right)  \right)   &  \leq\int_{\Sigma}\nu_{R}%
^{K}\left(  d\xi\right)  \left(  \nu_{R}^{K}\left(  B_{r}^{\left(  d-1\right)
}\left(  \xi\right)  \right)  \right)  ^{q-1}+\label{ubq>1_1}\\
&  +\left(  \frac{\varepsilon}{\mu_{R}\left[  \mathfrak{t}\wedge K\right]
}\right)  ^{q-1}+\frac{\varepsilon}{\mu_{R}^{q}\left[  \mathfrak{t}\wedge
K\right]  }\nonumber\\
&  \leq\int_{\Sigma}\nu_{R}^{K}\left(  d\xi\right)  \left(  \nu_{R}^{K}\left(
B_{r}^{\left(  d-1\right)  }\left(  \xi\right)  \right)  \right)
^{q-1}+\nonumber\\
&  +\varepsilon^{q-1}\left(  \frac{K+\varepsilon^{2-q}}{\mu_{R}^{q}\left[
\mathfrak{t}\wedge K\right]  }\right)  \ .\nonumber
\end{align}
For $q>2,$ since
\begin{align}
\left(  \mu_{R}\left[  \left(  \mathfrak{t}\wedge K\right)  \mathbf{1}%
_{B_{r}^{\left(  d-1\right)  }\left(  \xi\right)  }\right]  +\varepsilon
\right)  ^{q-1}  &  =\mu_{R}^{q-1}\left[  \left(  \mathfrak{t}\wedge K\right)
\mathbf{1}_{B_{r}^{\left(  d-1\right)  }\left(  \xi\right)  }\right]  +\\
&  +\varepsilon\left(  q-1\right)  \int_{0}^{1}ds\left(  \mu_{R}\left[
\left(  \mathfrak{t}\wedge K\right)  \mathbf{1}_{B_{r}^{\left(  d-1\right)
}\left(  \xi\right)  }\right]  +s\varepsilon\right)  ^{q-2}\nonumber\\
&  \leq\mu_{R}^{q-1}\left[  \left(  \mathfrak{t}\wedge K\right)
\mathbf{1}_{B_{r}^{\left(  d-1\right)  }\left(  \xi\right)  }\right]
+\varepsilon\left(  q-1\right)  \left(  K+\varepsilon\right)  ^{q-2}%
\ ,\nonumber
\end{align}
from (\ref{ubq>1}) we get
\begin{align}
\int_{\Sigma}\nu_{R}\left(  d\xi\right)  \nu_{R}^{q-1}\left(  B_{r}^{\left(
d-1\right)  }\left(  \xi\right)  \right)   &  \leq\int_{\Sigma}\nu_{R}%
^{K}\left(  d\xi\right)  \left(  \nu_{R}^{K}\left(  B_{r}^{\left(  d-1\right)
}\left(  \xi\right)  \right)  \right)  ^{q-1}+\\
&  +\varepsilon\frac{\left(  q-1\right)  \mu_{R}\left[  \mathfrak{t}\wedge
K\right]  \left(  K+\varepsilon\right)  ^{q-2}+1}{\mu_{R}^{q}\left[
\mathfrak{t}\wedge K\right]  }\ .\nonumber
\end{align}
Then, setting
\begin{equation}
c_{q}\left(  K,\varepsilon\right)  :=\frac{\left(  q-1\right)  \mu_{R}\left[
\mathfrak{t}\wedge K\right]  \left(  K+\varepsilon\right)  ^{q-2}+1}{\mu
_{R}^{q}\left[  \mathfrak{t}\wedge K\right]  }\vee\frac{K+\varepsilon^{2-q}%
}{\mu_{R}^{q}\left[  \mathfrak{t}\wedge K\right]  }\ ,
\end{equation}
we have, for $r<1$
\begin{gather}
\frac{\log\int_{\Sigma}\nu_{R}\left(  d\xi\right)  \nu_{R}^{q-1}\left(
B_{r}^{\left(  d-1\right)  }\left(  \xi\right)  \right)  }{\log r} \leq
\frac{\log\int_{\Sigma}\nu_{R}^{K}\left(  d\xi\right)  \left(  \nu_{R}%
^{K}\left(  B_{r}^{\left(  d-1\right)  }\left(  \xi\right)  \right)  \right)
^{q-1}}{\log r}+\\
+\varepsilon^{\left(  q-1\right)  \wedge1}c_{q}\left(  K,\varepsilon\right)
\frac{\int_{0}^{1}ds\frac{1}{\int_{\Sigma}\nu_{R}^{K}\left(  d\xi\right)
\left(  \nu_{R}^{K}\left(  B_{r}^{\left(  d-1\right)  }\left(  \xi\right)
\right)  \right)  ^{q-1}+sc_{q}\left(  K,\varepsilon\right)  \varepsilon
^{\left(  q-1\right)  \wedge1}}}{\log r}\nonumber
\end{gather}
and by (\ref{lbq>1_i})
\begin{equation}
\frac{\log\int_{\Sigma}\nu_{R}\left(  d\xi\right)  \nu_{R}^{q-1}\left(
B_{r}^{\left(  d-1\right)  }\left(  \xi\right)  \right)  }{\log r}\geq
\frac{\log\int_{\Sigma}\nu_{R}^{K}\left(  d\xi\right)  \left(  \nu_{R}%
^{K}\left(  B_{r}^{\left(  d-1\right)  }\left(  \xi\right)  \right)  \right)
^{q-1}}{\log r}\ ,
\end{equation}
so that the result follows by taking the limit as $r\downarrow0$ in view of
Proposition \ref{D_q}.\\

\noindent \underline{\bf Case $q<1:$} For any $\varepsilon>0,$%
\begin{gather}
\int_{\Sigma}\nu_{R}\left(  d\xi\right)  \frac{1}{\nu_{R}^{1-q}\left(
B_{r}^{\left(  d-1\right)  }\left(  \xi\right)  \right)  }=\frac{1}{\mu
_{R}^{q}\left[  \mathfrak{t}\right]  }\int_{\Sigma}\mu_{R}\left(  d\xi\right)
\frac{\mathfrak{t}\left(  \xi\right)  }{\left(  \mu_{R}\left[  \mathfrak{t}%
\mathbf{1}_{B_{r}^{\left(  d-1\right)  }\left(  \xi\right)  }\right]  \right)
^{1-q}}\\
\geq\left(  \frac{\mu_{R}\left[  \mathfrak{t}\wedge K\right]  }{\mu_{R}\left[
\mathfrak{t}\right]  }\right)  ^{q}\frac{1}{\mu_{R}^{q}\left[  \mathfrak{t}%
\wedge K\right]  }\int_{\Sigma}\mu_{R}\left(  d\xi\right)  \frac
{\mathfrak{t}\left(  \xi\right)  }{\left(  \mu_{R}\left[  \left(
\mathfrak{t}\wedge K\right)  \mathbf{1}_{B_{r}^{\left(  d-1\right)  }\left(
\xi\right)  }\right]  +\varepsilon\right)  ^{1-q}}\nonumber\\
\geq\left(  \frac{\mu_{R}\left[  \mathfrak{t}\wedge K\right]  }{\mu_{R}\left[
\mathfrak{t}\right]  }\right)  ^{q}\frac{1}{\mu_{R}^{q}\left[  \mathfrak{t}%
\wedge K\right]  }\int_{\Sigma}\mu_{R}\left(  d\xi\right)  \frac
{\mathfrak{t}\left(  \xi\right)  \wedge K}{\left(  \mu_{R}\left[  \left(
\mathfrak{t}\wedge K\right)  \mathbf{1}_{B_{r}^{\left(  d-1\right)  }\left(
\xi\right)  }\right]  +\varepsilon\right)  ^{1-q}}\nonumber\\
\geq\left(  \frac{\mu_{R}\left[  \mathfrak{t}\wedge K\right]  }{\mu_{R}\left[
\mathfrak{t}\right]  }\right)  ^{q}\frac{1}{\mu_{R}^{q}\left[  \mathfrak{t}%
\wedge K\right]  }\int_{\Sigma}\mu_{R}\left(  d\xi\right)  \frac
{\mathfrak{t}\left(  \xi\right)  \wedge K}{\mu_{R}^{1-q}\left[  \left(
\mathfrak{t}\wedge K\right)  \mathbf{1}_{B_{r}^{\left(  d-1\right)  }\left(
\xi\right)  }\right]  }\times\nonumber\\
\times\left(  \frac{\mu_{R}\left[  \left(  \mathfrak{t}\wedge K\right)
\mathbf{1}_{B_{r}^{\left(  d-1\right)  }\left(  \xi\right)  }\right]  }%
{\mu_{R}\left[  \left(  \mathfrak{t}\wedge K\right)  \mathbf{1}_{B_{r}%
^{\left(  d-1\right)  }\left(  \xi\right)  }\right]  +\varepsilon}\right)
^{1-q}\nonumber\\
=\left(  \frac{\mu_{R}\left[  \mathfrak{t}\wedge K\right]  }{\mu_{R}\left[
\mathfrak{t}\right]  }\right)  ^{q}\int_{\Sigma}\nu_{R}^{K}\left(
d\xi\right)  \left(  \nu_{R}^{K}\left[  B_{r}^{\left(  d-1\right)  }\left(
\xi\right)  \right]  \right)  ^{q-1}\times\nonumber\\
\times\left(  1-\frac{\varepsilon}{\mu_{R}\left[  \left(  \mathfrak{t}\wedge
K\right)  \mathbf{1}_{B_{r}^{\left(  d-1\right)  }\left(  \xi\right)
}\right]  +\varepsilon}\right)  ^{1-q}\ .\nonumber
\end{gather}
But, since there exists $\kappa>0$ such that $\mathfrak{t}>\kappa,$
\begin{gather}
\left(  1-\frac{\varepsilon}{\mu_{R}\left[  \left(  \mathfrak{t}\wedge
K\right)  \mathbf{1}_{B_{r}^{\left(  d-1\right)  }\left(  \xi\right)
}\right]  +\varepsilon}\right)  ^{1-q}=\left(  1-\frac{\varepsilon}{\mu
_{R}\left[  \left(  \mathfrak{t}\wedge K\right)  \mathbf{1}_{B_{r}^{\left(
d-1\right)  }\left(  \xi\right)  }\right]  \nu_{R}^{K}\left[  \mathbf{1}%
_{B_{r}^{\left(  d-1\right)  }\left(  \xi\right)  }\right]  +\varepsilon
}\right)  ^{1-q}\\
\geq\left(  1-\frac{\varepsilon}{\kappa\nu_{R}^{K}\left[  \mathbf{1}%
_{B_{r}^{\left(  d-1\right)  }\left(  \xi\right)  }\right]  +\varepsilon
}\right)  ^{1-q}=1-\varepsilon\left(  1-q\right)  \int_{0}^{1}ds\frac{\left(
\kappa\nu_{R}^{K}\left[  \mathbf{1}_{B_{r}^{\left(  d-1\right)  }\left(
\xi\right)  }\right]  \right)  ^{1-q}}{\left(  \kappa\nu_{R}^{K}\left[
\mathbf{1}_{B_{r}^{\left(  d-1\right)  }\left(  \xi\right)  }\right]
+s\varepsilon\right)  ^{2-q}}\nonumber\\
\geq1-\frac{\varepsilon\left(  1-q\right)  }{\kappa\nu_{R}^{K}\left(
B_{r}^{\left(  d-1\right)  }\left(  \xi\right)  \right)  }\ .\nonumber
\end{gather}
Hence,
\begin{align}
\int_{\Sigma}\nu_{R}\left(  d\xi\right)  \frac{1}{\nu_{R}^{1-q}\left(
B_{r}^{\left(  d-1\right)  }\left(  \xi\right)  \right)  }  &  \geq\left(
\frac{\mu_{R}\left[  \mathfrak{t}\wedge K\right]  }{\mu_{R}\left[
\mathfrak{t}\right]  }\right)  ^{q}\int_{\Sigma}\nu_{R}^{K}\left(
d\xi\right)  \left(  \nu_{R}^{K}\left[  B_{r}^{\left(  d-1\right)  }\left(
\xi\right)  \right]  \right)  ^{q-1}\times\label{lbq<1}\\
&  \times\left(  1-\frac{\varepsilon\left(  1-q\right)  }{\kappa\nu_{R}%
^{K}\left(  B_{r}^{\left(  d-1\right)  }\left(  \xi\right)  \right)  }\right)
\nonumber\\
&  =\left(  \frac{\mu_{R}\left[  \mathfrak{t}\wedge K\right]  }{\mu_{R}\left[
\mathfrak{t}\right]  }\right)  ^{q}\left\{  \int_{\Sigma}\nu_{R}^{K}\left(
d\xi\right)  \left(  \nu_{R}^{K}\left[  B_{r}^{\left(  d-1\right)  }\left(
\xi\right)  \right]  \right)  ^{q-1}+\right. \nonumber\\
&  \left.  -\varepsilon\frac{\left(  1-q\right)  }{\kappa}\int_{\Sigma}\nu
_{R}^{K}\left(  d\xi\right)  \left(  \nu_{R}^{K}\left[  B_{r}^{\left(
d-1\right)  }\left(  \xi\right)  \right]  \right)  ^{q-2}\right\} \nonumber\\
&  \geq\left(  \frac{\mu_{R}\left[  \mathfrak{t}\wedge K\right]  }{\mu
_{R}\left[  \mathfrak{t}\right]  }\right)  ^{q}\int_{\Sigma}\nu_{R}^{K}\left(
d\xi\right)  \left(  \nu_{R}^{K}\left[  B_{r}^{\left(  d-1\right)  }\left(
\xi\right)  \right]  \right)  ^{q-1}\times\nonumber\\
&  \times\left[  1-\varepsilon\frac{\left(  1-q\right)  }{\kappa}\frac
{\int_{\Sigma}\nu_{R}^{K}\left(  d\xi\right)  \left(  \nu_{R}^{K}\left[
B_{r}^{\left(  d-1\right)  }\left(  \xi\right)  \right]  \right)  ^{q-2}}%
{\int_{\Sigma}\nu_{R}^{K}\left(  d\xi\right)  \left(  \nu_{R}^{K}\left[
B_{r}^{\left(  d-1\right)  }\left(  \xi\right)  \right]  \right)  ^{q-1}%
}\right]  \ ,\nonumber
\end{align}
so that, for $q<1$ and for any $\varepsilon>0,$%
\begin{gather}
\frac{\log\int_{\Sigma}\nu_{R}\left(  d\xi\right)  \nu_{R}^{q-1}\left(
B_{r}^{\left(  d-1\right)  }\left(  \xi\right)  \right)  }{\log r}\geq
\frac{\log\int_{\Sigma}\nu_{R}^{K}\left(  d\xi\right)  \left(  \nu_{R}%
^{K}\left(  B_{r}^{\left(  d-1\right)  }\left(  \xi\right)  \right)  \right)
^{q-1}}{\log r}+\\
+\frac{\log\left[  1-\varepsilon\frac{\left(  1-q\right)  }{\kappa}\frac
{\int_{\Sigma}\nu_{R}^{K}\left(  d\xi\right)  \left(  \nu_{R}^{K}\left[
B_{r}^{\left(  d-1\right)  }\left(  \xi\right)  \right]  \right)  ^{q-2}}%
{\int_{\Sigma}\nu_{R}^{K}\left(  d\xi\right)  \left(  \nu_{R}^{K}\left[
B_{r}^{\left(  d-1\right)  }\left(  \xi\right)  \right]  \right)  ^{q-1}%
}\right]  }{\log r}=\nonumber\\
\frac{\log\int_{\Sigma}\nu_{R}^{K}\left(  d\xi\right)  \left(  \nu_{R}%
^{K}\left(  B_{r}^{\left(  d-1\right)  }\left(  \xi\right)  \right)  \right)
^{q-1}}{\log r}+\nonumber\\
-\varepsilon\frac{\left(  1-q\right)  }{\kappa}\frac{\int_{0}^{1}ds\left[
\frac{\int_{\Sigma}\nu_{R}^{K}\left(  d\xi\right)  \left(  \nu_{R}^{K}\left[
B_{r}^{\left(  d-1\right)  }\left(  \xi\right)  \right]  \right)  ^{q-1}}%
{\int_{\Sigma}\nu_{R}^{K}\left(  d\xi\right)  \left(  \nu_{R}^{K}\left[
B_{r}^{\left(  d-1\right)  }\left(  \xi\right)  \right]  \right)  ^{q-2}%
}-s\varepsilon\frac{\left(  1-q\right)  }{\kappa}\right]  ^{-1}}{\log r}%
\geq\nonumber\\
\frac{\log\int_{\Sigma}\nu_{R}^{K}\left(  d\xi\right)  \left(  \nu_{R}%
^{K}\left(  B_{r}^{\left(  d-1\right)  }\left(  \xi\right)  \right)  \right)
^{q-1}}{\log r}+\nonumber\\
-\varepsilon\frac{\left(  1-q\right)  }{\kappa}\frac{\left[  \frac
{\int_{\Sigma}\nu_{R}^{K}\left(  d\xi\right)  \left(  \nu_{R}^{K}\left[
B_{r}^{\left(  d-1\right)  }\left(  \xi\right)  \right]  \right)  ^{q-1}}%
{\int_{\Sigma}\nu_{R}^{K}\left(  d\xi\right)  \left(  \nu_{R}^{K}\left[
B_{r}^{\left(  d-1\right)  }\left(  \xi\right)  \right]  \right)  ^{q-2}%
}-\varepsilon\frac{\left(  1-q\right)  }{\kappa}\right]  ^{-1}}{\log
r}\ .\nonumber
\end{gather}
Now, for any $q\neq1,$ there exists $C_{q}>0$ such that
\begin{equation}
C_{q}^{-1}r^{\left(  q-1\right)  D_{q}\left(  \mu_{R}\right)  }\leq
\int_{\Sigma}\nu_{R}^{K}\left(  d\xi\right)  \left(  \nu_{R}^{K}\left[
B_{r}^{\left(  d-1\right)  }\left(  \xi\right)  \right]  \right)  ^{q-1}\leq
C_{q}r^{\left(  q-1\right)  D_{q}\left(  \mu_{R}\right)  }\ .
\end{equation}
Therefore,
\begin{equation}
r^{\alpha_{q}}C_{q-1}^{-1}C_{q}^{-1}\leq\frac{\int_{\Sigma}\nu_{R}^{K}\left(
d\xi\right)  \left(  \nu_{R}^{K}\left[  B_{r}^{\left(  d-1\right)  }\left(
\xi\right)  \right]  \right)  ^{q-1}}{\int_{\Sigma}\nu_{R}^{K}\left(
d\xi\right)  \left(  \nu_{R}^{K}\left[  B_{r}^{\left(  d-1\right)  }\left(
\xi\right)  \right]  \right)  ^{q-2}}\leq C_{q-1}C_{q}r^{\alpha_{q}}\ ,
\end{equation}
where, since $q<1,$
\begin{align}
\alpha_{q}  &  :=\left(  q-1\right)  D_{q}\left(  \mu_{R}\right)  -\left(
q-2\right)  D_{q-1}\left(  \mu_{R}\right) \\
&  =\left(  2-q\right)  D_{q-1}\left(  \mu_{R}\right)  -\left(  1-q\right)
D_{q}\left(  \mu_{R}\right) \nonumber\\
&  =D_{q-1}\left(  \mu_{R}\right)  +\left(  1-q\right)  \left(  D_{q-1}\left(
\mu_{R}\right)  -D_{q}\left(  \mu_{R}\right)  \right)  >0\ .\nonumber
\end{align}
Taking the the limit $r\downarrow0,$ we get that, for
$q<1,D_{q}\left(  \nu_{R}\right)  \geq D_{q}\left(  \mu_{R}\right)  .$
Moreover,
\begin{align}
\int_{\Sigma}\nu_{R}\left(  d\xi\right)  \frac{1}{\nu_{R}^{1-q}\left(
B_{r}^{\left(  d-1\right)  }\left(  \xi\right)  \right)  }  &  =\left(
\frac{\mu_{R}\left[  \mathfrak{t}\wedge K\right]  }{\mu_{R}\left[
\mathfrak{t}\right]  }\right)  ^{q}\frac{1}{\mu_{R}^{q}\left[  \mathfrak{t}%
\wedge K\right]  }\int_{\Sigma}\mu_{R}\left(  d\xi\right)  \frac
{\mathfrak{t}\left(  \xi\right)  \wedge K}{\left(  \mu_{R}\left[  \left(
\mathfrak{t}\wedge K\right)  \mathbf{1}_{B_{r}^{\left(  d-1\right)  }\left(
\xi\right)  }\right]  \right)  ^{1-q}}\times\\
&  \times\left(  \frac{\mu_{R}\left[  \left(  \mathfrak{t}\wedge K\right)
\mathbf{1}_{B_{r}^{\left(  d-1\right)  }\left(  \xi\right)  }\right]  }%
{\mu_{R}\left[  \mathfrak{t}\mathbf{1}_{B_{r}^{\left(  d-1\right)  }\left(
\xi\right)  }\right]  }\right)  ^{1-q}\frac{\mathfrak{t}\left(  \xi\right)
}{\mathfrak{t}\left(  \xi\right)  \wedge K}\nonumber\\
&  \leq\left(  \frac{\mu_{R}\left[  \mathfrak{t}\wedge K\right]  }{\mu
_{R}\left[  \mathfrak{t}\right]  }\right)  ^{q}\int_{\Sigma}\nu_{R}^{K}\left(
d\xi\right)  \frac{1}{\left(  \nu_{R}^{K}\left(  B_{r}^{\left(  d-1\right)
}\left(  \xi\right)  \right)  \right)  ^{1-q}}\times\nonumber\\
&  \times\left(  1+\frac{\int_{\Sigma}\nu_{R}^{K}\left(  d\xi\right)  \left(
\nu_{R}^{K}\left(  B_{r}^{\left(  d-1\right)  }\left(  \xi\right)  \right)
\right)  ^{q-1}\frac{\mathfrak{t}\left(  \xi\right)  }{K}\mathbf{1}%
_{(K,\infty]}\left(  \mathfrak{t}\left(  \xi\right)  \right)  }{\int_{\Sigma
}\nu_{R}^{K}\left(  d\xi\right)  \left(  \nu_{R}^{K}\left(  B_{r}^{\left(
d-1\right)  }\left(  \xi\right)  \right)  \right)  ^{q-1}}\right)
\ .\nonumber
\end{align}
But, since there exists $\kappa>0$ such that $\mathfrak{t}>\kappa,$
\begin{align}
&  \frac{\int_{\Sigma}\nu_{R}^{K}\left(  d\xi\right)  \left(  \nu_{R}%
^{K}\left(  B_{r}^{\left(  d-1\right)  }\left(  \xi\right)  \right)  \right)
^{q-1}\frac{\mathfrak{t}\left(  \xi\right)  }{K}\mathbf{1}_{(K,\infty]}\left(
\mathfrak{t}\left(  \xi\right)  \right)  }{\int_{\Sigma}\nu_{R}^{K}\left(
d\xi\right)  \left(  \nu_{R}^{K}\left(  B_{r}^{\left(  d-1\right)  }\left(
\xi\right)  \right)  \right)  ^{q-1}}\\
&  \leq\left(  \frac{K}{\kappa}\right)  ^{q}\frac{\int_{\Sigma}\mu_{R}\left(
d\xi\right)  \left(  \mu_{R}\left(  B_{r}^{d-1}\left(  \xi\right)  \right)
\right)  ^{q-1}\frac{\mathfrak{t}\left(  \xi\right)  }{K}\mathbf{1}%
_{(K,\infty]}\left(  \mathfrak{t}\left(  \xi\right)  \right)  }{\int_{\Sigma
}\mu_{R}\left(  d\xi\right)  \mu_{R}\left(  B_{r}^{d-1}\left(  \xi\right)
\right)  ^{q-1}}\nonumber\\
&  =\frac{K^{q-1}}{\kappa^{q}}\frac{\int_{\Sigma}\mu_{R}\left(  d\xi\right)
\left(  \mu_{R}\left(  B_{r}^{d-1}\left(  \xi\right)  \right)  \right)
^{q-1}\mathfrak{t}\left(  \xi\right)  \mathbf{1}_{(K,\infty]}\left(
\mathfrak{t}\left(  \xi\right)  \right)  }{\int_{\Sigma}\mu_{R}\left(
d\xi\right)  \mu_{R}\left(  B_{r}^{d-1}\left(  \xi\right)  \right)  ^{q-1}%
}\nonumber
\end{align}
which implies
\begin{gather}
\int_{\Sigma}\nu_{R}\left(  d\xi\right)  \nu_{R}^{1-q}\left(  B_{r}^{\left(
d-1\right)  }\left(  \xi\right)  \right)  \leq\left(  \frac{\mu_{R}\left[
\mathfrak{t}\wedge K\right]  }{\mu_{R}\left[  \mathfrak{t}\right]  }\right)
^{q}\int_{\Sigma}\nu_{R}^{K}\left(  d\xi\right)  \frac{1}{\left(  \nu_{R}%
^{K}\left(  B_{r}^{\left(  d-1\right)  }\left(  \xi\right)  \right)  \right)
^{1-q}}\times\label{ubq<1_i}\\
\times\left(  1+\frac{K^{q-1}}{\kappa^{q}}\frac{\int_{\Sigma}\mu_{R}\left(
d\xi\right)  \left(  \mu_{R}\left(  B_{r}^{d-1}\left(  \xi\right)  \right)
\right)  ^{q-1}\mathfrak{t}\left(  \xi\right)  \mathbf{1}_{(K,\infty]}\left(
\mathfrak{t}\left(  \xi\right)  \right)  }{\int_{\Sigma}\mu_{R}\left(
d\xi\right)  \mu_{R}\left(  B_{r}^{d-1}\left(  \xi\right)  \right)  ^{q-1}%
}\right)  \ .\nonumber
\end{gather}
Hence, by (\ref{ubq<1_i}) we obtain
\begin{gather}
\frac{\log\int_{\Sigma}\nu_{R}\left(  d\xi\right)  \nu_{R}^{q-1}\left(
B_{r}^{\left(  d-1\right)  }\left(  \xi\right)  \right)  }{\log r}\leq
\frac{\log\int_{\Sigma}\nu_{R}^{K}\left(  d\xi\right)  \left(  \nu_{R}%
^{K}\left[  B_{r}^{\left(  d-1\right)  }\left(  \xi\right)  \right]  \right)
^{q-1}}{\log r}+\\
+q\frac{\log\frac{\mu_{R}\left[  \mathfrak{t}\wedge K\right]  }{\mu_{R}\left[
\mathfrak{t}\right]  }}{\log r}+\frac{\log\left(  1+\frac{K^{q-1}}{\kappa^{q}%
}\frac{\int_{\Sigma}\mu_{R}\left(  d\xi\right)  \left(  \mu_{R}\left(
B_{r}^{d-1}\left(  \xi\right)  \right)  \right)  ^{q-1}\mathfrak{t}\left(
\xi\right)  \mathbf{1}_{(K,\infty]}\left(  \mathfrak{t}\left(  \xi\right)
\right)  }{\int_{\Sigma}\mu_{R}\left(  d\xi\right)  \mu_{R}\left(  B_{r}%
^{d-1}\left(  \xi\right)  \right)  ^{q-1}}\right)  }{\log r}\nonumber
\end{gather}
which implies the thesis.
\end{proof}\\

\subsection*{Proof of point 2}

Proceeding as in Lemma \ref{B-C}, we can find a neighborhood $\mathcal{U}_{0}=%
\mathcal{U}_{0}^{+}\cup \mathcal{U}_{0}^{-}\subset \mathcal{U}$ of $\Sigma $
with $\mathcal{U}_{0}^{+}\cap \mathcal{U}_{0}^{-}=\Sigma ,$ such that, for
any $\left( \xi ,t\right) \in \Sigma _{\mathfrak{t}}$ with the property that 
$\Theta \left( \xi ,t\right) \in \mathcal{U}\backslash \mathcal{U}_{0},$ 
there exists $c>1$ such that, for $r$ sufficiently small,

\begin{equation}
\Theta (B_{c^{-1}r}^{(d-1)}(\xi )\times B_{c^{-1}r}^{(1)}(t)))\subset
B_{r}^{(d)}(\Theta \left( \xi ,t\right) )\subset \Theta (B_{cr}^{(d-1)}(\xi
)\times B_{cr}^{(1)}(t))
\end{equation}%
with 
\begin{equation}
B_{cr}^{(d-1)}(\xi )\times B_{cr}^{(1)}(t)\subset \mathring{\Sigma}_{%
\mathfrak{t}}\ .
\end{equation}%
Therefore,

\begin{equation}
\mu _{S}(B_{c^{-1}r}^{(d-1)}(\xi )\times B_{c^{-1}r}^{(1)}(t)))\leq \mu
(B_{r}^{(d)}\Theta (\xi ,t))\leq \mu _{S}(B_{cr}^{(d-1)}(\xi )\times
B_{cr}^{(1)}(t))\ .  \label{bc1}
\end{equation}%
On the other hand, if $\Theta \left( \xi ,t\right) \in \mathcal{U}_{0}^{+},$
for $s\in \left( \frac{\kappa }{4},\frac{\kappa }{2}\right) ,$ where $\kappa
>0$ is such that $\mathfrak{t}>\kappa ,$ there exists $c>1$ such that, by (%
\ref{B-C}), 
\begin{equation}
\mu _{S}\left( B_{c^{-1}r}^{\left( d-1\right) }\left( \xi \right) \times
B_{c^{-1}r}^{\left( 1\right) }\left( t+s\right) \right) \leq \mu \left( \chi
^{s}\left( B_{r}^{\left( d\right) }\left( \Theta \left( \xi ,t\right)
\right) \right) \right) \leq \mu _{S}\left( B_{cr}^{\left( d-1\right)
}\left( \xi \right) \times B_{cr}^{\left( 1\right) }\left( t+s\right)
\right) \ ,  \label{bc2}
\end{equation}%
with $B_{cr}^{\left( d-1\right) }\left( \xi \right) \times B_{cr}^{\left(
1\right) }\left( t+s\right) \subset \mathring{\Sigma}_{\mathfrak{t}},$ while
if $\Theta \left( \xi ,t\right) \in \mathcal{U}_{0}^{-},$%
\begin{equation}
\mu _{S}\left( B_{c^{-1}r}^{\left( d-1\right) }\left( \xi \right) \times
B_{c^{-1}r}^{\left( 1\right) }\left( t-s\right) \right) \leq \mu \left( \chi
^{-s}\left( B_{r}^{\left( d\right) }\left( \Theta \left( \xi ,t\right)
\right) \right) \right) \leq \mu _{S}\left( B_{cr}^{\left( d-1\right)
}\left( \xi \right) \times B_{cr}^{\left( 1\right) }\left( t-s\right)
\right) \ ,  \label{bc3}
\end{equation}%
with $B_{cr}^{\left( d-1\right) }\left( \xi \right) \times B_{cr}^{\left(
1\right) }\left( t-s\right) \subset \mathring{\Sigma}_{\mathfrak{t}}.$ Since 
$\mu $ is invariant for the flow $\left( \chi ^{t},t\in \mathbb{R}\right) $
and $\mu _{R}$ is preserved by $R,$ taking the $log$ of all the members of (%
\ref{bc1}), as well as of both sides of (\ref{bc2}) and (\ref{bc3}), and
dividing by $\log r,$ in the limit as $r\downarrow 0,$ we get 
\begin{equation}
d_{\mu }\left( \Theta \left( \xi ,t\right) \right) =d_{\mu _{R}}\left( \xi
\right) +1
\end{equation}%
and the result follows by setting $x:=\Theta \left( \xi ,t\right) .$

\subsection*{Proof of point 3}

As a consequence point 2, for $\mu$-almost all $x\in \mathcal{U} $, the function
\begin{equation}
\mathbb{R}^{d} \ni x \longmapsto \rho_r(x):=\frac{\log\mu(B_{r}^{(d)}(x))}{\log r} \in \mathbb{R}%
\end{equation}
converges, as $r\downarrow0,$ to $d_{\mu_{R}}(\pi_{1}\circ\Theta^{-1}(x))+1$, and $d_{\mu_{R}}(\xi)=D_{1}(\mu_{R})$ for $\mu_{R}$-almost every $\xi\in\Sigma$, since $\mu_{R}$ is exact-dimensional. Therefore, for $\mu$-almost every $x\in\mathcal{U}$, $|\rho_r(x)|$ is bounded. Thus, by the dominated convergence theorem, we get
\begin{align}
D_{1}(\mu)  &  =\lim_{r\downarrow0}\int   \rho_r(x) d\mu\left(  x\right)\\
&=\int  (\lim_{r\downarrow0}\rho_r(x)) d\mu\left(  x\right) \nonumber \\ 
&  =\int  (d_{\mu_{R}}(\pi_{2}\circ\Theta^{-1}(x))+1) d\mu\left(  x\right) \nonumber \\
&=D_{1}(\mu_{R})+1\ .\nonumber
\end{align}

\section{The R\"{o}ssler system}

Introduced by Otto R\"ossler in 1976 \cite{Ros}, this system is a well-known example of deterministic chaotic flow and has been widely studied in the physics literature on non-linear dynamics. The chaotic behavior of this system was proven in \cite{Zg}, although no proof has been put forward concerning the existence of a physical measure for this system. Yet, many publications have proposed numerical estimates for its generalized dimensions spectrum (see e.g. \cite{L2,SR}). In this section, we construct a suspension flow resembling the R\"ossler flow and compute explicitly its generalized dimension spectrum, based on our main Theorem. The equation of motion of the original R\"ossler flow is the following:
\begin{equation}
\mathbf{\dot{x}}=f_{c}\left(  \mathbf{x}\right) ,\label{LRoss}%
\end{equation}
where $\mathbf{x}=\left(  x,y,z\right)  \in\mathbb{R}^{3}$ and
\begin{equation}
f_{c}\left(  \mathbf{x}\right)  :=\left(
\begin{array}
[c]{c}%
-y-z\\
x+ay\\
b+z\left(  x-c\right)
\end{array}
\right)  \;,\;c>0. \label{Rfield}%
\end{equation}

Assuming that the parameters $a,b,c$ are chosen in such a way that the R\"{o}ssler's
system (\ref{LRoss}) exhibits a chaotic behaviour (see e.g. \cite{Zg}), the
associated flow can be described in terms of a suspension flow. Namely, following \cite{LDM}, we can shift the axes
origin to the unstable fixed point
\begin{equation}
p^-=(\varepsilon, -\varepsilon/2, \varepsilon/2),
\end{equation}
where $\varepsilon:=\frac{c-\sqrt{c^2-4ab}}2$, yielding the phase velocity field
\begin{equation}\label{field}
\bar{f}_c\left(  \mathbf{\bar{x}}\right):=\left\{
\begin{array}
[c]{l}%
-\bar{y}-\bar{z}\\
\bar{x}+a\bar{y}\\
\frac{\varepsilon}{a}\bar{x}+\bar{z}(  \bar{x}-c+\varepsilon)
\end{array},
\right.  
\end{equation}
where
\begin{align}
&  \left\{
\begin{array}
[c]{l}%
\bar{x}:=x-\varepsilon\\
\bar{y}:=y+\frac{\varepsilon}{a}\\
\bar{z}:=z-\frac{\varepsilon}{a}%
\end{array}
\right.  \ .
\end{align}

In cases when the R\"ossler attractor exhibits a chaotic behavior, $c$ is usually large with respect to $a$ and $b$ \cite{LDM}, so that $\varepsilon$ is very small. Therefore we can consider the velocity field

\begin{equation}\label{field2}
\tilde{f}_c\left(  \mathbf{\bar{x}}\right):=\left\{
\begin{array}
[c]{l}%
-\bar{y}-\bar{z}\\
\bar{x}+a\bar{y}\\
\bar{z}(\bar{x}-c)
\end{array}
\right.  
\end{equation}

\noindent as a small perturbation of $\bar{f}_c$, whose flow, as it clearly appears from Fig. \ref{roop}, also displays a chaotic
attractor $\mathcal{A}$ having similar features as the R\"{o}ssler's. Since, as it will become clear in an instant, $\tilde{f}_c$ is particularly convenient for an analytical treatment, we will from now on use this velocity field and assume that the system (\ref{field}) is sufficiently robust for the two attractors to have similar topological properties\footnote{To our
knowledge there are no rigorous results about the robustness of the R\"{o}ssler's
flow, although it appears from simulation \cite{Sp} that if the values of the
parameters are chosen away from the bifurcation points the R\"ossler system
is indeed robust.}. In the following, to ease the notation, we will set $\bar
{x}=x,\bar{y}=y$ and $\bar{z}=z$.
Let then $R:\Sigma\circlearrowleft$ be the first return map relative to the
cross-section
\begin{equation}
\Sigma:=\left\{  \left(  x,y,z\right)  \in\mathbb{R}^{3}:x=0,\dot
{x}>0\right\}  \ .
\end{equation}
We define the box:
\begin{equation}
B=\{(x,y,z)\in\mathbb{R}^{3},X\leq x\leq0,|y|\leq Y,|z|\leq Z\}\ ,
\end{equation}
where $X,Y,Z>0$ are such that
\begin{equation}
\{x\geq0\}\cap\mathcal{A}\subset B\ .
\end{equation}
See Fig. \ref{roop} for a graphical representation. Let $(x_{0},y_{0}%
,z_{0})\in B\cap\{x=0,y>0\}$. Whenever the flow is in $B$, since $x\leq0$,
using the Gr\"onwall lemma on the third component of $\tilde{f}_c$, we get
\begin{equation}
|z(t,x_{0},y_{0},z_{0}|)|\leq Ze^{-ct}\ ,
\end{equation}
for all $t$ such that $z(t,x_{0},y_{0},z_{0})\in B$. Let
\begin{equation}
t_{0}:=\inf\{t>0:\mathbf{x}(t_{0},0,y_{0},z_{0})\in\Sigma\}\ .
\end{equation}
We have that
\begin{equation}
t_{0}>\frac{\pi L}{\sup|E|}\geq\frac{\pi L}{\sqrt{(Y+Z)^{2}+(X+aY)^{2}%
+Z^{2}(X-c)^{2}}}\ ,
\end{equation}
\label{t0} 
where $L$ is the radius of the largest ball $B_{r}(0)$ such that
$B_{r}(0)\cap\mathcal{A}=\emptyset$ (its existence is insured by the
Hartman-Grobman Theorem). Numerical simulations performed for different values
of $c$ show that $L$ is slightly larger than $c/2$, while $X,Y<2c$ and $Z$ is
very small compared to $X$ and $Y$. This gives $t_{0}\leq1/2$, which implies
\begin{equation}
|z(t_{0},x_{0},y_{0},z_{0})|\leq Ze^{-c/2}\ .
\end{equation}
Note that, as suggested by Fig. \ref{roop}, $Z$ can be taken very small.
Indeed, the flow already undergoes a strong contraction for $x>c$. For the
parameters $c=18$, $a=b=0.1$, $Z$ can be taken of order $10^{-2}$. The
Poincar\'{e} map $R$ is then of the form
\begin{equation}
\Sigma\ni(y,z)\longmapsto R(y,z)=(R_{1}(y,z),R_{2}\left( y,z\right) )\in\Sigma\ .
\end{equation}
with $\left \vert R_{2}\left( y,z\right) \right \vert = O(e^{-c/2}).$ 
In the right-hand side of Fig. \ref{qqqq}, we plot the intersection of the
numerical attractor computed using the RK4 method at different discretization
steps $h$ with the Poincar\'{e} section $\Sigma$, using trajectories of sizes
$t=10^{4}/h$. Since the RK4 method has an accumulated error of order $h^{4}$,
the errors in the numerical computation of the trajectories of the flow
$\tilde{X}^t$ are of order $10^{4}h^{4}$, as
observed in Fig. \ref{qqqq}. Moreover, for all tested $h$, the intersection of
the numerically obtained attractor with $\Sigma$ appears to be embedded in a
one-dimensional curve that can be parameterized by $z=\varphi
_{h}(y)$. Consequently, the return
map $\hat{R}$ observed numerically is of the form
\begin{equation}
\Sigma\ni(y,z)\longmapsto\hat{R}(y,z)=(\mathbf{T}(y),\varphi_{h}%
(y))\in\Sigma\ ,
\end{equation}
with $|\varphi_{h}(y)|=O(h)$ and $\mathbf{T}$ is a unimodal map, as seen in Fig. 1 in \cite{LDM}, and the left-hand side of Fig.
\ref{qqqq}.
\begin{figure}[ptb]
\centering
\begin{minipage}[b]{1\textwidth}
\includegraphics[width=\textwidth]{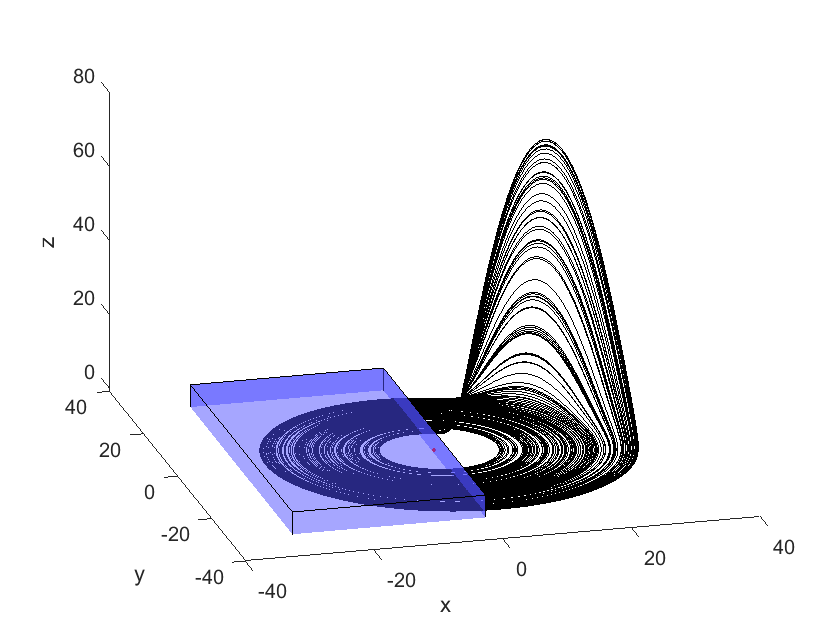}
\end{minipage}
\hfill\begin{minipage}[b]{1\textwidth}
\includegraphics[width=\textwidth]{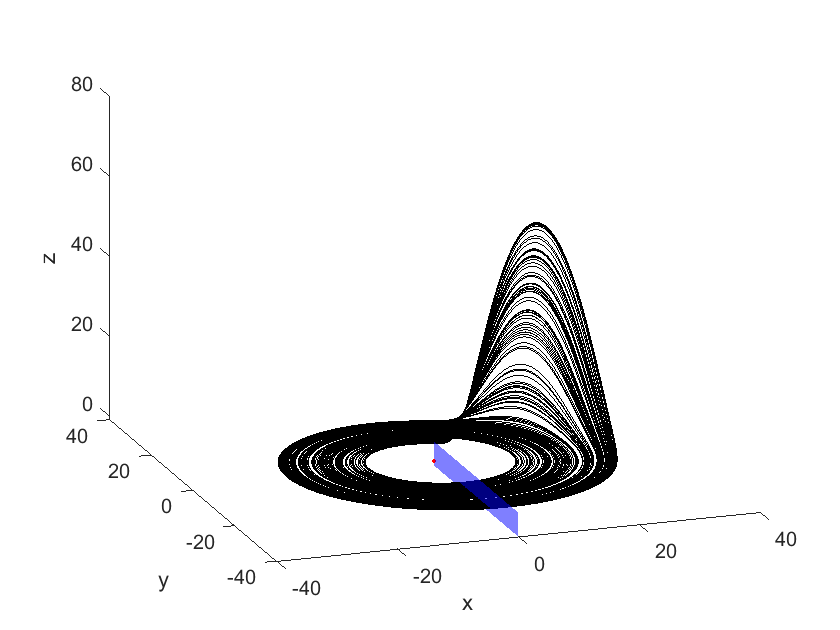}
\end{minipage}
\caption{Up: Attractor of the velocity field $\tilde{f}_c$. The box $B$ is represented
in blue. Down: Attractor of the flow generated by $\bar{f}_c$. The Poincar\'{e}
section $\Sigma$ is represented in blue. In both cases, we took with $a=b=0.1
$ and $c=18$}%
\label{roop}%
\end{figure}
\begin{figure}[ptb]
\centering
\begin{minipage}[b]{0.45\textwidth}
\includegraphics[width=\textwidth]{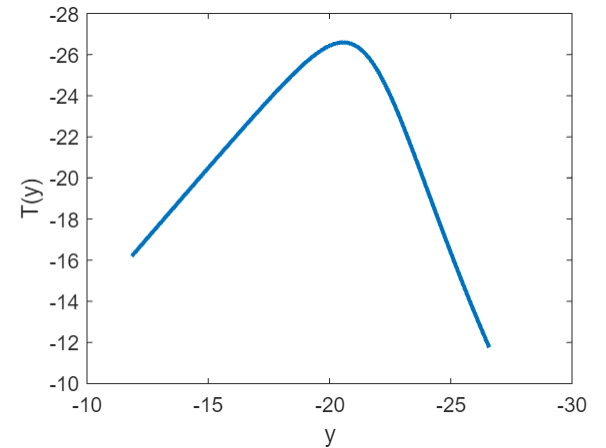}
\end{minipage}
\hfill\begin{minipage}[b]{0.45\textwidth}
\includegraphics[width=\textwidth]{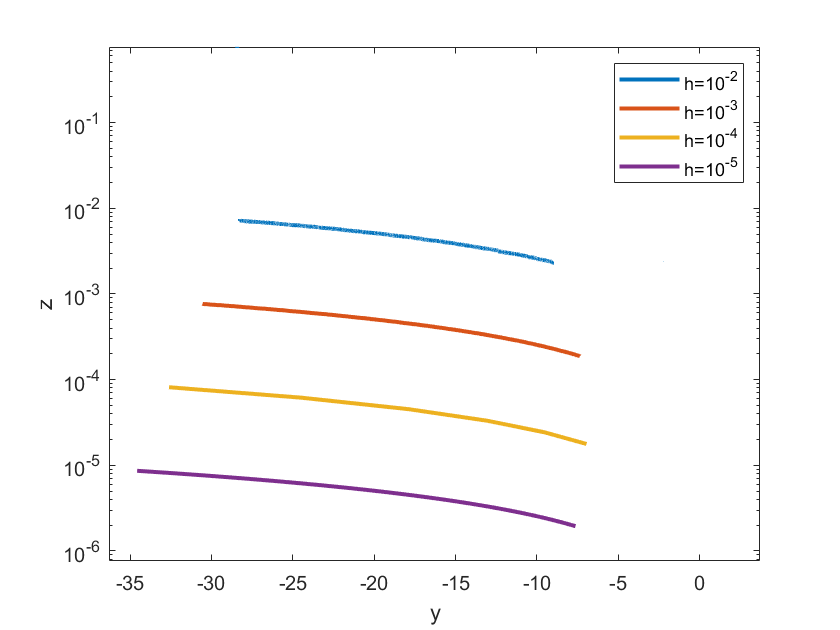}
\end{minipage}
\caption{Left: Graphical representation of the unimodal map $\mathbf{T}$,
associated with the R\"ossler flow of parameters $a=b=0.1$, $c=18$ (the axes
are inverted). Right: 1-D cross-section of the attractor with the Poincar\'{e}
section $\Sigma$ for different discretization steps $h$. }%
\label{qqqq}%
\end{figure}
This shows that the numerical estimate of the empirical average of an
observable $\phi$ of the R\"ossler's type semi-flow $\left(  \tilde{\chi}^{t},
t\geq0 \right)  $ defined by the vector field $\tilde{f}_c$, that is the empirical
average of $\phi$ computed along the trajectories of the numerically
integrated system, gets very close to the expected value $\hat{\mu}\left[
\phi\circ\Theta\right]  ,$ where $\Theta$ now stands for the diffeomorphism
conjugating $\left(  \tilde{\chi}^{t}, t\geq0 \right)  $ with its associated
suspension semi-flow and $\hat{\mu}$ is the measure with density
$\frac{\mathbf{1}_{\left[  0,\mathfrak{t}\right]  }}{\mu_{\hat{R}}\left[
\mathfrak{t}\right]  }$ w.r.t. $\mu_{\hat{R}}\otimes\lambda.$ In particular
this argument can be applied to estimate the generalized dimensions spectrum
of the physical measure $\mu$ of the flow defined by $\tilde{f}_c$ as it will be
highlighted in Remark \ref{Rem_Dq} at the end of this section. 
In order to do so, we start by proving Proposition \ref{DqRoss} below, which relies on the following 
lemma whose proof is deferred to the appendix.

\begin{lemma}
\label{lemm} Let $I$ and $J$ be two intervals such that $I\subseteq J$ and $%
T:I\circlearrowleft$ a map admitting a physical measure $\mu_{T}.$
Consider the map $\hat{R}:I\times J\circlearrowleft$ such that, given $%
\varphi :J\circlearrowleft$ a bounded $\mathcal{B}\left( J\right) $%
-measurable function, 
\begin{equation}
I\times J\ni\left( y,z\right) \longmapsto \hat{R}\left( y,z\right) :=\left(
T\left( y\right) ,\varphi\left( y\right) \right) \in I\times J\ .
\end{equation}
Then, for any $f\in BL\left( I\times J\right) ,$ where $BL\left( I\times
J\right) $ is the space of real-valued bounded Lipschitz functions on $%
I\times J$ of norm $1,$%
\begin{equation}
\lim_{n\rightarrow\infty}\frac{1}{n}\sum_{k=1}^{n}f\circ \hat{R}^{k}=\mu_{T}%
\left[ f\left( \cdot,\varphi\left( \cdot\right) \right) \right]
\;,\;dydz-a.s.\ .
\end{equation}
\end{lemma}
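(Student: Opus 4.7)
The plan is to reduce the Birkhoff sum for the skew product $\hat R$ to a one-dimensional Birkhoff sum for the base map $T$ and then invoke the physical measure property of $\mu_T$. Since $\hat R(y,z)=(T(y),\varphi(y))$, after a single iterate the $z$-coordinate is already a function of the preceding $y$-coordinate alone; a straightforward induction yields, for every $k\ge 1$,
\begin{equation*}
\hat R^k(y,z)=\bigl(T^k(y),\,\varphi(T^{k-1}(y))\bigr),
\end{equation*}
which no longer depends on $z$. Setting $h(y):=f(T(y),\varphi(y))$, the time average rewrites as
\begin{equation*}
\frac{1}{n}\sum_{k=1}^{n}f\bigl(\hat R^k(y,z)\bigr)=\frac{1}{n}\sum_{j=0}^{n-1}h\bigl(T^j(y)\bigr),
\end{equation*}
so the $dy\,dz$-a.s.\ claim reduces to a $dy$-a.s.\ statement on $I$ about an ordinary Birkhoff sum for $T$.

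Were $\varphi$ continuous, $h$ would be continuous and bounded, and the physical measure property of $\mu_T$ would immediately deliver $\tfrac{1}{n}\sum_{j=0}^{n-1}h(T^jy)\to\mu_T[h]$ for $dy$-a.e.\ $y$, from which the lemma (after identifying the limit via $T$-invariance of $\mu_T$) would follow. Since $\varphi$ is only assumed bounded Borel measurable, I plan to approximate it by a sequence of uniformly bounded continuous functions $\varphi_m$ satisfying $\varphi_m\to\varphi$ in $L^1(\mu_T)$, for instance via Lusin's theorem combined with Tietze extension. Setting $h_m(y):=f(T(y),\varphi_m(y))$, each $h_m$ is continuous and, since $f$ is $1$-Lipschitz, $|h-h_m|\le|\varphi-\varphi_m|$. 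For each $m$ the physical measure property gives $\tfrac{1}{n}\sum_{j=0}^{n-1}h_m(T^jy)\to\mu_T[h_m]$ for $dy$-a.e.\ $y$, while the error is controlled by applying Birkhoff's ergodic theorem to the bounded non-negative observable $|\varphi-\varphi_m|$:
\begin{equation*}
\limsup_{n\to\infty}\frac{1}{n}\sum_{j=0}^{n-1}|\varphi-\varphi_m|\bigl(T^j(y)\bigr)\le\mu_T\bigl[|\varphi-\varphi_m|\bigr]\xrightarrow[m\to\infty]{}0.
\end{equation*}
Sending first $n\to\infty$ and then $m\to\infty$, and combining with the $L^1$-convergence $\mu_T[h_m]\to\mu_T[h]$, yields convergence of the Birkhoff sum to $\mu_T[h]$, which by $T$-invariance of $\mu_T$ is the quantity stated in the lemma.

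The main obstacle is the possible lack of continuity of $\varphi$, which forbids a direct appeal to the physical measure property of $\mu_T$ (ordinarily phrased only for continuous observables); the Lusin-based approximation together with the Birkhoff-theorem error control above is exactly what is needed to extend the conclusion to the bounded-measurable regime. Making this work requires that $\mu_T$ be ergodic and absolutely continuous with respect to Lebesgue, so that the $\mu_T$-a.e.\ convergence supplied by Birkhoff's theorem translates into the $dy$-a.e.\ convergence demanded by the lemma; both properties are standard for the physical measures of the unimodal maps relevant to the R\"ossler setting.
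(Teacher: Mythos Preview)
Your approach differs from the paper's: you reduce directly to the one-dimensional Birkhoff average $\tfrac{1}{n}\sum_{j=0}^{n-1}h(T^jy)$ with $h(y)=f(Ty,\varphi(y))$, and handle the possibly discontinuous $\varphi$ by a Lusin approximation together with Birkhoff's theorem applied to the error $|\varphi-\varphi_m|$. The paper instead mollifies $f$ in the second variable with a kernel $g_\sigma$, applies the physical-measure property to the smoothed (hence continuous) observable, and lets $\sigma\downarrow 0$; this avoids ever invoking Birkhoff for a discontinuous observable, at the cost of some extra bookkeeping. Your route is more transparent, at the price of the auxiliary hypotheses (ergodicity and absolute continuity of $\mu_T$) that you flag at the end.

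There is, however, a genuine gap in your last step. You correctly obtain the limit $\mu_T[h]=\int f(Ty,\varphi(y))\,\mu_T(dy)$ and then assert that ``$T$-invariance of $\mu_T$'' identifies this with the stated limit $\int f(y,\varphi(y))\,\mu_T(dy)$. But invariance says $\mu_T[G]=\mu_T[G\circ T]$; applied with $G(y)=f(y,\varphi(y))$ it yields $\int f(y,\varphi(y))\,\mu_T(dy)=\int f(Ty,\varphi(Ty))\,\mu_T(dy)$, which is \emph{not} $\int f(Ty,\varphi(y))\,\mu_T(dy)$. A concrete obstruction: for $T(y)=2y\bmod 1$ on $[0,1]$ with $\mu_T=\mathrm{Leb}$, $\varphi(y)=y$ and $f(y,z)=yz$, one computes $\int_0^1(2y\bmod 1)\,y\,dy=\tfrac{7}{24}\neq\tfrac{1}{3}=\int_0^1 y^2\,dy$. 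What your computation actually shows is that the empirical measures of $\hat R$ converge to the pushforward of $\mu_T$ under $y\mapsto(Ty,\varphi(y))$, which is supported on a graph over $I$ shifted by one iterate of $T$ relative to $\mathrm{graph}(\varphi)$. For the downstream Proposition this shift is harmless---either graph is a Lipschitz curve whose first-coordinate marginal is $\mu_T$---but the invariance step as written does not establish the equality the lemma claims.
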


\begin{proposition} \label{DqRoss}
Denoting $\mu_\mathbf{T}$ the physical measure associated with $\mathbf{T}$, we have that $D_q({\mu}_{\hat{R}})=D_q(\mu_\mathbf{T})$ for all $q \in \mathbb{R}$, provided $%
D_q(\mu_\mathbf{T})$ exists.
\end{proposition}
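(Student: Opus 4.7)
The plan is to exploit Lemma \ref{lemm} to identify $\mu_{\hat R}$ as the image of $\mu_{\mathbf T}$ under a bi-Lipschitz embedding of $I$ into $I\times J$, and then to read off the generalized dimensions directly from this pushforward structure. Concretely, I would first apply Lemma \ref{lemm} (passing from $BL$ to $C^{0}$ by density of bounded Lipschitz functions) to conclude that
\[
\mu_{\hat R}[f]\;=\;\mu_{\mathbf T}\bigl[f(\cdot,\varphi(\cdot))\bigr]\qquad\forall\,f\in C^{0}(I\times J),
\]
so that $\mu_{\hat R}=H_{\ast}\mu_{\mathbf T}$ for the graph map $H(y):=(y,\varphi(y))$. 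In particular $\mathrm{supp}\,\mu_{\hat R}$ sits on the Lipschitz curve $\Gamma:=H(I)$, using that in the R\"ossler setup $\varphi=\varphi_{h}$ is smooth and hence Lipschitz with some constant $L_{\varphi}>0$.

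Next, because the first component of $H$ is the identity, $H$ is bi-Lipschitz: for all $y_{1},y_{2}\in I$,
\[
|y_{1}-y_{2}|\;\leq\;\|H(y_{1})-H(y_{2})\|_{2}\;\leq\;\sqrt{1+L_{\varphi}^{2}}\,|y_{1}-y_{2}|.
\]
Setting $c:=\sqrt{1+L_{\varphi}^{2}}$, this yields the two-sided ball comparison
\[
\mu_{\mathbf T}\bigl(B_{r/c}^{(1)}(y)\bigr)\;\leq\;\mu_{\hat R}\bigl(B_{r}^{(2)}(H(y))\bigr)\;\leq\;\mu_{\mathbf T}\bigl(B_{r}^{(1)}(y)\bigr)
\]
for every $y\in I$ and $r>0$. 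Substituting this into the integral defining $D_{q}$ and using $\mu_{\hat R}=H_{\ast}\mu_{\mathbf T}$ gives, for $q>1$,
\[
\int\mu_{\mathbf T}(dy)\,\mu_{\mathbf T}\bigl(B_{r/c}^{(1)}(y)\bigr)^{q-1}\;\leq\;\int\mu_{\hat R}(dx)\,\mu_{\hat R}\bigl(B_{r}^{(2)}(x)\bigr)^{q-1}\;\leq\;\int\mu_{\mathbf T}(dy)\,\mu_{\mathbf T}\bigl(B_{r}^{(1)}(y)\bigr)^{q-1},
\]
with the inequalities reversed for $q<1$. Taking logarithms, dividing by $(q-1)\log r$ and letting $r\downarrow0$, the factor $c$ is absorbed and one obtains $D_{q}(\mu_{\hat R})=D_{q}(\mu_{\mathbf T})$ whenever $q\neq1$. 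The case $q=1$ is handled in the same way: the ball comparison delivers the pointwise equality $d_{\mu_{\hat R}}(H(y))=d_{\mu_{\mathbf T}}(y)$, and integration against $\mu_{\hat R}=H_{\ast}\mu_{\mathbf T}$, combined with dominated convergence exactly as in the proof of point 3 of Theorem \ref{mainT}, yields $D_{1}(\mu_{\hat R})=D_{1}(\mu_{\mathbf T})$.

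The step I expect to be delicate is really the first one. Lemma \ref{lemm} as stated only assumes $\varphi$ to be bounded and $\mathcal{B}(J)$-measurable, which is insufficient to guarantee that $\Gamma$ is a Lipschitz graph or that $H$ is bi-Lipschitz; without this regularity the ball comparison, and hence the equality of the $D_{q}$, can genuinely fail. In the R\"ossler application this regularity is built in (cf.\ Fig.~\ref{qqqq}, where the numerical graph $z=\varphi_{h}(y)$ is visibly smooth), so I would state Lipschitz continuity of $\varphi$ as an additional standing hypothesis before carrying out the argument above.
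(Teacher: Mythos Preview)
Your argument is correct and essentially the same as the paper's: both use Lemma~\ref{lemm} to identify $\mu_{\hat R}$ as $\mu_{\mathbf T}$ pushed onto the graph of $\varphi_h$, and then invoke bi-Lipschitz invariance of generalized dimensions. The only cosmetic difference is that the paper composes with the ``flattening'' diffeomorphism $\Phi(y,z)=(y,z-\varphi_h(y))$ to land on $\mu_{\mathbf T}\otimes\delta_0$, whereas you work directly with the graph embedding $H=\Phi^{-1}|_{I\times\{0\}}$ and spell out the ball comparison; your remark that Lipschitz regularity of $\varphi_h$ is needed (and only implicit in the paper's claim that $\Phi$ is a diffeomorphism) is well taken.
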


\begin{proof}
The previous lemma implies that $\hat{\mu}_{R}$ is supported on $\Gamma _{h},$ the
graph of $\varphi _{h}$. Now,
\begin{equation}
\mu _{\Phi }:=\mu _{\mathbf{T}}\otimes \delta _{0}\
\end{equation}%
being the push-forward of $\hat{\mu}_{R}$ under the diffeomorphism 
\begin{equation}
\mathbb{R}^{2}\ni (y,z)\longmapsto \Phi (y,z)=(y,z-\varphi _{h}(y))\in 
\mathbb{R}^{2}\ ,
\end{equation}%
we have that 
\begin{equation}
D_{q}(\hat{\mu}_{R})=D_{q}(\mu _{\Phi })=D_{q}(\mu _{\mathbf{T}})
\end{equation}%
for all $q\in \mathbb{R}$ provided $D_{q}(\mu _{\mathbf{T}})$ exists.
\end{proof}\\
\begin{figure}[tbp]
\includegraphics[height=3in]{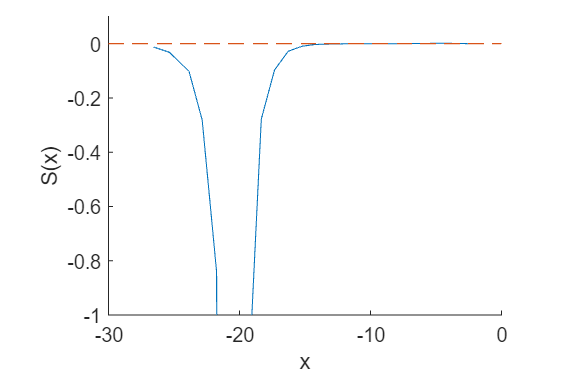}
\caption{Schwartzian derivative of $\mathbf{T}$ for the parameters $a=b=0.1$
and $c=18$.}
\label{sch}
\end{figure}

Let us now suppose that $\mathbf{T}$ is regular enough, so that its
Schwartzian derivative $\mathbf{S}$ is well-defined outside of the critical
point. This is a priori not implied by the regularity of the velocity field. Numerical
simulations suggest that $\mathbf{S}$ is negative, at least for the set of
parameters considered (See Figure 3). Under these assumptions, one can refer
to the well-developed theory of S-unimodal maps to compute the generalized
dimension spectrum of $\mu_\mathbf{T}$. Depending on the parameters $a$, $b$
and $c$, $\mu_{\mathbf{T}}$ is supported on either one of these 3 types of
sets \cite{Ke}:

\begin{enumerate}
\item a finite union of intervals, in which case $\mu_\mathbf{T}$ is
absolutely continuous with respect to Lebesgue,

\item a finite collection of points, in which case $\mu_\mathbf{T}$ is a sum of Dirac
measures,

\item a Cantor set.
\end{enumerate}
For our choice of parameters $a$, $b$ and $c$, we can reasonably rule out the second possibility, for otherwise, the dynamics of the system would be periodic.\\
Sets of type 1 and 3 can be hard to distinguish from numerical simulations, so, in order to discriminate between the two cases, we estimate numerically the Lyapunov exponent 
\begin{equation}
\lambda_{\mathbf{T}}(y)=\lim_{n\to\infty}\frac1n\sum_{i=1}^{n}\log|\mathbf{T}'(\mathbf{T}^i(y))| \, ,    
\end{equation}
for different generic points $y$ in the domain of $\mathbf{T}.$  The value of $\lambda_{\mathbf{T}}(y)$ turns out to be independent on the particular choice of $y$ and to be positive ($\approx 0.36$). From \cite{Ke}, $\mu_{\mathbf{T}}$ must therefore be supported on a set of type 1 rather than on a set of type 3 (which only occurs for a subset of parameters of zero Lebesgue measure in the set of parameters of $S-$unimodal maps \cite{Ke}). It is known that under such assumptions, the density is bounded away from 0 on its support and contains singularities distributed along the orbit of the critical point \cite{AM}. In particular, $\mu_\mathbf{T}$ should satisfy the hypothesis of the following Theorem, whose proof can be found in the appendix, which is enough to compute $D_q(\mu_\mathbf{T})$ explicitly.

\begin{theorem}\label{propp}
Let $I\subset \mathbb{R}$ be an interval and $\nu$ a probability measure on $\left( I,\mathcal{B}(I)\right)$ that is absolutely continuous with respect to the Lebesgue measure with density $\rho:I\to \mathbb{R}_{+}$. Suppose $\rho$ is bounded away from $0$ on its support and of the form

\begin{equation}
\rho=\psi_0 + \sum_{k=1}^{\infty} \frac{\psi_k\chi_k}{|x-x_k|^{\alpha_k}} \ ,
\end{equation}
where $\psi_0$ is bounded, $\{x_{k}\}_{k \geq 1}\subset I$ and, for any $k \geq 1,$

\begin{itemize}
\item $0<\alpha _{k+1} \leq\alpha _{k}<1;$

\item For all $k\ge 1$, $\psi_k$ is continuous at $x_k$ and $\psi_{k}(x_k)\neq 0;$

\item $\chi _{k}$ is either $\mathbf{1}_{[x_{k},+\infty )}$ or $\mathbf{1}%
_{(-\infty ,x_{k}]}.$
\end{itemize}
Then denoting $\alpha=\underset{k\ge 1}\sup \{\alpha_k\}=\alpha_1,$ we have

\begin{equation}
D_{q}(\nu)=
\begin{cases}
1 \ \text{if} \  q<1/\alpha,\\
\frac{q(1-\alpha)}{q-1} \ \text{otherwise}. 
\end{cases}
\end{equation}

\end{theorem}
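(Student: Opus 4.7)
The plan is to analyze the asymptotic behavior of
\[\mathcal{I}_{q}(r):=\int_{I}\rho(x)\,\nu\!\left(B_{r}^{(1)}(x)\right)^{q-1}dx\]
as $r\downarrow 0$ and extract $D_{q}(\nu)$ directly from the definition by isolating the leading power of $r$. The key intermediate step is to control $\nu(B_{r}^{(1)}(x))$ according to the position of $x$ relative to the singular set $\{x_{k}\}$: for $x$ at distance at least $r$ from every $x_{k}$ one has $\nu(B_{r}^{(1)}(x))\asymp r$, using boundedness above and bounded-away-from-zero of $\rho$ there; at a singular point, integrating the density and using continuity and non-vanishing of $\psi_{k}$ at $x_{k}$ yields $\nu(B_{r}^{(1)}(x_{k}))\asymp r^{1-\alpha_{k}}$; and for $x$ with $r\ll|x-x_{k}|\ll 1$ the density is essentially $|x-x_{k}|^{-\alpha_{k}}$ across the ball, so $\nu(B_{r}^{(1)}(x))\asymp r\,|x-x_{k}|^{-\alpha_{k}}$. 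The one-sidedness encoded by $\chi_{k}$ only affects multiplicative constants.

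Next, I would decompose $\mathcal{I}_{q}(r)$ into a bulk contribution from $x$ at distance at least $\delta$ from every $x_{k}$, which behaves as $r^{q-1}$, plus, for each $k$, an annular contribution split into the inner part $|x-x_{k}|\leq r$, yielding $r^{1-\alpha_{k}}\cdot(r^{1-\alpha_{k}})^{q-1}=r^{q(1-\alpha_{k})}$, and the outer shell $r<|x-x_{k}|<\delta$, yielding $r^{q-1}\int_{r}^{\delta}d^{-\alpha_{k}q}\,dd$. The shell integral is $O(1)$ when $\alpha_{k}q<1$ and of order $r^{1-\alpha_{k}q}$ when $\alpha_{k}q>1$, so each singularity contributes the exponent $\min(q-1,\,q(1-\alpha_{k}))$. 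Since $\alpha_{1}=\alpha$ is the largest of the $\alpha_{k}$, the smallest such exponent is $q(1-\alpha)$, which is strictly less than $q-1$ exactly when $q>1/\alpha$. Dividing $\log\mathcal{I}_{q}(r)$ by $(q-1)\log r$ and taking $r\downarrow 0$ then yields the stated dichotomy.

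The main obstacle I expect is twofold. First, one must control the infinite series over $k$, showing that the tail does not overwhelm the leading term coming from $x_{1}$; the monotonicity $\alpha_{k+1}\leq\alpha_{k}$ ensures each tail singularity contributes a term of strictly higher order in $r$, while the summability implicit in $\rho\in L^{1}$ controls the corresponding prefactors uniformly, so the whole tail can be absorbed into the bulk estimate. Second, the case $q<1$ requires extra care because $\nu^{q-1}$ is large on small balls, reversing many of the inequalities used above; one has to verify that the bulk term $r^{q-1}$ genuinely dominates $r^{q(1-\alpha_{k})}$, which follows since $q(1-\alpha_{k})-(q-1)=1-q\alpha_{k}>0$ whenever $q\alpha_{k}<1$, and in particular whenever $q<1$, confirming $D_{q}(\nu)=1$ throughout the range $q<1/\alpha$.
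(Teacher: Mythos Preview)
Your local analysis near a single singularity---splitting into bulk, inner ball $|x-x_k|\leq r$, and outer shell $r<|x-x_k|<\delta$, and extracting the exponents $q-1$, $q(1-\alpha_k)$, and the shell-integral contribution---is correct and is exactly what the paper does in its auxiliary proposition for the \emph{finite}-singularity case. The dichotomy you derive by comparing $q-1$ against $q(1-\alpha)$, and your treatment of $q<1$ via the sign of $1-q\alpha_k$, also match the paper's computation there.

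The gap is in your passage to infinitely many singularities. You assert that on the bulk one has $\nu(B_r(x))\asymp r$ by ``boundedness above and bounded-away-from-zero of $\rho$ there''; but the hypotheses give only the lower bound. Nothing in the statement bounds $\rho$ from above away from the singular set: the $\psi_k$ are only assumed continuous at $x_k$, and with infinitely many terms the series $\sum_k\psi_k\chi_k|x-x_k|^{-\alpha_k}$ need not be bounded on any open set, so the upper half of your $\asymp r$ is unjustified. Similarly, your tail argument (``summability implicit in $\rho\in L^1$ controls the prefactors uniformly, so the whole tail can be absorbed into the bulk estimate'') does not go through as stated: $L^1$ control of $\sum_{k>n}\rho_k$ gives no uniform or pointwise bound of the kind needed inside $\nu(B_r(x))^{q-1}$.

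The paper avoids this entirely by a truncation step you do not have. It first proves the result for the finite-sum density $\rho_n=\psi_0+\sum_{k\leq n}\rho_k$ (your local analysis), and then, setting $c_n:=\int\sum_{k>n}\rho_k\to 0$, uses the sandwich $\nu_n(B_r(x))\leq\nu(B_r(x))\leq\nu_n(B_r(x))+c_n$ to compare $\nu[\nu(B_r(\cdot))^{q-1}]$ with $\nu_n[\nu_n(B_r(\cdot))^{q-1}]$ up to an additive error controlled by $c_n$, separately for $q>1$ and $q<1$. This yields $D_q(\nu)=D_q(\nu_n)$ without ever needing $\rho$ bounded above. Your plan becomes correct if you replace the direct infinite-sum control by this truncation-and-compare argument.
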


\begin{remark}
For certain quadratic maps of Benedicks-Carleson type, $\alpha_k=1/2$ for all $k$ \cite{BS}. For such maps, we computed in \cite{CGSV} a quantity obtained by taking the Legendre transform of the singularity spectrum $f(\alpha)$ of the measure $\nu$. This quantity is known to coincide with $ D_q(\nu)$ under some conditions on $f$ which are not verified in the present context (in particular, $f$ must be defined on an interval \cite{Pe}). Remarkably, our formula agrees with the quantity computed in \cite{CGSV}, even though we are in the presence of countably infinitely many singularities, suggesting that this computation method remains valid in a broader context. 
\end{remark}
Applying Theorem \ref{mainT}, we get the following formula:

\begin{equation}\label{dqrr}
D_{q}(\hat{\mu})=
\begin{cases}
2 \ \text{if} \  q< 1/\alpha,\\
1+ \frac{q(1-\alpha)}{q-1} \ \text{otherwise} \ . 
\end{cases}
\end{equation} 
Several publications give estimates of the correlation dimension $D_2$ for the R\"ossler system that are compatible with Formula (\ref{dqrr}). For example, \cite{SR} gives $D_2=1.986 \pm 0.078$
. Although these estimates are obtained for the parameters $a=0.1,b=0.1,c=14$, Formula (\ref{dqrr}) still holds if $\mathbf{T}$ is a regular $S-$unimodal map and $\mu_{\mathbf{T}}$ is absolutely continuous. We believe that it is a generic situation for choices of $a,b,c$ such that the R\"ossler system is chaotic, since the set of unimodal maps having a Cantor set as an attractor has zero Lebesgue measure in the set of parametrizable S-unimodal maps. The values of these parameters may influence the value of $\alpha$, which depends on the behavior of the unimodal map near its critical point. Estimates of $D_q$ for negative $q$ are known to be subject to important statistical errors \cite{PSR}, which could explain the discrepancy between the curve in Figure 1 in \cite{L2} and Formula (\ref{dqrr}) in this range.\\

\begin{figure}[!tbp]
  \centering
    \includegraphics[width=8cm]{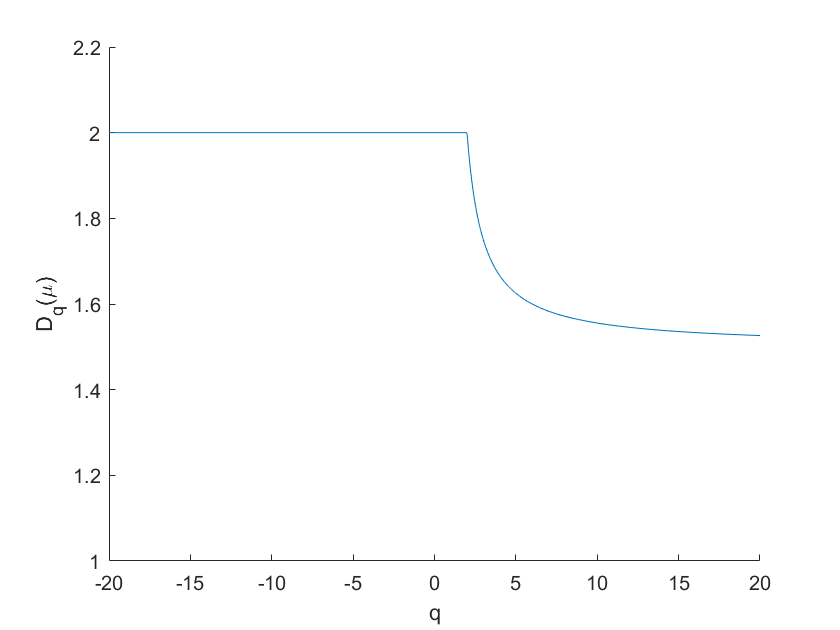}
  \caption{Representation of the $D_q$ spectrum of the measure $\hat{\mu}$, as given by formula (\ref{dqrr}) for $\alpha=1/2$. }
  \label{dqqqq}
  \end{figure}

\begin{remark} \label{Rem_Dq}
$\left . \right .$ \linebreak
\begin{enumerate} 
    \item Although we considered $R$ to be unidimensional, it is in fact a two-dimensional map and the support of $\mu_R$ should exhibit a fractal structure in the $z$ direction, although invisible in numerical experiments. The contribution of this vertical component should overall increase the values of its $D_q$ spectrum.

    \item In order to justify analytically that the strong contraction of the flow in the vertical direction yields a Poincar\'e application that has the features of a unidimensional map, our analysis has been carried on for the velocity field $\tilde{f}_c$, but the same considerations apply to the original R\"ossler field $f_c$. In particular, it was observed in several numerical studies the same features hold for the original R\"ossler flow's Poincar\'e map (see for instance \cite{LDM}).

    \item For values of $a,b,c$ such that the R\"ossler system evolves toward a periodic orbit, the invariant measure associated with the return map is a sum of Dirac measures, so that $D_q(\mu_R)=0$ for all $q$. In that case, Theorem \ref{mainT} yields $D_q(\mu)=1$ for all $q$, as expected for a measure supported on a closed curve.
\end{enumerate}
\end{remark}

\section{Lorenz-like flows}

Conversely to the R\"{o}ssler flow, singular-hyperbolic attractors (a.k.a.
Lorenz-like flows), are known to admit a unique SRB measure which is also
the physical measure of the system \cite{APPV}. In particular this holds for
the so-called geometric Lorenz flows, which encapture the main topological
features of singular hyperbolic attractors (\cite{APPV} Corollary 3).
Moreover, by \cite{MPP} and \cite{Tu} the classical Lorenz attractor is
singular and in view of \cite{AMe}, \cite{HM} Proposition 2.6 and \cite{Tu}
Section 2.4, the same considerations on the physical measure of Lorenz-like
flows apply to the classical Lorenz flow. In particular, Theorem C in \cite%
{APPV} holds also in this case as stated in \cite{APPV} Corollary 4.

The Haussdorff dimension of geometric Lorenz attractors is
known to be strictly greater than $2$ \cite{PRL}, while in \cite{GP} it has
been proved that the physical measure $\mu _{R}$ of the Poincar\'{e} map $R$
on which that of the return map $R$ corresponding Lorenz-like flow $\mu $ is constructed,\
following the scheme presented in the introduction, is exact dimensional. To
our knowledge no other result concerning the spectrum of generalized
dimensions of the physical measure of singular-hyperbolic attractors appear
in the literature. Here we make a step forward in this direction although a
complete answer to problem of computing $D_{q}\left( \mu \right) ,q\in 
\mathbb{R},$ is still out of reach.

In order to make this section self-contained further notations are needed to
set the problem.

Let $\mathcal{M}$ be a compact boundaryless three-dimensional Riemannian
manifold and $m$ the associated volume form. We denote by $\Lambda $ a
singular-hyperbolic attractor of a $C^{2}$ flow $\left( \chi ^{t},\ t\in 
\mathbb{R}\right) $ generated by the vector field $V$ on $\mathcal{M}$ and $%
\mu $ be the physical measure for $\left( \chi ^{t},\ t\in \mathbb{R}\right) 
$ supported on $\Lambda .$ By Theorem B in \cite{APPV} $\mu $ is hyperbolic,
that is for $\mu $-a.e. $x\in \Lambda $ the tangent bundle $T_{x}\mathcal{M}$
splits as three one-dimensional invariant subspaces, namely $E_{x}^{s}\oplus
E_{x}^{V}\oplus E_{x}^{u},$ where $E_{x}^{V}$ is the flow's direction and $%
E_{x}^{u}$ is the direction with positive Lyapunov exponent. This implies
the existence for $\mu $-a.e. $x\in \Lambda $ of the strong-unstable
manifold 
\begin{equation}
W_{x}^{uu}:=\left\{ y\in \mathcal{M}:\lim_{t\rightarrow -\infty }dist\left(
\chi ^{t}\left( y\right) ,\chi ^{t}\left( x\right) \right) =0\right\}
\subset \Lambda \ .
\end{equation}%
By definition of singular-hyperbolic attractor the sub-bundle $E_{\Lambda
}^{s}:=\bigcup\limits_{x\in \Lambda }\left\{ x\right\} \times E_{x}^{s}$ is
uniformly contracting, i.e., for any $t>0$ and any $x\in \Lambda ,\left\Vert
D\chi ^{t}\upharpoonleft _{E_{x}^{x}}\right\Vert <c\kappa ^{t}$ for some $%
c>0,\kappa \in \left( 0,1\right) .$ This implies the existence of the
strong-stable manifold $W_{x}^{ss}$ tangent to $E_{x}^{s}$ at $x.$ Let then,
for any $x\in \Lambda ,W_{x}^{s}:=\bigcup\limits_{t\in \mathbb{R}}\chi
^{t}\left( W_{x}^{ss}\right) $ be the stable manifold of $x$ and, for $\mu $%
-a.e. $x\in \Lambda ,W_{x}^{u}:=\bigcup\limits_{t\in \mathbb{R}}\chi
^{t}\left( W_{x}^{uu}\right) $ be the unstable manifold of $x.$ Given a smooth
cross-section $\Sigma $ of the flow at $x\in \Lambda ,$ let $\omega _{0}$ be
the connected component of $W_{x}^{s}\cap \Sigma $ containing $x.$ Since $%
\omega_{0}$ is a smooth curve, one can take a parametrization $\Psi :\left[
-\epsilon ,\epsilon \right] ^{2}\mapsto \Sigma $ of a compact neighborhood $%
\Sigma _{0}$ of $x$ such that: $\Psi \left( 0,0\right) =x,$ the graph of the
function $\left[ -\epsilon ,\epsilon \right] \ni u\longmapsto \Psi \left(
u,0\right) \in \Sigma $ is a subset of $\omega _{0}$ and the graph of the
function $\left[ -\epsilon ,\epsilon \right] \ni u\longmapsto \Psi \left(
0,u\right) \in \Sigma _{0}$ is a curve $\omega _{1}$ transverse to $\omega
_{0}.$ Thus, given $z\in \Sigma _{0},$ the connected component of $%
W_{z}^{u}\cap \Sigma _{0}$ containing $z,$ which we denote by $\zeta _{z},$
is said to cross $\Sigma _{0}$ if it can be written as the graph of a
function in the coordinate system given by the arc-lengths of $\omega _{0}$
and $\omega _{1}.$ Let $\Pi _{0}:=\left\{ \zeta _{z}:z\in \Sigma
_{0}\right\} .$ Given $\delta >0$ and $\zeta \in \Pi _{0},$ the
two-dimensional surface $\left\{ \chi ^{t}\left( \zeta \right) :t\in \left[
-\delta ,\delta \right] \right\} $ is given by a family of curves each of
which is tangent to $E_{z}^{cu}:=E_{z}^{V}\oplus E_{z}^{u},z\in \zeta ,$
where the map $z\longmapsto E_{z}^{cu}$ is continuous. Then, the collection
of surfaces $\Pi _{\delta }\left( x\right) :=\left\{ \left\{ \chi ^{t}\left(
\zeta \right) :t\in \left[ -\delta ,\delta \right] \right\} :\zeta \in \Pi
_{0}\right\} $ forms a measurable partition of $\bar{\Pi}_{\delta }\left(
x\right) :=\bigcup\limits_{\gamma \in \Pi _{\delta }\left( x\right) }\gamma ,
$ hence there exists a probability kernel $\mathcal{B}\left( \mathcal{M}%
\right) \times \Pi _{\delta }\left( x\right) \ni \left( B,\gamma \right)
\longmapsto \mu \left( B|\gamma \right) \in \left[ 0,1\right] $ such that 
\begin{equation}
\mu \left( B\cap \bar{\Pi}_{\delta }\left( x\right) \right) =\int \mu \left(
B\cap \bar{\Pi}_{\delta }\left( x\right) |\gamma \right) d\bar{\mu}\left(
\gamma \right)   \label{disint}
\end{equation}%
where $\bar{\mu}$ is the measure on $\left( \Pi _{\delta }\left( x\right) ,%
\mathcal{B}\left( \Pi _{\delta }\left( x\right) \right) \right) $ such that $%
\forall A\in \mathcal{B}\left( \Pi _{\delta }\left( x\right) \right) ,\bar{%
\mu}\left( A\right) :=\mu \left( \bigcup\limits_{\gamma \in A}\gamma \right)
.$ By Theorem C in \cite{APPV}, denoting by $m_{\gamma }$ the area form
induced by $m$ on each $\gamma \in \Pi _{\delta }\left( x\right) ,$ it
follows that for $\delta $ sufficiently small, for $\bar{\mu}$-a.e. $\gamma
\in \Pi _{\delta }\left( x\right) ,\mu \left( \cdot |\gamma \right)
<<m_{\gamma }$ and the Radon-Nikodym derivative $\frac{d\mu \left( \cdot
|\gamma \right) }{dm_{\gamma }}$ is bounded from above.

\begin{proposition}
Let $\Lambda $ be a singular-hyperbolic attractor of a $C^{2}$ flow $\left(
\chi ^{t},\ t\in \mathbb{R}\right) $ generated by the vector field $V$ on a
compact boundaryless three-dimensional Riemannian manifold $\mathcal{M}$ and 
$\mu $ be the physical measure for the flow supported on $\Lambda .$ Then,
for any $x\in \Lambda ,$%
\begin{equation}
d_{\mu }^{-}(x)\geq 2\  \label{d-L}
\end{equation}%
and, for all $q\in \mathbb{R},$%
\begin{equation}
D_{q}^{-}(\mu )\geq 2\ .  \label{Dq-L}
\end{equation}
\end{proposition}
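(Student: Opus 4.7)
The plan is to establish, for every $x \in \Lambda$ and all sufficiently small $r > 0$, the universal upper bound
\begin{equation*}
\mu\bigl(B_r^{(3)}(x)\bigr) \leq C r^2
\end{equation*}
for some constant $C > 0$; both (\ref{d-L}) and (\ref{Dq-L}) then follow by routine log-manipulations. To prove this, I would fix $x \in \Lambda$ and invoke the construction of the local center-unstable partition $\Pi_\delta(x)$ recalled just before the proposition, choosing $r$ small enough that $B_r^{(3)}(x) \subset \bar{\Pi}_\delta(x)$. The disintegration (\ref{disint}) then reads
\begin{equation*}
\mu\bigl(B_r^{(3)}(x)\bigr) = \int_{\Pi_\delta(x)} \mu\bigl(B_r^{(3)}(x) \,\big|\, \gamma\bigr)\,d\bar{\mu}(\gamma).
\end{equation*}
By the consequence of Theorem C of \cite{APPV} cited in the setup, for $\bar{\mu}$-a.e.\ $\gamma$ the conditional measure $\mu(\cdot|\gamma)$ is absolutely continuous w.r.t.\ $m_\gamma$ with density bounded above by some $M > 0$, uniformly on the compact set $\bar{\Pi}_\delta(x)$. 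Since each $\gamma$ is a smooth two-dimensional surface embedded in the ambient $3$-manifold, a standard volume-comparison argument gives $m_\gamma\bigl(B_r^{(3)}(x) \cap \gamma\bigr) \leq C' r^2$ for $r$ small, uniformly in $\gamma$. Combining these two estimates and integrating against the probability measure $\bar{\mu}$ yields the claim with $C = MC'$.

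From this bound, (\ref{d-L}) is immediate: taking $\log$ of $\mu(B_r^{(3)}(x)) \leq C r^2$ and dividing by $\log r < 0$ reverses the inequality, so $\log\mu(B_r^{(3)}(x))/\log r \geq 2 + \log C/\log r$, which tends to $2$ as $r \downarrow 0$. For (\ref{Dq-L}), three cases arise. When $q > 1$, raising the bound to the $(q-1)$-th power and integrating against $\mu$ gives $\int \mu^{q-1}(B_r^{(3)}(x))\,d\mu(x) \leq C^{q-1} r^{2(q-1)}$, and dividing by $(q-1)\log r < 0$ flips the inequality to yield $D_q^-(\mu) \geq 2$. When $q < 1$, the inverse bound $\mu^{q-1}(B_r^{(3)}(x)) \geq C^{q-1} r^{2(q-1)}$ holds pointwise (the integrand being finite since $x \in \Lambda = \mathrm{supp}(\mu)$ implies $\mu(B_r^{(3)}(x)) > 0$); now $(q-1)\log r > 0$ preserves the inequality, again producing $\geq 2$. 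When $q = 1$, the inequality $\log\mu(B_r^{(3)}(x)) \leq \log C + 2\log r$ integrates directly against $\mu$ and divides by $\log r$ to give the same conclusion.

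The main point requiring care is that the hyperbolic splitting, the partition $\Pi_\delta(x)$, and the Radon--Nikodym bound of Theorem C in \cite{APPV} are guaranteed only for $\mu$-a.e.\ $x \in \Lambda$, whereas the proposition asserts the bounds for every $x \in \Lambda$. I expect this to be the chief subtlety rather than a genuine obstacle: the constants $M$ and $C'$ can be made uniform across the compact attractor using continuity of the center-unstable bundle, so a single $C = MC'$ works for every $x \in \Lambda$; alternatively one approximates a given $x$ by $\mu$-typical points and passes to the limit using the uniformity of the estimate in $r$. The remainder of the argument is purely algebraic.
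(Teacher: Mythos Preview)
Your proposal is correct and follows essentially the same approach as the paper: disintegrate $\mu$ along the local center-unstable partition $\Pi_\delta(x)$, use the bounded Radon--Nikodym density from Theorem~C of \cite{APPV}, and bound the $2$-dimensional area $m_\gamma(B_r^{(3)}(x)\cap\gamma)$ by $c r^2$. The only difference is that for (\ref{Dq-L}) the paper treats just the case $q\geq 2$ directly and then invokes the monotonicity of $q\mapsto D_q^-$ to cover all remaining $q$, whereas you carry out the case analysis $q>1$, $q<1$, $q=1$ by hand; both routes are valid, and your discussion of the ``for every $x$'' versus ``$\mu$-a.e.\ $x$'' issue is in fact more careful than the paper's own treatment.
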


\begin{proof}
By (\ref{disint}), for any $x\in \Lambda $ and $\delta >0$ sufficiently
small, since $r\downarrow 0,$%
\begin{eqnarray}
\mu \left( B_{r}^{\left( 3\right) }\left( x\right) \right)  &=&\mu \left(
B_{r}^{\left( 3\right) }\left( x\right) \cap \bar{\Pi}_{\delta }\left(
x\right) \right) =\int \mu \left( B_{r}^{\left( 3\right) }\left( x\right)
\cap \bar{\Pi}_{\delta }\left( x\right) |\gamma \right) d\bar{\mu}\left(
\gamma \right)  \\
&=&\int_{\Pi _{\delta }\left( x\right) }d\bar{\mu}\left( \gamma \right)
m_{\gamma }\left[ \frac{d\mu \left( \cdot |\gamma \right) }{dm_{\gamma }}%
\mathbf{1}_{B_{r}^{\left( 3\right) }\left( x\right) \cap \gamma }\right] \ .
\notag
\end{eqnarray}%
But, since by the definition of $\Pi _{\delta }\left( x\right)
,B_{r}^{\left( 3\right) }\left( x\right) \cap \gamma $ is a compact subset
of the two-dimensional smooth manifold $\gamma ,$ by Theorem C in \cite{APPV}
(see also \cite{AP} section 7.3.10) there exists a constant $c_{1}>0$ such
that $\frac{d\mu \left( \cdot |\gamma \right) }{dm_{\gamma }}<c_{1},$ hence 
\begin{equation}
m_{\gamma }\left[ \frac{d\mu \left( \cdot |\gamma \right) }{dm_{\gamma }}%
\mathbf{1}_{B_{r}^{\left( 3\right) }\left( x\right) \cap \gamma }\right]
\leq c_{1}m_{\gamma }\left( B_{r}^{\left( 3\right) }\left( x\right) \cap
\gamma \right) \leq c_{2}r^{2}
\end{equation}%
for some $c_{2}\geq c_{1},$ which gives (\ref{d-L}). Moreover, $\forall
q\geq 2,$ the same argument leads to 
\begin{equation}
\mu \left[ \mu ^{q-1}\left( B_{r}^{\left( 3\right) }\left( x\right) \right) %
\right] \leq c_{2}^{q-1}r^{2\left( q-1\right) }\ ,
\end{equation}%
which implies (\ref{Dq-L}) for such values of $q.$ Since $D_{q}^{-}$ is a
non-increasing function of $q,$ this result extends to all values of $q\in 
\mathbb{R}.$
\end{proof}

\begin{remark}
We recall that, as put forward in the introduction, $\mu $ is the
push-forward under the diffeomorphism $\Theta $ given in (\ref{Theta}) of the invariant
measure $\mu _{S}:=\mu _{R}\otimes dt\frac{\mathbf{1}_{\left[ 0,\mathfrak{t}%
\right] }}{\mu _{R}\left[ \mathfrak{t}\right] }$ of the suspension flow $S$
built on the Poincar\'{e} map $R$ relative to a cross-section $\Sigma $ with
return time $\mathfrak{t}.$ Then, by Theorem \ref{mainT} we get $d_{\mu
_{R}}^{-}\left( x\right) \geq 1$ and $\forall q\in \mathbb{R}%
,D_{q}^{-}\left( \mu _{R}\right) \geq 1.$
\end{remark}
In particular, for the geometric and the classical Lorenz flows, we get the following result.

\begin{proposition}
$D_{1}(\mu)$ exists and satisfies 
\begin{equation}
D_{1}(\mu)=D_{1}(\mu_R)+1\ .
\end{equation}%
\end{proposition}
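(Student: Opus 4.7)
The plan is to invoke point 3 of Theorem \ref{mainT} directly, once the hypotheses have been verified in the Lorenz-like setting. That theorem guarantees the identity $D_{1}(\mu)=D_{1}(\mu_{R})+1$ whenever (i) $\mu$ arises as the push-forward, under the conjugating diffeomorphism $\Theta$, of the suspension measure $\mu_{S}$ built over a Poincar\'e map $R$ with physical measure $\mu_{R}$, and (ii) $\mu_{R}$ is exact-dimensional. So the strategy reduces to checking these two conditions.

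First I would recall, following the discussion that opens the section, that for a singular-hyperbolic attractor $\Lambda$ the flow $(\chi^{t},t\in\mathbb{R})$ admits a smooth cross-section $\Sigma$ transverse to $V$, so that a Poincar\'e return map $R:\Sigma\circlearrowleft$ with roof function $\mathfrak{t}\in L^{1}(\mu_{R})$ is well-defined and conjugates the flow to the suspension semi-flow described in the introduction. Combined with \cite{APPV} (Corollary~3, and Corollary~4 for the classical Lorenz case via \cite{AMe}, \cite{HM} Proposition 2.6, \cite{Tu} Section~2.4) this gives condition (i) both for geometric Lorenz flows and for the classical Lorenz attractor.

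Next I would verify condition (ii). This is precisely the content of the main result of \cite{GP}: the physical measure $\mu_{R}$ of the Poincar\'e map $R$ associated with a geometric (respectively classical) Lorenz flow is exact-dimensional. In particular, $d_{\mu_{R}}(\xi)$ exists and equals the constant $D_{1}(\mu_{R})$ for $\mu_{R}$-almost every $\xi\in\Sigma$.

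With both conditions in hand, point 3 of Theorem \ref{mainT} applies verbatim and yields the existence of $D_{1}(\mu)$ together with the identity $D_{1}(\mu)=D_{1}(\mu_{R})+1$. The only subtle point, and the one I would verify most carefully, is that the construction in the introduction (in particular the diffeomorphism $\Theta$ and the integrability of $\mathfrak{t}$) is legitimate for the singular-hyperbolic setting: the flow is defined on a manifold rather than on an open subset of $\mathbb{R}^{d}$, but since the statement is a local fact about the local dimensions and these are unchanged under smooth coordinate charts, this causes no real difficulty. The main conceptual obstacle — establishing exact-dimensionality of $\mu_{R}$ — is precisely what \cite{GP} has already resolved, so no new dimension-theoretic estimate is needed here.
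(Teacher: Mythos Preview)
Your proposal is correct and follows essentially the same route as the paper: the paper's proof is the single sentence ``Since $\mu_{R}$ is exact dimensional \cite{GP}, the result follows by point 2 and 3 of Theorem \ref{mainT},'' and your argument is a more detailed elaboration of exactly this, verifying the suspension-flow hypothesis and invoking exact-dimensionality from \cite{GP} before applying Theorem \ref{mainT}. Your additional remarks about the manifold setting and the integrability of $\mathfrak{t}$ are sensible caveats but do not constitute a different approach.
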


\begin{proof}
Since $\mu_{R}$ is exact dimensional \cite{GP}, the result follows by point 2 and 3 of Theorem \ref{mainT}.
\end{proof}

\begin{remark} 
An explicit formula relates $D_{1}(\mu _{R})$ to the entropy and the partial derivatives of $R$ \cite{GP}.
\end{remark}

\section{Appendix}

\subsection*{Proof of Proposition \protect\ref{D_q}}

The proof of Proposition \ref{D_q} relies on the following lemmata.

\begin{lemma}
\label{B-C}Given $r$ sufficiently small, for any $\left( \xi ,t\right) \in {%
\Sigma }_{\mathfrak{t}},$ there exist $s\in \left[ -\frac{\kappa }{2},\frac{%
\kappa }{2}\right] $ and $c>1$ such that 
\begin{equation}
\Theta \left( B_{c^{-1}r}^{\left( d-1\right) }\left( \xi \right) \times
B_{c^{-1}r}^{\left( 1\right) }\left( t+s\right) \right) \subseteq \chi
^{s}\left( B_{r}^{\left( d\right) }\left( \Theta \left( \xi ,t\right)
\right) \right) \subseteq \Theta \left( B_{cr}^{\left( d-1\right) }\left(
\xi \right) \times B_{cr}^{\left( 1\right) }\left( t+s\right) \right)
\subset \mathcal{U}  \label{ballCyl}
\end{equation}%
and 
\begin{equation}
\Theta \left( B_{cr}^{\left( d-1\right) }\left( \xi \right) \times
B_{cr}^{\left( 1\right) }\left( t+s\right) \right) \cap \Sigma =\varnothing
\ .
\end{equation}
\end{lemma}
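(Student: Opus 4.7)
The plan is to exploit the fact that $\Theta$ is a $C^{1}$ diffeomorphism between the (compact, in view of the bound $\mathfrak{t}\leq \kappa^{-1}$ assumed in Proposition \ref{D_q}) quotient space $\mathcal{V}$ and $\mathcal{U}$, which yields uniform bi-Lipschitz bounds away from the Poincar\'e section, and to introduce the time shift $s$ precisely to stay in this ``good'' region. First I would choose $s\in [-\kappa/2,\kappa/2]$ so that $t+s$ lies at distance at least $\kappa/4$ from both $0$ and $\mathfrak{t}(\xi)$: if $t\in [\kappa/4,\mathfrak{t}(\xi)-\kappa/4]$ take $s=0$; if $t<\kappa/4$ take $s=\kappa/2$, so that $t+s\in [\kappa/2,3\kappa/4]$; if $t>\mathfrak{t}(\xi)-\kappa/4$ take $s=-\kappa/2$. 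In each case the shifted base point $(\xi,t+s)$ lies in $\mathring{\Sigma}_{\mathfrak{t}}$ at uniform distance at least $\kappa/4$ from its boundary.

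Next I would use the conjugation relation $\chi^{s}\circ \Theta = \Theta\circ \bar{S}^{s}$ to identify $\chi^{s}(\Theta(\xi,t))=\Theta(\xi,t+s)$, valid because our choice of $s$ keeps $t+s$ strictly inside $[0,\mathfrak{t}(\xi))$, so no return has occurred and $\bar{S}^{s}(\xi,t)=(\xi,t+s)$ in $\mathcal{V}$. Since $\chi^{s}$ is a $C^{1}$ diffeomorphism of $\mathbb{R}^{d}$ and $s$ ranges in the compact set $[-\kappa/2,\kappa/2]$, a standard compactness argument yields a uniform constant $c_{1}\geq 1$ such that
\begin{equation}
B_{r/c_{1}}^{(d)}(\Theta(\xi,t+s))\subseteq \chi^{s}(B_{r}^{(d)}(\Theta(\xi,t)))\subseteq B_{c_{1}r}^{(d)}(\Theta(\xi,t+s)).
\end{equation}

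Then I would invoke the bi-Lipschitz property of $\Theta$. Because $\Sigma$ is compact and $\mathfrak{t}$ is bounded above, the closed suspension $\overline{\Sigma_{\mathfrak{t}}}$ is compact, and $\Theta$ together with $\Theta^{-1}$ have uniformly bounded $C^{1}$ norm on the ``safe'' subset $K_{\kappa}:=\{(\xi',t'):\xi'\in \Sigma,\ t'\in [\kappa/8,\mathfrak{t}(\xi')-\kappa/8]\}$. Let $L\geq 1$ be a bi-Lipschitz constant of $\Theta$ on $K_{\kappa}$, where product balls in $\Sigma\times \mathbb{R}$ are measured in the max-metric (equivalent to the Riemannian metric up to a harmless factor). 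For $r$ so small that every product ball around $(\xi,t+s)$ of radius at most $c_{1}Lr$ lies in $K_{\kappa}$, which is possible by uniform continuity of $\mathfrak{t}$, we obtain
\begin{equation}
\Theta\left( B_{\rho/L}^{(d-1)}(\xi)\times B_{\rho/L}^{(1)}(t+s)\right)\subseteq B_{\rho}^{(d)}(\Theta(\xi,t+s))\subseteq \Theta\left( B_{L\rho}^{(d-1)}(\xi)\times B_{L\rho}^{(1)}(t+s)\right).
\end{equation}
Combining the two sandwiches with $c:=c_{1}L$ gives (\ref{ballCyl}). The disjointness $\Theta(B_{cr}^{(d-1)}(\xi)\times B_{cr}^{(1)}(t+s))\cap \Sigma=\varnothing$ then follows because $\Sigma$ lifts in $\Sigma\times \mathbb{R}_{+}$ to $\{(\xi',\mathfrak{s}_{n}(\xi'))\}_{\xi'\in \Sigma,\ n\geq 0}$, and since $\mathfrak{s}_{n}\geq n\kappa$ only $n=0,1$ can meet the window $[0,\kappa^{-1}]$; our choice of $s$ together with uniform continuity of $\mathfrak{t}$ keeps $t+s$ at distance $\geq \kappa/8$ from $\{0,\mathfrak{t}(\xi')\}$ for every $\xi'\in B_{cr}^{(d-1)}(\xi)$, so shrinking $r$ so that $cr<\kappa/8$ concludes.

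The main technical obstacle I anticipate is ensuring uniformity of the constant $c$ as $(\xi,t)$ varies; this hinges critically on the upper bound $\mathfrak{t}\leq \kappa^{-1}$ (so $\overline{\Sigma_{\mathfrak{t}}}$ is compact and $\Theta$ has uniformly bounded $C^{1}$ norm on its safe interior) and on continuity of $\mathfrak{t}$ (so small horizontal perturbations do not shrink the safe vertical interval too much). The time shift $s$ is the device that avoids the delicate geometry of $\Theta$ near the gluing $\Sigma$ and reduces everything to estimates in the interior where $\Theta$ is plainly a local diffeomorphism.
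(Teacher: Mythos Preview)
Your proposal is correct and follows essentially the same strategy as the paper: introduce a time shift $s$ to push the base point away from the Poincar\'e section, then use bi-Lipschitz bounds on $\chi^{s}$ and on $\Theta$ in the ``safe'' interior region. The paper's proof differs only in execution: it splits cases according to whether $\Theta(\xi,t)$ lies in a spatial tubular neighbourhood $\mathcal{U}_{r_{0}}(\Sigma)$ of $\Sigma$ (rather than by the value of the $t$-coordinate), and it derives the bi-Lipschitz constant for $\chi^{s}$ explicitly via Gronwall's inequality (obtaining $e^{sL}$) instead of via an abstract compactness argument; your treatment is somewhat cleaner but equivalent in substance.
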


\begin{proof}
Let $r_{0}\in \left( 0,1\right) $ be sufficiently small such that $\mathcal{U%
}_{r_{0}}\left( \Sigma \right) :=\bigcup\limits_{y\in \Sigma
}B_{r_{0}}^{\left( d\right) }\left( y\right) $ is strictly contained in $%
\mathcal{U}$ and does not contain critical points of the phase velocity
field $V,$ so that $\inf_{y\in \mathcal{U}_{r_{0}}\left( \Sigma \right)
}\left\vert V\left( y\right) \right\vert >0.$ Making use of (\ref{counter}) we set 
\begin{align}
\mathcal{U}_{r_{0}}^{+}\left( \Sigma \right) &:=\left\{ x\in \mathcal{U}_{r_{0}}\left(
\Sigma \right) :\mathfrak{n}\left( \pi _{2}\left( x\right) ,r_{0}+\pi
_{1}\left( x\right) \right) =0\right\} \,,\\
\mathcal{U}_{r_{0}}^{-}\left(\Sigma \right) &:=\left\{ x\in \mathcal{U}_{r_{0}}\left( \Sigma \right) :%
\mathfrak{n}\left( \pi _{2}\left( x\right) ,r_{0}+\pi _{1}\left( x\right)
\right) =1\right\} =\mathcal{U}_{r_{0}}\left( \Sigma \right) \backslash 
\mathcal{U}_{r_{0}}^{+}\left( \Sigma \right) .    
\end{align}
Then, since by construction $\Theta $ is bi-Lipschitz, for all $%
B_{r}^{\left( d\right) }\left( \Theta \left( \xi ,t\right) \right) $ such
that $\Theta \left( \xi ,t\right) \in \mathcal{U}\backslash \mathcal{U}%
_{r_{0}}\left( \Sigma \right) $ and $r<\frac{r_{0}}{2},$ (\ref{ballCyl}) is
clearly satisfied for $s=0.$

Moreover, setting $V_{r_{0}}^{-}:=\max_{i=1,..,d}\inf_{y\in \mathcal{U}%
_{r_{0}}\left( \Sigma \right) }\left\vert V_{i}\left( y\right) \right\vert ,$
for any $y\in \mathcal{U}_{r_{0}}^{+}\left( \Sigma \right) $ and $s>0,$%
\begin{equation}
\left\vert \chi ^{s}\left( y\right) -y\right\vert ^{2}=\sum_{i=1}^{d}\left(
\int_{0}^{s}du\left( \chi _{i}^{u}\left( y\right) \right) \right)
^{2}=\sum_{i=1}^{d}\left( sV_{i}\left( \chi ^{\lambda _{i}s}\left( y\right)
\right) \right) ^{2}
\end{equation}%
for some $\left( \lambda _{1},..,\lambda _{d}\right) \in \left( 0,1\right)
^{d}.$ Therefore, $\left\vert \chi ^{s}\left( y\right) -y\right\vert \geq
sV_{r_{0}}^{-}.$ Hence, denoting by $L:=\sup_{y\in \mathcal{U}}\left\vert
DV\left( y\right) \right\vert $ the Lipschitz constant of $V$ in $\mathcal{U}%
,$ given $x\in \mathcal{U}_{r_{0}}\left( \Sigma \right) ,$ for any $y,z\in
B_{r}^{\left( d\right) }\left( x\right) ,s>0,$ by the Gronwall inequality,%
\begin{equation}
\left\vert \chi ^{s}\left( y\right) -\chi ^{s}\left( z\right) \right\vert
\leq \left\vert x-y\right\vert +\int_{0}^{s}du\left\vert V\left( \chi
^{u}\left( y\right) \right) -V\left( \chi ^{u}\left( z\right) \right)
\right\vert \leq 2re^{sL}\ .
\end{equation}%
Since by definition $\left( 0,1\right) \ni r_{0}\longmapsto V_{r_{0}}^{-}\in %
\left[ V_{1}^{-},V_{0}^{-}\right] ,$ where $V_{0}^{-}:=\max_{i=1,..,d}%
\inf_{y\in \Sigma }\left\vert V_{i}\left( y\right) \right\vert ,$ is a
non-increasing function, $\left( 0,1\right) \ni r_{0}\longmapsto \frac{r_{0}%
}{V_{r_{0}}^{-}}\in \left( 0,\frac{1}{V_{1}^{-}}\right) $ is a
non-decreasing function, therefore we can choose $r_{0}$ sufficiently small
such that $\frac{r_{0}}{V_{r_{0}}^{-}}<\frac{\kappa }{8}$ and for all $s\in
\left( 2\frac{r_{0}}{V_{r_{0}}^{-}},4\frac{r_{0}}{V_{r_{0}}^{-}}\right)
,dist\left( \chi ^{s}\left( B_{r}^{\left( d\right) }\left( x\right) \right)
,\Sigma \right) >2r_{0}$ and $diam\left( \chi ^{s}\left( B_{r}^{\left(
d\right) }\left( x\right) \right) \right) <2re^{4r_{0}\frac{L}{V_{r_{0}}^{-}}%
}.$ Hence, given $\left( \xi ,t\right) \in \Sigma _{\mathfrak{t}}$ such that 
$\Theta \left( \xi ,t\right) \in \mathcal{U}_{r_{0}}^{+}\left( \Sigma
\right) ,$ since by construction $s<\frac{\kappa }{2},\mathfrak{n}\left( \xi
,s+t\right) =0$ and for $r<\frac{r_{0}}{2}e^{-4r_{0}\frac{L}{V_{r_{0}}^{-}}%
}, $ setting $C:=e^{4r_{0}\frac{L}{V_{r_{0}}^{-}}},$%
\begin{gather}
\chi ^{s}\left( B_{r}^{\left( d\right) }\left( \Theta \left( \xi ,t\right)
\right) \right) \subseteq B_{rC}^{\left( d\right) }\left( \left( \chi
^{s}\circ \Theta \right) \left( \xi ,t\right) \right) =B_{rC}^{\left(
d\right) }\left( \left( \Theta \circ S^{s}\right) \left( \xi ,t\right)
\right)  \label{UBballcyl} \\
=B_{rC}^{\left( d\right) }\left( \Theta \left( R^{\mathfrak{n}\left( \xi
,s+t\right) }\left( \xi \right) ,\left( s+t-\mathfrak{s}_{\mathfrak{n}\left(
\xi ,s+t\right) }\right) \right) \right) =B_{rC}^{\left( d\right) }\left(
\Theta \left( \xi ,t+s\right) \right) \subseteq  \notag \\
\subseteq \Theta \left( B_{rcC}^{\left( d-1\right) }\left( \xi \right)
\times B_{rcC}^{\left( 1\right) }\left( t+s\right) \right) \ .  \notag
\end{gather}%
On the other hand, since $\forall y,z\in B_{r}^{\left( d\right) }\left(
x\right) ,$ and $t>s\geq 0,$%
\begin{gather}
\left\vert \chi ^{t}\left( y\right) -\chi ^{t}\left( z\right) \right\vert
-\left\vert \chi ^{s}\left( y\right) -\chi ^{s}\left( z\right) \right\vert
=\left\vert \left( y-z\right) +\int_{0}^{s}du\left( V_{i}\left( \chi
^{u}\left( y\right) \right) -V_{i}\left( \chi ^{u}\left( z\right) \right)
\right) +\right. \\
\left. +\int_{s}^{t}du\left( V_{i}\left( \chi ^{u}\left( y\right) \right)
-V_{i}\left( \chi ^{u}\left( z\right) \right) \right) \right\vert
-\left\vert \left( y-z\right) +\int_{0}^{s}du\left( V_{i}\left( \chi
^{u}\left( y\right) \right) -V_{i}\left( \chi ^{u}\left( z\right) \right)
\right) \right\vert \geq  \notag \\
\geq -L\int_{s}^{t}du\left\vert \chi _{i}^{u}\left( y\right) -\chi
_{i}^{u}\left( z\right) \right\vert \ ,  \notag
\end{gather}%
we get 
\begin{equation}
\left\{ 
\begin{array}{l}
\frac{d}{ds}\left\vert \chi ^{s}\left( y\right) -\chi ^{s}\left( z\right)
\right\vert \geq -L\left\vert \chi ^{s}\left( y\right) -\chi ^{s}\left(
z\right) \right\vert \\ 
\left\vert \chi ^{s}\left( y\right) -\chi ^{s}\left( z\right) \right\vert
\upharpoonleft _{s=0}=\left\vert y-z\right\vert%
\end{array}%
\right. \ ,
\end{equation}%
which implies $\left\vert \chi ^{s}\left( y\right) -\chi ^{s}\left( z\right)
\right\vert \geq \left\vert y-z\right\vert e^{-Ls}.$ Hence, if $\left( \xi
,t\right) \in \Sigma _{\mathfrak{t}}$ such that $\Theta \left( \xi ,t\right)
\in \mathcal{U}_{r_{0}}^{+}\left( \Sigma \right) ,$ there exists $c^{\prime
}\in (0,1]$ such that, for all $s\in \left( 2\frac{r_{0}}{V_{r_{0}}^{-}},4%
\frac{r_{0}}{V_{r_{0}}^{-}}\right) ,$%
\begin{gather}
\chi ^{s}\left( B_{r}^{\left( d\right) }\left( \Theta \left( \xi ,t\right)
\right) \right) \supseteq B_{rc^{\prime }C^{-1}}^{\left( d\right) }\left(
\left( \chi ^{s}\circ \Theta \right) \left( \xi ,t\right) \right)
=B_{rc^{\prime }C^{-1}}^{\left( d\right) }\left( \left( \Theta \circ
S^{s}\right) \left( \xi ,t\right) \right)  \label{LBballcyl} \\
=B_{rc^{\prime }C^{-1}}^{\left( d\right) }\left( \Theta \left( R^{\mathfrak{n%
}\left( \xi ,s+t\right) }\left( \xi \right) ,\left( s+t-\mathfrak{s}_{%
\mathfrak{n}\left( \xi ,s+t\right) }\right) \right) \right) =B_{rc^{\prime
}C^{-1}}^{\left( d\right) }\left( \Theta \left( \xi ,t+s\right) \right)
\supseteq  \notag \\
\supseteq \Theta \left( B_{rc^{.1}c^{\prime }C^{-1}}^{\left( d-1\right)
}\left( \xi \right) \times B_{rc^{.1}c^{\prime }C^{-1}}^{\left( 1\right)
}\left( t+s\right) \right) \ .  \notag
\end{gather}%
Thus, setting $K:=cC\left( c^{\prime }\right) ^{-1},$%
\begin{gather}
\Theta \left( B_{rK^{-1}}^{\left( d-1\right) }\left( \xi \right) \times
B_{rK^{-1}}^{\left( 1\right) }\left( t+s\right) \right) \subseteq \chi
^{s}\left( B_{r}^{\left( d\right) }\left( \Theta \left( \xi ,t\right)
\right) \right) \subseteq \\
\subseteq \Theta \left( B_{rK}^{\left( d-1\right) }\left( \xi \right) \times
B_{rK}^{\left( 1\right) }\left( t+s\right) \right)  \notag
\end{gather}%
provided $r$ is chosen sufficiently small.

On the other hand, if $\left( \xi ,t\right) \in \Sigma _{\mathfrak{t}}$ such
that $\Theta \left( \xi ,t\right) \in \mathcal{U}_{r_{0}}^{-}\left( \Sigma
\right) ,$ from (\ref{UBballcyl}) and (\ref{LBballcyl}) we get 
\begin{equation}
\chi ^{s}\left( B_{rK^{-1}}^{\left( d\right) }\left( \Theta \left( \xi
,t-s\right) \right) \right) \subseteq B_{r}^{\left( d\right) }\left( \Theta
\left( \xi ,t\right) \right) \subseteq \chi ^{s}\left( B_{rK}^{\left(
d\right) }\left( \Theta \left( \xi ,t-s\right) \right) \right) \ ,
\end{equation}%
which implies 
\begin{equation}
\Theta \left( B_{rK^{-1}}^{\left( d-1\right) }\left( \xi \right) \times
B_{rK^{-1}}^{\left( 1\right) }\left( t-s\right) \right) \subseteq \chi
^{-s}\left( B_{r}^{\left( d\right) }\left( \Theta \left( \xi ,t\right)
\right) \right) \subseteq \Theta \left( B_{rK}^{\left( d-1\right) }\left(
\xi \right) \times B_{rK}^{\left( 1\right) }\left( t-s\right) \right) \ .
\end{equation}
\end{proof}

\begin{lemma}
\label{Ld4}Let 
\begin{equation}
\nu _{R}\left( d\xi \right) :=\mu _{R}\left( d\xi \right) \frac{\mathfrak{t}%
\left( \xi \right) }{\mu _{R}\left[ \mathfrak{t}\right] }\ .
\end{equation}%
Then, for $q\neq 1$ 
\begin{equation}
D_{q}\left( \nu _{R}\right) =D_{q}\left( \mu _{R}\right) \ .
\end{equation}
\end{lemma}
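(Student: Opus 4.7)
The key simplification available to us is that the hypothesis of Proposition \ref{D_q}, namely $\kappa\le\mathfrak{t}\le\kappa^{-1}$, is still in force in this lemma. Consequently, the Radon--Nikodym derivative
\begin{equation}
\frac{d\nu_R}{d\mu_R}(\xi)=\frac{\mathfrak{t}(\xi)}{\mu_R[\mathfrak{t}]}
\end{equation}
is pinched between two strictly positive constants, $\frac{\kappa}{\mu_R[\mathfrak{t}]}\le \frac{d\nu_R}{d\mu_R}\le \frac{1}{\kappa\,\mu_R[\mathfrak{t}]}$. The plan is therefore to exploit this uniform comparability to show that the integrals appearing in the definition of $D_q^\pm$ differ only by multiplicative constants, which then vanish after taking the logarithm and dividing by $\log r$.

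More concretely, the first step is to record that for every $\xi\in\Sigma$ and every $r>0$,
\begin{equation}
\frac{\kappa}{\mu_R[\mathfrak{t}]}\,\mu_R\!\left(B_r^{(d-1)}(\xi)\right)\le \nu_R\!\left(B_r^{(d-1)}(\xi)\right)\le \frac{1}{\kappa\,\mu_R[\mathfrak{t}]}\,\mu_R\!\left(B_r^{(d-1)}(\xi)\right).
\end{equation}
Then, for $q>1$, I would raise this inequality to the power $q-1$ and integrate against $\nu_R$, using once more the two-sided bound on $d\nu_R/d\mu_R$ to replace the outer $\nu_R(d\xi)$ by $\mu_R(d\xi)$. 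This produces constants $C_1(q,\kappa,\mu_R[\mathfrak{t}])\le C_2(q,\kappa,\mu_R[\mathfrak{t}])$ such that
\begin{equation}
C_1\int_\Sigma\mu_R(d\xi)\,\mu_R^{q-1}\!\left(B_r^{(d-1)}(\xi)\right)\le \int_\Sigma\nu_R(d\xi)\,\nu_R^{q-1}\!\left(B_r^{(d-1)}(\xi)\right)\le C_2\int_\Sigma\mu_R(d\xi)\,\mu_R^{q-1}\!\left(B_r^{(d-1)}(\xi)\right).
\end{equation}
For $q<1$ exactly the same comparison applies, with the roles of the upper and lower bounds swapped by the fact that $x\mapsto x^{q-1}$ is now decreasing; the resulting constants are again strictly positive and finite.

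Taking logarithms, dividing by $(q-1)\log r$, and sending $r\downarrow 0$ shows that the constants $C_1,C_2$ contribute only an $O(1/|\log r|)$ error, which disappears in the limit. Hence $D_q^\pm(\nu_R)=D_q^\pm(\mu_R)$, and whenever $D_q(\mu_R)$ exists so does $D_q(\nu_R)$ with the same value. There is no serious obstacle here: the argument is essentially the observation that generalized dimensions are invariant under multiplication of a measure by a density bounded away from $0$ and $\infty$, and all the work in the unbounded-$\mathfrak{t}$ regime has already been relegated to Lemma \ref{Ld5} in the main text.
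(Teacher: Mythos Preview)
Your proof is correct and follows essentially the same route as the paper's own argument: both exploit the two-sided bound $\kappa\le\mathfrak{t}\le\kappa^{-1}$ to sandwich $\int\nu_R(d\xi)\,\nu_R^{q-1}(B_r^{(d-1)}(\xi))$ between constant multiples of $\int\mu_R(d\xi)\,\mu_R^{q-1}(B_r^{(d-1)}(\xi))$, treating $q>1$ and $q<1$ separately, and then let the constants vanish after dividing by $\log r$. The paper simply writes out the explicit constants $(\kappa/\mu_R[\mathfrak{t}])^{\pm q}$ rather than phrasing things in terms of the Radon--Nikodym derivative, but the content is identical.
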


\begin{proof}
Let $q>1.$ Since there exists $\kappa >0$ such that, $\kappa \leq \mathfrak{%
t\leq }\frac{1}{\kappa },$ we have 
\begin{align}
\int_{\Sigma }\nu _{R}\left( d\xi \right) \nu _{R}^{q-1}\left( B_{r}^{\left(
d-1\right) }\left( \xi \right) \right) & \leq \frac{1}{\left\{ \kappa \mu
_{R}\left[ \mathfrak{t}\right] \right\} ^{q}}\int_{\Sigma }\mu _{R}\left(
d\xi \right) \mu _{R}^{q-1}\left( B_{r}^{\left( d-1\right) }\left( \xi
\right) \right) \ , \\
\int_{\Sigma }\nu _{R}\left( d\xi \right) \nu _{R}^{q-1}\left( B_{r}^{\left(
d-1\right) }\left( \xi \right) \right) & \geq \left\{ \frac{\kappa }{\mu _{R}%
\left[ \mathfrak{t}\right] }\right\} ^{q}\int_{\Sigma }\mu _{R}\left( d\xi
\right) \mu _{R}^{q-1}\left( B_{r}^{\left( d-1\right) }\left( \xi \right)
\right) \ .
\end{align}

Similarly, if $q<1,$%
\begin{gather}
\int_{\Sigma }\nu _{R}\left( d\xi \right) \nu _{R}^{q-1}\left( B_{r}^{\left(
d-1\right) }\left( \xi \right) \right) =\int_{\Sigma }\nu _{R}\left( d\xi
\right) \frac{1}{\nu _{R}^{1-q}\left( B_{r}^{\left( d-1\right) }\left( \xi
\right) \right) } \\
\leq \frac{1}{\kappa \mu _{R}\left[ \mathfrak{t}\right] }\frac{\mu _{R}^{1-q}%
\left[ \mathfrak{t}\right] }{\kappa ^{1-q}}\int_{\Sigma }\mu _{R}\left( d\xi
\right) \frac{1}{\mu _{R}^{1-q}\left( B_{r}^{\left( d-1\right) }\left( \xi
\right) \right) }  \notag \\
=\left\{ \frac{\kappa }{\mu _{R}\left[ \mathfrak{t}\right] }\right\}
^{q}\int_{\Sigma }\mu _{R}\left( d\xi \right) \mu _{R}^{q-1}\left(
B_{r}^{\left( d-1\right) }\left( \xi \right) \right) \ ,  \notag \\
\int_{\Sigma }\nu _{R}\left( d\xi \right) \frac{1}{\nu _{R}^{1-q}\left(
B_{r}^{\left( d-1\right) }\left( \xi \right) \right) }\geq \frac{1}{\left\{
\kappa \mu _{R}\left[ \mathfrak{t}\right] \right\} ^{q}}\int_{\Sigma }\mu
_{R}\left( d\xi \right) \mu _{R}^{q-1}\left( B_{r}^{\left( d-1\right)
}\left( \xi \right) \right) \ .
\end{gather}
\end{proof}

\begin{lemma}
\label{Ld4bis}For $q>1,$%
\begin{eqnarray}
\mu \left[ \mu ^{q-1}\left( B_{r}^{\left( d\right) }\left( \cdot \right)
\right) \right] &\leq &\left( 2cr\right) ^{q-1}\int_{\Sigma }\mu _{R}\left(
d\xi \right) \frac{\mathfrak{t}\left( \xi \right) }{\mu _{R}^{q}\left[ 
\mathfrak{t}\right] }\mu _{R}^{q-1}\left( B_{cr}^{\left( d-1\right) }\left(
\xi \right) \right) \ ,  \label{q>1ub} \\
\mu \left[ \mu ^{q-1}\left( B_{r}^{\left( d\right) }\left( \cdot \right)
\right) \right] &\geq &\left( \frac{2}{c}r\right) ^{q-1}\int_{\Sigma }\mu
_{R}\left( d\xi \right) \frac{\mathfrak{t}\left( \xi \right) }{\mu _{R}^{q}%
\left[ \mathfrak{t}\right] }\mu _{R}^{q-1}\left( B_{c^{-1}r}^{\left(
d-1\right) }\left( \xi \right) \right) \ ,  \label{q>1lb}
\end{eqnarray}%
while, for $q<1,$%
\begin{eqnarray}
\mu \left[ \mu ^{q-1}\left( B_{r}^{\left( d\right) }\left( \cdot \right)
\right) \right] &\leq &\left( \frac{2}{c}r\right) ^{q-1}\int_{\Sigma }\mu
_{R}\left( d\xi \right) \frac{\mathfrak{t}\left( \xi \right) }{\mu _{R}^{q}%
\left[ \mathfrak{t}\right] }\mu _{R}^{q-1}\left( B_{c^{-1}r}^{\left(
d-1\right) }\left( \xi \right) \right) \ ,  \label{q<1ub} \\
\mu \left[ \mu ^{q-1}\left( B_{r}^{\left( d\right) }\left( \cdot \right)
\right) \right] &\geq &\left( 2cr\right) ^{q-1}\int_{\Sigma }\mu _{R}\left(
d\xi \right) \frac{\mathfrak{t}\left( \xi \right) }{\mu _{R}^{q}\left[ 
\mathfrak{t}\right] }\mu _{R}^{q-1}\left( B_{cr}^{\left( d-1\right) }\left(
\xi \right) \right) \ .  \label{q<1lb}
\end{eqnarray}
\end{lemma}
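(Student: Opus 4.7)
The plan is to transport the integral defining $\mu[\mu^{q-1}(B_r^{(d)}(\cdot))]$ to suspension coordinates and to replace each Euclidean ball measure by the measure of a product cylinder in $\Sigma\times\mathbb{R}_+$. Since $\mu$ is invariant under $(\chi^t,\,t\in\mathbb{R})$, one has $\mu(B_r^{(d)}(\Theta(\xi,t)))=\mu(\chi^s(B_r^{(d)}(\Theta(\xi,t))))$ for the particular $s\in[-\kappa/2,\kappa/2]$ supplied by Lemma \ref{B-C}. Feeding the two inclusions of that lemma into the push-forward identity $\mu=\Theta_*\mu_S$ yields the sandwich
\begin{equation}
\mu_S\left(B_{c^{-1}r}^{(d-1)}(\xi)\times B_{c^{-1}r}^{(1)}(t+s)\right)\leq\mu\left(B_r^{(d)}(\Theta(\xi,t))\right)\leq\mu_S\left(B_{cr}^{(d-1)}(\xi)\times B_{cr}^{(1)}(t+s)\right)\ .
\end{equation}

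Next I would evaluate the two cylinder measures explicitly. The same lemma guarantees that the larger cylinder lies inside $\mathring{\Sigma}_{\mathfrak{t}}$, so on both cylinders the density of $\mu_S$ with respect to $\mu_R\otimes\lambda$ equals the constant $1/\mu_R[\mathfrak{t}]$, and therefore
\begin{equation}
\mu_S\left(B_{\rho}^{(d-1)}(\xi)\times B_{\rho}^{(1)}(t+s)\right)=\frac{2\rho}{\mu_R[\mathfrak{t}]}\,\mu_R\left(B_{\rho}^{(d-1)}(\xi)\right)
\end{equation}
for $\rho\in\{cr,c^{-1}r\}$, independently of $t$ and $s$.

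To conclude, I would raise the resulting sandwich to the power $q-1$, keeping in mind that for $q<1$ this reverses the two inequalities, and then integrate against $\mu$. Using $\mu[\mu^{q-1}(B_r^{(d)}(\cdot))]=\int\mu_S(d(\xi,t))\,\mu^{q-1}(B_r^{(d)}(\Theta(\xi,t)))$ together with the explicit density of $\mu_S$, the $t$-integration over $[0,\mathfrak{t}(\xi)]$ produces an extra factor $\mathfrak{t}(\xi)/\mu_R[\mathfrak{t}]$, which combines with the $(1/\mu_R[\mathfrak{t}])^{q-1}$ coming from the sandwich to give precisely the prefactor $\mathfrak{t}(\xi)/\mu_R^q[\mathfrak{t}]$ that appears in the statement. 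The constants $(2cr)^{q-1}$ and $(2c^{-1}r)^{q-1}$ arise from the sizes of the intervals in the vertical direction.

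The main obstacle is simply bookkeeping: one must check that the value of $s$ coming from Lemma \ref{B-C} keeps the translated cylinders inside $\mathring{\Sigma}_{\mathfrak{t}}$ (so that the product formula for $\mu_S$ is valid without a boundary correction coming from the identification $(\xi,\mathfrak{t}(\xi))\sim(R\xi,0)$), and that the direction of the inequality is tracked correctly in each of the four cases $q>1$ (upper/lower) and $q<1$ (upper/lower). Once these bookkeeping items are handled, the four inequalities (\ref{q>1ub})--(\ref{q<1lb}) follow with the stated constants.
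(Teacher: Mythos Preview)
Your proposal is correct and follows essentially the same route as the paper: flow-invariance of $\mu$, the two-sided inclusion of Lemma~\ref{B-C}, the explicit product formula for $\mu_S$ on a cylinder contained in $\mathring{\Sigma}_{\mathfrak{t}}$, and then integration against $\mu_S$ to produce the factor $\mathfrak{t}(\xi)/\mu_R^q[\mathfrak{t}]$. The only cosmetic difference is that the paper makes the bookkeeping you allude to explicit by splitting the outer integral over the three regions $\mathcal{U}\setminus\mathcal{U}_0$, $\mathcal{U}_0^+$, $\mathcal{U}_0^-$ (corresponding to $s=0$, $s>0$, $s<0$ in Lemma~\ref{B-C}) and then summing the three pieces.
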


\begin{proof}
Let $r_{0}\in \left( 0,1\right) $ be such that Lemma \ref{B-C} is in force.
Since $\mu $ is invariant for $\left( \chi ^{t},t\in \mathbb{R}\right) ,$
for any $q\neq 1$ and any $x\in \mathcal{U},$%
\begin{equation}
\mu ^{q-1}\left( B_{r}^{\left( d\right) }\left( x\right) \right) =\mu
^{q-1}\left( \chi ^{u}\left( B_{r}^{\left( d\right) }\left( x\right) \right)
\right) \;,\;u\in \mathbb{R}\ .
\end{equation}%
Moreover, 
\begin{align}
\mu \left[ \mu ^{q-1}\left( B_{r}^{\left( d\right) }\left( \cdot \right)
\right) \right] & = \mu \left[ \mu ^{q-1}\left( B_{r}^{\left( d\right) }\left(
\cdot \right) \right) \mathbf{1}_{\mathcal{U}\backslash \mathcal{U}_{0}}%
\right] +\mu \left[ \mu ^{q-1}\left( B_{r}^{\left( d\right) }\left( \cdot
\right) \right) \mathbf{1}_{\mathcal{U}_{0}^{+}}\right] + \notag \\
&+\mu \left[ \mu
^{q-1}\left( B_{r}^{\left( d\right) }\left( \cdot \right) \right) \mathbf{1}%
_{\mathcal{U}_{0}^{-}}\right] \ .
\end{align}%
Hence, adjusting the definition of $\mathcal{U}_{0}^{+}$ in such a way that $%
\mathcal{V}\ni \left( \xi ,t\right) \longmapsto \mathbf{1}_{\mathcal{U}%
_{0}^{+}}\circ \Theta \left( \xi ,t\right) =\mathbf{1}_{\Sigma }\left( \xi
\right) \times \mathbf{1}_{\left[ 0,r_{0}\right] }\left( t\right) $ and $%
\mathcal{V}\ni \left( \xi ,t\right) \longmapsto \mathbf{1}_{\mathcal{U}%
_{0}^{-}}\circ \Theta \left( \xi ,t\right) =\mathbf{1}_{\Sigma }\left( \xi
\right) \times \mathbf{1}_{\left[ \mathfrak{t}\left( \xi \right) -r_{0},%
\mathfrak{t}\left( \xi \right) \right] }\left( t\right) ,$ for $q>1,$ by
Lemma \ref{B-C}, we can choose $u\in \left[ 0,\frac{\kappa }{2}\right] $
such that 
\begin{align}
\mu \left[ \mu ^{q-1}\left( B_{r}^{\left( d\right) }\left( \cdot \right)
\right) \mathbf{1}_{\mathcal{U}_{0}^{+}}\right] & =\mu \left[ \mu
^{q-1}\left( \chi ^{u}\left( B_{r}^{\left( d\right) }\left( \cdot \right)
\right) \right) \mathbf{1}_{\mathcal{U}_{0}^{+}}\right] \\
& \leq \mu \left[ \mu ^{q-1}\Theta \left( B_{cr}^{\left( d-1\right) }\left(
\pi _{2}\circ \Theta ^{-1}\left( \cdot \right) \right) \times B_{cr}^{\left(
1\right) }\left( \pi _{1}\circ \Theta ^{-1}\left( \cdot \right) +u\right)
\right) \mathbf{1}_{\mathcal{U}_{0}^{+}}\right]  \notag \\
& =\Theta _{\ast }\mu _{S}\left[ \left( \Theta _{\ast }\mu _{S}\right)
^{q-1}\Theta \left( B_{cr}^{\left( d-1\right) }\left( \pi _{2}\circ \Theta
^{-1}\left( \cdot \right) \right) \times B_{cr}^{\left( 1\right) }\left( \pi
_{1}\circ \Theta ^{-1}\left( \cdot \right) +u\right) \right) \mathbf{1}_{%
\mathcal{U}_{0}^{+}}\right]  \notag \\
& =\mu _{S}\left[ \left( \Theta _{\ast }\mu _{S}\right) ^{q-1}\left( \Theta
\left( B_{cr}^{\left( d-1\right) }\left( \pi _{2}\left( \cdot \right)
\right) \times B_{cr}^{\left( 1\right) }\left( \pi _{1}\left( \cdot \right)
+u\right) \right) \right) \mathbf{1}_{\mathcal{U}_{0}^{+}}\circ \Theta %
\right]  \notag \\
& =\mu _{S}\left[ \mu _{S}^{q-1}\left( B_{cr}^{\left( d-1\right) }\left( \pi
_{2}\left( \cdot \right) \right) \times B_{cr}^{\left( 1\right) }\left( \pi
_{1}\left( \cdot \right) +u\right) \right) \mathbf{1}_{\mathcal{U}%
_{0}^{+}}\circ \Theta \right]  \notag \\
& =\int_{\Sigma }\mu _{R}\left( d\xi \right) \frac{\int_{0}^{\mathfrak{t}%
\left( \xi \right) }dt}{\mu _{R}\left[ \mathfrak{t}\right] }\left\{
\int_{\Sigma }\mu _{R}\left( d\eta \right) \frac{\int_{0}^{\mathfrak{t}%
\left( \eta \right) }ds}{\mu _{R}\left[ \mathfrak{t}\right] }\mathbf{1}%
_{B_{cr}^{\left( 1\right) }\left( t+u\right) }\left( s\right) \mathbf{1}%
_{B_{cr}^{\left( d-1\right) }\left( \xi \right) }\left( \eta \right)
\right\} ^{q-1}\mathbf{1}_{\left[ 0,r_{0}\right] }\left( t\right)  \notag
\end{align}%
for some $c>1$ and $r$ sufficiently small. Since by construction $%
B_{cr}^{\left( 1\right) }\left( t+u\right) \subset \left[ 0,\kappa \right] ,$%
\begin{equation}
\mu \left[ \mu ^{q-1}\left( B_{r}^{\left( d\right) }\left( \cdot \right)
\right) \mathbf{1}_{\mathcal{U}_{0}^{+}}\right] \leq \left( 2cr\right)
^{q-1}\int_{\Sigma }\mu _{R}\left( d\xi \right) \frac{\int_{0}^{\mathfrak{t}%
\left( \xi \right) }dt}{\mu _{R}^{q}\left[ \mathfrak{t}\right] }\mu
_{R}^{q-1}\left( B_{cr}^{\left( d-1\right) }\left( \xi \right) \right) 
\mathbf{1}_{\left[ 0,r_{0}\right] }\left( t\right) \ .  \label{q>1ub+}
\end{equation}%
On the other hand, 
\begin{gather}
\mu \left[ \mu ^{q-1}\left( B_{r}^{\left( d\right) }\left( \cdot \right)
\right) \mathbf{1}_{\mathcal{U}_{0}^{-}}\right] =\mu \left[ \mu
^{q-1}\left( \chi ^{-u}\left( B_{r}^{\left( d\right) }\left( \cdot \right)
\right) \right) \mathbf{1}_{\mathcal{U}_{0}^{-}}\right] \\
\leq \int_{\Sigma }\mu _{R}\left( d\xi \right) \frac{\int_{0}^{\mathfrak{t}%
\left( \xi \right) }dt}{\mu _{R}\left[ \mathfrak{t}\right] }\left\{
\int_{\Sigma }\mu _{R}\left( d\eta \right) \frac{\int_{0}^{\mathfrak{t}%
\left( \eta \right) }ds}{\mu _{R}\left[ \mathfrak{t}\right] }\mathbf{1}%
_{B_{cr}^{\left( 1\right) }\left( t-u\right) }\left( s\right) \mathbf{1}%
_{B_{cr}^{\left( d-1\right) }\left( \xi \right) }\left( \eta \right)
\right\} ^{q-1}\mathbf{1}_{\left[ \mathfrak{t}\left( \xi \right) -r_{0},%
\mathfrak{t}\left( \xi \right) \right] }\left( t\right) \ ,  \notag
\end{gather}%
so that 
\begin{equation}
\mu \left[ \mu ^{q-1}\left( B_{r}^{\left( d\right) }\left( \cdot \right)
\right) \mathbf{1}_{\mathcal{U}_{0}^{+}}\right] \leq \left( 2cr\right)
^{q-1}\int_{\Sigma }\mu _{R}\left( d\xi \right) \frac{\int_{0}^{\mathfrak{t}%
\left( \xi \right) }dt}{\mu _{R}^{q}\left[ \mathfrak{t}\right] }\mu
_{R}^{q-1}\left( B_{cr}^{\left( d-1\right) }\left( \xi \right) \right) 
\mathbf{1}_{\left[ \mathfrak{t}\left( \xi \right) -r_{0},\mathfrak{t}\left(
\xi \right) \right] }\left( t\right) \ .  \label{q>1ub-}
\end{equation}%
Furthermore, 
\begin{eqnarray}
\mu \left[ \mu ^{q-1}\left( B_{r}^{\left( d\right) }\left( \cdot \right)
\right) \mathbf{1}_{\mathcal{U}\backslash \mathcal{U}_{0}}\right] 
&\leq & \int_{\Sigma }\mu _{R}\left( d\xi \right) \frac{\int_{0}^{\mathfrak{t}%
\left( \xi \right) }dt}{\mu _{R}\left[ \mathfrak{t}\right] }\left\{
\int_{\Sigma }\mu _{R}\left( d\eta \right) \frac{\int_{0}^{\mathfrak{t}%
\left( \eta \right) }ds}{\mu _{R}\left[ \mathfrak{t}\right] }\mathbf{1}%
_{B_{cr}^{\left( 1\right) }\left( t\right) }\left( s\right) 
\mathbf{1}_{B_{cr}^{\left( d-1\right) }\left( \xi \right) }\left( \eta \right)\right\}^{q-1} \times  \label{q>1ub_res} \\
&\times & \mathbf{1}_{\left[ r_{0},\mathfrak{t}\left( \xi \right) -r_{0}
\right] }\left( t\right)  \notag \\
&\leq & \left( 2cr\right) ^{q-1}\int_{\Sigma }\mu _{R}\left( d\xi \right) 
\frac{\int_{0}^{\mathfrak{t}\left( \xi \right) }dt}{\mu _{R}^{q}\left[ 
\mathfrak{t}\right] }\mu _{R}^{q-1}\left( B_{cr}^{\left( d-1\right) }\left(
\xi \right) \right) \mathbf{1}_{\left[ r_{0},\mathfrak{t}\left( \xi \right)
-r_{0}\right] }\left( t\right) \ .  \notag
\end{eqnarray}%
Thus, summing (\ref{q>1ub+}), (\ref{q>1ub-}) and (\ref{q>1ub_res}), we get (%
\ref{q>1ub}). (\ref{q>1lb}) follows in a similar way, as well as (\ref{q<1ub}%
) and (\ref{q<1lb}) just noting that, for $q<1,$%
\begin{equation}
\mu \left[ \mu ^{q-1}\left( B_{r}^{\left( d\right) }\left( \cdot \right)
\right) \right] =\mu \left[ \frac{1}{\mu ^{1-q}\left( B_{r}^{\left( d\right)
}\left( \cdot \right) \right) }\right] \ .
\end{equation}
\end{proof}

\begin{lemma}
\label{Ld3}For $q\neq 1,$%
\begin{align}
& \lim_{r\downarrow 0}\frac{\log \left( 2cr\right) ^{q-1}\int_{\Sigma }\mu
_{R}\left( d\xi \right) \frac{\mathfrak{t}\left( \xi \right) }{\mu _{R}^{q}%
\left[ \mathfrak{t}\right] }\mu _{R}^{q-1}\left( B_{cr}^{\left( d-1\right)
}\left( \xi \right) \right) }{\log r} \\
& =\lim_{r\downarrow 0}\frac{\log \left( \frac{2}{c}r\right)
^{q-1}\int_{\Sigma }\mu _{R}\left( d\xi \right) \frac{\mathfrak{t}\left( \xi
\right) }{\mu _{R}^{q}\left[ \mathfrak{t}\right] }\mu _{R}^{q-1}\left(
B_{c^{-1}r}^{\left( d-1\right) }\left( \xi \right) \right) }{\log r}  \notag
\\
& =\left( q-1\right) D_{q}\left( \nu _{R}\right) +q-1\ .  \notag
\end{align}
\end{lemma}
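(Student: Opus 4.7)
The plan is to rewrite both integrals in terms of $\nu_R$, use the bounded-roof hypothesis of Proposition \ref{D_q} to trade $\mu_R^{q-1}$ for $\nu_R^{q-1}$ up to multiplicative constants, and then extract the limit directly from the defining asymptotics of $D_q(\nu_R)$.

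First I would observe that, by the definition of $\nu_R$,
\begin{equation}
\int_\Sigma \mu_R(d\xi)\frac{\mathfrak{t}(\xi)}{\mu_R^q[\mathfrak{t}]}\mu_R^{q-1}(B_{cr}^{(d-1)}(\xi)) = \frac{1}{\mu_R^{q-1}[\mathfrak{t}]}\int_\Sigma \nu_R(d\xi)\mu_R^{q-1}(B_{cr}^{(d-1)}(\xi))\ .
\end{equation}
Since $\kappa\leq\mathfrak{t}\leq\kappa^{-1}$, the Radon--Nikodym density $\frac{d\nu_R}{d\mu_R}=\frac{\mathfrak{t}}{\mu_R[\mathfrak{t}]}$ is bounded above and below by strictly positive constants. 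Hence for every Borel set $B$ of positive $\mu_R$-measure and every $q\neq 1$ there is a constant $A_{q,\kappa}>0$, depending only on $q$, $\kappa$ and $\mu_R[\mathfrak{t}]$, such that
\begin{equation}
A_{q,\kappa}^{-1}\,\nu_R^{q-1}(B)\leq\mu_R^{q-1}(B)\leq A_{q,\kappa}\,\nu_R^{q-1}(B)\ ,
\end{equation}
the direction of the inequality being uniform in $q$ once constants are allowed to absorb the sign of $q-1$.

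Next I would invoke the definition of $D_q(\nu_R)$, namely
\begin{equation}
\lim_{r\downarrow 0}\frac{\log\int_\Sigma \nu_R(d\xi)\nu_R^{q-1}(B_{r}^{(d-1)}(\xi))}{\log r}=(q-1)D_q(\nu_R)\ ,
\end{equation}
together with the elementary fact that $\log(cr)/\log r\to 1$ as $r\downarrow 0$, so that the limit is unchanged if $B_r^{(d-1)}$ is replaced by $B_{cr}^{(d-1)}$ or $B_{c^{-1}r}^{(d-1)}$. Combining these ingredients, I would split
\begin{align}
&\frac{\log\Big((2cr)^{q-1}\int_\Sigma\mu_R(d\xi)\frac{\mathfrak{t}(\xi)}{\mu_R^q[\mathfrak{t}]}\mu_R^{q-1}(B_{cr}^{(d-1)}(\xi))\Big)}{\log r} \notag\\
&\qquad=(q-1)\frac{\log(2cr)}{\log r}-\frac{\log\mu_R^{q-1}[\mathfrak{t}]}{\log r}+\frac{\log\int_\Sigma\nu_R(d\xi)\mu_R^{q-1}(B_{cr}^{(d-1)}(\xi))}{\log r}\ ,
\end{align}
and let $r\downarrow 0$: the first term converges to $q-1$, the second to $0$, and the third, by the two-sided comparison between $\mu_R^{q-1}(B)$ and $\nu_R^{q-1}(B)$, to $(q-1)D_q(\nu_R)$. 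The sum gives $(q-1)D_q(\nu_R)+q-1$. The identical argument applied with $c$ replaced by $c^{-1}$ yields the second equality.

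I do not expect a genuine obstacle here; the only delicate point is the bookkeeping in the case $q<1$, where $q-1<0$ reverses the direction of the comparison between $\nu_R^{q-1}$ and $\mu_R^{q-1}$. Since both bounds still hold up to a multiplicative constant, and all such constants disappear under division by $\log r$, the conclusion follows uniformly in $q\neq 1$. The essence of the lemma is therefore that, under the bounded roof hypothesis, the measures $\nu_R$ and $\mu_R$ are comparable, and the prefactor $(2cr)^{q-1}$ contributes precisely the extra dimension $+1$ appearing in the statement of Proposition \ref{D_q}.
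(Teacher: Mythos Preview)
Your proposal is correct and follows essentially the same strategy as the paper's proof: both use the two-sided bound $\kappa\leq\mathfrak{t}\leq\kappa^{-1}$ to sandwich $\mu_R^{q-1}(B)$ between constant multiples of $\nu_R^{q-1}(B)$, rewrite the integrating measure as $\nu_R$, and observe that the resulting multiplicative constants and the radius rescaling $r\mapsto cr$ disappear upon dividing by $\log r$. The paper presents the two cases $q>1$ and $q<1$ separately with explicit constants, whereas you absorb both into a single constant $A_{q,\kappa}$, but the argument is the same.
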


\begin{proof}
For $q>1$%
\begin{align}
& \left( 2cr\right) ^{q-1}\int_{\Sigma }\mu _{R}\left( d\xi \right) \frac{%
\mathfrak{t}\left( \xi \right) }{\mu _{R}^{q}\left[ \mathfrak{t}\right] }\mu
_{R}^{q-1}\left( B_{cr}^{\left( d-1\right) }\left( \xi \right) \right) \\
& \leq \left( \frac{2cr}{\kappa }\right) ^{q-1}\int_{\Sigma }\nu _{R}\left(
d\xi \right) \nu _{R}^{q-1}\left( B_{cr}^{\left( d-1\right) }\left( \xi
\right) \right)  \notag
\end{align}%
and 
\begin{align}
& \left( \frac{2}{c}r\right) ^{q-1}\int_{\Sigma }\mu _{R}\left( d\xi \right) 
\frac{\mathfrak{t}\left( \xi \right) }{\mu _{R}^{q}\left[ \mathfrak{t}\right]
}\mu _{R}^{q-1}\left( B_{c^{-1}r}^{\left( d-1\right) }\left( \xi \right)
\right) \\
& \geq \left( 2cr\kappa \right) ^{q-1}\int_{\Sigma }\nu _{R}\left( d\xi
\right) \nu _{R}^{q-1}\left( B_{cr}^{\left( d-1\right) }\left( \xi \right)
\right) \ ,  \notag
\end{align}%
as well as, for $q<1,$%
\begin{align}
& \left( \frac{2}{c}r\right) ^{q-1}\int_{\Sigma }\mu _{R}\left( d\xi \right) 
\frac{\mathfrak{t}\left( \xi \right) }{\mu _{R}^{q}\left[ \mathfrak{t}\right]
}\mu _{R}^{q-1}\left( B_{c^{-1}r}^{\left( d-1\right) }\left( \xi \right)
\right) \\
& \leq \left( \frac{2c^{-1}r}{\kappa }\right) ^{q-1}\int_{\Sigma }\nu
_{R}\left( d\xi \right) \nu _{R}^{q-1}\left( B_{c^{-1}r}^{\left( d-1\right)
}\left( \xi \right) \right) \ ,  \notag \\
& \left( 2cr\right) ^{q-1}\int_{\Sigma }\mu _{R}\left( d\xi \right) \frac{%
\mathfrak{t}\left( \xi \right) }{\mu _{R}^{q}\left[ \mathfrak{t}\right] }\mu
_{R}^{q-1}\left( B_{cr}^{\left( d-1\right) }\left( \xi \right) \right) \\
& \geq \left( 2c^{-1}r\kappa \right) ^{q-1}\int_{\Sigma }\nu _{R}\left( d\xi
\right) \nu _{R}^{q-1}\left( B_{c^{-1}r}^{\left( d-1\right) }\left( \xi
\right) \right) \ .  \notag
\end{align}\end{proof}

In view of Lemmata \ref{Ld4} and \ref{Ld4bis}, for $q\neq 1,$ Proposition \ref{D_q} follows
from Lemma \ref{Ld3}.

\subsection*{Proof of Lemma \protect\ref{lemm}}

%\begin{proof}
Let $g$ be a positive symmetric mollifier, i.e. a smooth function such that:

\begin{itemize}
\item it is supported on $J;$

\item if $x_{J}$ is the middle point of $J,\int dxg\left( x-x_{J}\right) =1;$

\item $\forall f\in BL\left( J\right) ,$ setting $$J\ni x\longmapsto
g_{\sigma}\left( y\right) :=\sigma^{-1}g_{\sigma}\left( \frac{y-y_{J}}{\sigma%
}\right) \in\mathbb{R}_{+},$$ we have $$\lim_{\sigma\downarrow0}\int dx\frac{g\left( 
\frac{x-x_{J}}{\sigma}\right) }{\sigma}f\left( y\right) =f\left(
x_{J}\right).$$
\end{itemize}
For example we may choose 
\begin{equation}
J\ni y\longmapsto g\left( x\right) :=\frac{Z_{J}}{\sqrt{2\pi }}e^{-\frac{%
\left( x-x_{J}\right) ^{2}}{2}}\mathbf{1}_{J}\left( \left\vert
x-x_{J}\right\vert \right) \in \mathbb{R}_{+}\,
\end{equation}%
with $Z_{J}$ such that $\int_{J}dxg\left( x\right) =1.$ Then, $\forall f\in
BL\left( I\times J\right) $ and $\left( x,y\right) \in I\times J,$ by the
Schwartz inequality, we get 
\begin{align}
\left\vert \int_{J}dzg_{\sigma }\left( z-y\right) f\left( x,z\right)
-f\left( x,y\right) \right\vert & =\int_{J}dzg_{\sigma }\left( z-y\right)
\left\vert f\left( x,z\right) -f\left( x,y\right) \right\vert  \\
& \leq \int_{J}dzg_{\sigma }\left( z-y\right) \left\vert z-y\right\vert  
\notag \\
& \leq \sqrt{\int_{J}dzg_{\sigma }\left( z-y\right) \left( z-y\right) ^{2}}%
\leq Z_{J}\sigma \ ,  \notag
\end{align}%
where we have used that $\forall \left( x,y\right) \in I\times J,$%
\begin{equation}
\left\vert f\left( x,z\right) -f\left( x,y\right) \right\vert \leq
\left\Vert f\right\Vert _{BL\left( I\times J\right) }\left\vert
z-y\right\vert =\left\vert z-y\right\vert \ .
\end{equation}%
Hence, setting $\int_{J}dzg_{\sigma }\left( z-y\right) f\left( x,z\right)
=:\left( g_{\sigma }\ast f\right) \left( x,y\right) $ and considering the
bounded linear operators on the Banach space $BM\left( I\times J\right) $ of
bounded $\mathcal{B}\left( I\times J\right) $-measurable functions endowed
with the sup-norm 
\begin{align}
BM\left( I\times J\right) & \ni f\longmapsto T_{\varphi }f\left( \cdot
,\cdot \right) :=f\left( \cdot ,\varphi \left( \cdot \right) \right) \in
BM\left( I\times J\right) \ , \\
BM\left( I\times J\right) & \ni f\longmapsto G_{\sigma }f:=\left( g_{\sigma
}\ast f\right) \in BM\left( I\times J\right) \ ,
\end{align}%
for any $f\in BM\left( I\times J\right) $ we have 
\begin{equation}
T_{\varphi }\left( G_{\sigma }f\right) =\left( T_{\varphi }G_{\sigma
}\right) f=\left( T_{\varphi }g_{\sigma }\right) \ast f=g_{\sigma }\ast
\left( T_{\varphi }f\right) =G_{\sigma }\left( T_{\varphi }f\right) \ .
\end{equation}

Since $\forall f\in BL\left( I\times J\right) $ 
\begin{align}
& \left\vert \frac{1}{n}\sum_{k=1}^{n}f\circ R^{k}-\frac{1}{n}%
\sum_{k=1}^{n}\left( g_{\sigma}\ast f\right) \left( T^{k}\left( x\right)
,\varphi\circ T^{k-1}\left( x\right) \right) \right\vert \\
& \leq\frac{1}{n}\sum_{k=1}^{n}\left\vert f\left( T^{k}\left( x\right)
,\varphi\circ T^{k-1}\left( x\right) \right) -\left( g_{\sigma}\ast f\right)
\left( T^{k}\left( x\right) ,\varphi\circ T^{k-1}\left( x\right) \right)
\right\vert  \notag \\
& \leq\frac{1}{n}\sum_{k=1}^{n}\int_{J}dzg_{\sigma}\left( z-\varphi\circ
T^{k-1}\left( y\right) \right) \left\vert f\left( T^{k}\left( x\right)
,z\right) -f\left( T^{k}\left( x\right) ,\varphi\circ T^{k-1}\left( x\right)
\right) \right\vert  \notag \\
& \leq Z_{J}\sigma\ ,  \notag
\end{align}
we get 
\begin{align}
\frac{1}{n}\sum_{k=1}^{n}f\circ R^{k} & =\frac{1}{n}\sum_{k=1}^{n}f\left(
T^{k}\left( x\right) ,\varphi\circ T^{k-1}\left( x\right) \right) -\left(
g_{\sigma}\ast f\right) \left( T^{k}\left( x\right) ,\varphi\circ
T^{k-1}\left( x\right) \right) \\
& +\frac{1}{n}\sum_{k=1}^{n}\left( g_{\sigma}\ast f\right) \left(
T^{k}\left( x\right) ,\varphi\circ T^{k-1}\left( x\right) \right)  \notag \\
& \leq Z_{J}\sigma+\frac{1}{n}\sum_{k=1}^{n}\left( g_{\sigma}\ast f\right)
\left( T^{k}\left( x\right) ,\varphi\circ T^{k-1}\left( x\right) \right) 
\notag
\end{align}
and 
\begin{equation}
\frac{1}{n}\sum_{k=1}^{n}f\circ R^{k}\geq-Z_{J}\sigma+\frac{1}{n}\sum
_{k=1}^{n}\left( g_{\sigma}\ast f\right) \left( T^{k}\left( x\right)
,\varphi\circ T^{k-1}\left( x\right) \right) \ .
\end{equation}
But, 
\begin{align}
& \frac{1}{n}\sum_{k=1}^{n}\left( g_{\sigma}\ast f\right) \left( T^{k}\left(
x\right) ,\varphi\circ T^{k-1}\left( x\right) \right) = \\
& =\int_{J}dz\frac{1}{n}\sum_{k=1}^{n}g_{\sigma}\left( z-T^{k-1}\left(
x\right) \right) f\left( T^{k}\left( x\right) ,\varphi\left( z\right)
\right) \underset{n\uparrow\infty}{\longrightarrow}\int_{J}dz\int_{I}\mu
_{T}\left( dy\right) g_{\sigma}\left( z-y\right) f\left( y,\varphi\left(
z\right) \right) \ .  \notag
\end{align}
Letting $\sigma\downarrow0$ we obtain $\lim_{n\rightarrow\infty}\frac{1}{n}%
\sum_{k=1}^{n}f\circ R^{k}=\mu_{T}\left[ f\left( \cdot,\varphi\left(
\cdot\right) \right) \right] \ dxdy-a.s.,$ that is $R$ admits an invariant
measure $\mu_{R}$ whose support is the graph of $\varphi;$ in other words $%
spt\mu_{R}=\left\{ \left( x,y\right) \in I\times J:y=\varphi\left( x\right)
\right\} ,$ and disintegrates as $\mu_{T}\left( dx\right) \delta_{\left\{
\varphi\left( x\right) \right\} }\left( dy\right) $ where $\delta_{\left\{
z\right\} }\left( dy\right) $ stands for the Dirac mass at $\left\{
z\right\} .$
%\end{proof}\\

We remark that, as it clearly appears from the proof, this result is
independent of the choice of the mollifier $g.$

\subsection*{Proof of Theorem \protect\ref{propp}}

We will first prove the following result, which deals with the case of a
finite number of singularities, and then move to the infinite case.

\begin{proposition}
\label{propp0} Let $I\subset \mathbb{R}$ an interval, $n\geq 1$ and $\nu $ a
probability measure on $\left( I,\mathcal{B}\left( I\right) \right) $ that
is absolutely continuous with respect to Lebesgue with a density $\rho
_{n}:I\rightarrow \mathbb{R}_{+}$. Suppose $\rho _{n}$ is bounded away from $%
0$ on its support and is of the form

\begin{equation}
\rho _{n}=\psi _{0}+\sum_{k=1}^{n}\frac{\psi _{k}\chi _{k}}{%
|x-x_{k}|^{\alpha _{k}}}\ ,
\end{equation}%
where $\psi_0$ is bounded, $\{x_{k}\}_{k=1,...,n}\subset I$ and, for any $k=1,..,n,$

\begin{itemize}
\item $0<\alpha _{k}<1;$

\item $\psi _{k}$ is continuous at $x_k$ and $\psi _{k}(x_k)\neq 0$;

\item $\chi _{k}$ is either $\mathbf{1}_{[x_{k},+\infty )}$ or $\mathbf{1}%
_{(-\infty ,x_{k}]}.$
\end{itemize}

Then denoting $\alpha :=\underset{k=1,...,n}{\max }\{\alpha _{k}\},$

\begin{equation}
D_{q}(\nu )=%
\begin{cases}
1\ \text{if}\ q<1/\alpha \ , \\ 
\frac{q(1-\alpha )}{q-1}\ \text{otherwise}\ .%
\end{cases}%
\end{equation}
\end{proposition}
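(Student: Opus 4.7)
The plan is to estimate $I_q(r):=\int \nu(dx)\,\nu(B_r(x))^{q-1}$ directly, by splitting $I$ into a bulk region $W$ where $\rho_n$ is bounded above and below by positive constants, and small neighborhoods $V_k:=[x_k-\delta,x_k+\delta]\cap I$ of each singularity, with $\delta<\tfrac{1}{3}\min_{j\neq k}|x_j-x_k|$ chosen so that on $V_k$ only the $k$-th singular term is unbounded. On $W$, for $r$ small enough, one has $\nu(B_r(x))\asymp r$, so $W$ contributes $\asymp r^{q-1}$ to $I_q(r)$.

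On each $V_k$, write $\rho_n(x)=\psi_k(x)\chi_k(x)|x-x_k|^{-\alpha_k}+h_k(x)$ with $h_k$ bounded and $\psi_k$ continuous at $x_k$ with $\psi_k(x_k)\neq 0$. A direct integration (using continuity of $\psi_k$ at $x_k$ to pull out constants) yields, for $r$ sufficiently small,
\begin{equation}
\nu(B_r(x))\asymp
\begin{cases}
r^{1-\alpha_k} & \text{if } |x-x_k|\le 2r,\\
r\,|x-x_k|^{-\alpha_k} & \text{if } 2r<|x-x_k|\le \delta,
\end{cases}
\end{equation}
with constants depending only on $\psi_k(x_k)$ and $\alpha_k$. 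Splitting the integral over $V_k$ into the two regimes above, the inner piece $\{|x-x_k|\le 2r\}$ contributes $\asymp r^{q(1-\alpha_k)}$ (since the $\nu$-mass of this piece is $\asymp r^{1-\alpha_k}$), while the outer piece $\{2r<|x-x_k|\le\delta\}$ contributes $r^{q-1}\int_{2r}^{\delta}s^{-q\alpha_k}\,ds$, i.e.\ $\asymp r^{q-1}$ when $q\alpha_k<1$, $\asymp r^{q-1}\log(1/r)$ when $q\alpha_k=1$, and $\asymp r^{q(1-\alpha_k)}$ when $q\alpha_k>1$.

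Summing over $k$, the leading behavior of $I_q(r)$ as $r\downarrow 0$ is governed by the smallest exponent in $\{q-1\}\cup\{q(1-\alpha_k):k=1,\dots,n\}$. Since $q(1-\alpha_k)<q-1$ is equivalent to $q\alpha_k>1$, this smallest exponent equals $q-1$ when $q<1/\alpha$ (so $q\alpha_k<1$ for every $k$) and equals $q(1-\alpha)$ when $q>1/\alpha$, where $\alpha:=\max_k\alpha_k$. Dividing $\log I_q(r)$ by $(q-1)\log r$ and observing that any $\log(1/r)$ correction is swallowed in the limit gives the stated values of $D_q(\nu)$.

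The main obstacle is the two-sided estimate on $\nu(B_r(x))$ displayed above: establishing matching upper and lower bounds that are uniform across the transition regime $|x-x_k|\sim r$ and that absorb both the factor $\psi_k$ (assumed only continuous at $x_k$) and the bounded remainder $h_k$. Once this estimate is in hand with explicit constants, the rest reduces to a routine comparison of exponents, and the argument is insensitive to the sign of $q-1$: the same inequality $q(1-\alpha_k)\lessgtr q-1\iff q\alpha_k\gtrless 1$ picks out the dominant term both for $q>1$ (where it is the largest value among the $r^{a_i}$) and for $q<1$ (where, $r^{a_i}$ being unbounded, it remains the term with smallest exponent).
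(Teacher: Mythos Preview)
Your approach is correct and somewhat more streamlined than the paper's. Both arguments reduce to estimating $\int \nu(B_r(x))^{q-1}\,d\nu(x)$, but you first localize around each singularity and derive two-sided pointwise bounds on $\nu(B_r(x))$ in each regime, whereas the paper works globally: it bounds $\rho_n$ above by $c\sum_k|x-x_k|^{-\alpha}$ and below by the single term $|x-x_{k_0}|^{-\alpha}$, and then uses Jensen's inequality (and, for $1<q<2$, sub-additivity of $t\mapsto t^{q-1}$) to separate the $(q-1)$-th power of a sum. Your route avoids Jensen entirely and handles all $q\neq 1$ uniformly, while the paper treats $q<0$ and $q>1$ separately and fills in $q\in[0,1]$ via monotonicity of $q\mapsto D_q$.

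Two small points to tidy up. First, you do not address $q=1$; once you have $D_q(\nu)=1$ for $q$ slightly below and slightly above $1$, this follows from monotonicity of $D_q$, but it should be said. Second, the displayed two-sided estimate $\nu(B_r(x))\asymp r|x-x_k|^{-\alpha_k}$ for $2r<|x-x_k|\le\delta$ fails on the ``wrong'' side of a one-sided singularity $\chi_k$, where only the bounded part $h_k$ contributes and $\nu(B_r(x))\asymp r$. This does not affect the conclusion, since that region only adds a further term of order $r^{q-1}$ to $I_q(r)$; you should simply restrict the displayed asymptotic to the support side of $\chi_k$ and note that the other side is absorbed into the bulk contribution.
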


\begin{proof}
To ease the computations, we will write the proof assuming that the terms $%
\psi _{k}$ and $\chi _{k}$ are set equal to $1$ for any value of $k.$ This assumption will not affect the result which, as it
clearly appears from the development of the proof, depends only on the
order of the singularities of $\rho _{n}.$ Let us denote $k_{0}$ the
smallest index that realizes $\alpha _{k_{0}}=\alpha $ and set 
\begin{equation}
I_{q}(r):=\int_{I}\nu (B_{r}(x))^{q-1}d\nu (x)\ .
\end{equation}%
All along the proof, $c$ and $c_{i}$ denote positive constants whose values
can change from line to line. Let us first deal with the case $q<0.$ Since 
\begin{equation}
|x-x_{k_{0}}|^{-\alpha }\leq \rho _{n}(x)\leq
c\sum_{k=1}^{n}|x-x_{k}|^{-\alpha }\ ,  \label{q0}
\end{equation}%
we have 
\begin{equation}
I_{q}(r)\leq c\sum_{l=0}^{n}\int_{I}dx(x-x_{l})^{-\alpha }\left[
\int_{x-r}^{x+r}(y-x_{k_{0}})^{-\alpha }dy\right] ^{q-1}\ .  \label{qq0}
\end{equation}%
Since, for all $k=1,...,n,$%
\begin{equation}
\int_{x-r}^{x+r}dy|y-x_{k}|^{-\alpha }\asymp r\mathbf{1}%
_{B_{r}^{c}(x_{k})}(x)+\mathbf{1}_{B_{r}(x_{k})}(x)(|x-x_{k}|+r)^{1-\alpha
}\ ,  \label{qq1}
\end{equation}%
(\ref{qq0}) and (\ref{qq1}) yield 
\begin{align}
I_{q}(r)& \leq cr^{q-1}\sum_{l=1}^{n}\int_{I}dx|x-x_{l}|^{-\alpha }\mathbf{1}%
_{B_{r}^{c}(x_{k_{0}})}(x)+  \label{Iqr} \\
& +c\sum_{1\leq l\leq n\ :\ l\neq k_{0}}\int_{I}dx\left\vert
x-x_{l}\right\vert ^{-\alpha }(|x-x_{k_{0}}|+r)^{(1-\alpha )(q-1)}]\mathbf{1}%
_{B_{r}(x_{k_{0}})}(x)+  \notag \\
& +c\int_{I}dx\left\vert x-x_{k_{0}}\right\vert ^{-\alpha
}(|x-x_{k_{0}}|+r)^{(1-\alpha )(q-1)}\mathbf{1}_{B_{r}(x_{k_{0}})}(x)\ , 
\notag
\end{align}%
The integral in the first term on the r.h.s. of (\ref{Iqr}) is finite, hence
this term is bounded by $c_{1}r^{q-1}.$ The second term on the r.h.s. (\ref%
{Iqr}) is bounded by $c_{2}r^{(1-\alpha )q+\alpha },$ while the third term
is bounded by $c_{3}r^{\left( 1-\alpha \right) q}.$ Therefore,

\begin{equation}
I_{q}(r)\leq c_{1}r^{q-1}+c_{2}r^{(1-\alpha )q+\alpha }+c_{3}r^{\left(
1-\alpha \right) q}\ .
\end{equation}%
Since $q<0$ and $0<\alpha <1,$%
\begin{equation}
I_{q}(r)\leq c_{1}r^{q-1}\left( 1+o(r^{q-1})\right) \ ,
\end{equation}%
then 
\begin{equation}
D_{q}(\nu )\geq 1\ .
\end{equation}%
Since $\mathbb{R}_{+}\ni x\longmapsto x^{q-1}\in \mathbb{R}$ is a convex
function, by (\ref{q0}) and by the Jensen inequality, 
\begin{eqnarray}
I_{q}(r) &\geq &c\int_{I}dx(x-x_{k_{0}})^{-\alpha }\left[ \sum_{k=0}^{n}%
\int_{x-r}^{x+r}(y-x_{k})^{-\alpha }dy\right] ^{q-1}  \label{q1} \\
&\geq &c\left[ \sum_{k=0}^{n}\int_{I}dx(x-x_{k_{0}})^{-\alpha
}\int_{x-r}^{x+r}(y-x_{k})^{-\alpha }dy\right] ^{q-1}\ .  \notag
\end{eqnarray}
Thus, the upper bound of $D_{q}\left( \nu \right) $ is obtained in similar
fashion combining (\ref{q1}) and (\ref{qq1}).

Let us now consider $q>1.$ From (\ref{q0}) we have 
\begin{equation}
I_{q}(r)\geq c\int_{I}dx|x-x_{k_{0}}|^{-\alpha }\left[
\int_{x-r}^{x+r}|y-x_{k_{0}}|^{-\alpha }dy\right] ^{q-1}\ ,
\end{equation}%
which yields, as for the previous cases, 
\begin{equation}
I_{q}(r)\geq c_{1}r^{q-1}+c_{2}r^{(1-\alpha )q+\alpha }+c_{3}r^{\left(
1-\alpha \right) q}\ .
\end{equation}%
If $q>1/\alpha ,$ 
\begin{equation}
I_{q}(r)\geq c_{3}r^{\left( 1-\alpha \right) q}\left( 1+o(r^{\left( 1-\alpha
\right) q})\right) \ ,
\end{equation}%
which implies 
\begin{equation}
D_{q}(\nu )\leq \frac{(1-\alpha )q}{q-1}\ .
\end{equation}%
On the other hand, if $1<q\leq 1/\alpha ,$%
\begin{equation}
I_{q}(r)\geq c_{1}r^{q-1}\left( 1+o(r^{q-1})\right) \ ,
\end{equation}%
which leads to 
\begin{equation}
D_{q}(\nu )\leq 1\ .
\end{equation}%
For what concerns the lower bound, we have 
\begin{eqnarray}
I_{q}(r) &\leq &c\sum_{l=0}^{n}\int_{I}dx|x-x_{l}|^{-\alpha }\left[
\sum_{k=0}^{n}\int_{x-r}^{x+r}|y-x_{k}|^{-\alpha }dy\right] ^{q-1} \\
&\leq &c\sum_{k,l=0}^{n}\int_{I}dx|x-x_{l}|^{-\alpha }\left[
\int_{x-r}^{x+r}|y-x_{k}|^{-\alpha }dy\right] ^{q-1}\ .  \notag
\end{eqnarray}%
Indeed, the last inequality is obtained for $q\geq 2$ by Jensen inequality,
while for $1<q<2,$ it comes from the fact that 
\begin{equation}
(a+b)^{q-1}\leq a^{q-1}+b^{q-1}
\end{equation}%
for any $a,b>0.$ Combining with (\ref{qq1}), we get 
\begin{align}
I_{q}(r)& \leq cr^{q-1}\sum_{k,l=1}^{n}\int_{I}dx|x-x_{l}|^{-\alpha }\mathbf{%
1}_{B_{r}^{c}(x_{k})}(x)+ \\
& +c\sum_{1\leq k\neq l\leq n}\int_{I}dx\left\vert x-x_{l}\right\vert
^{-\alpha }(|x-x_{k}|+r)^{(1-\alpha )(q-1)}\mathbf{1}_{B_{r}(x_{k_{0}})}(x)+
\notag \\
& +c\sum_{1\leq m\leq n}\int_{I}dx\left\vert x-x_{m}\right\vert ^{-\alpha
}(|x-x_{m}|+r)^{(1-\alpha )(q-1)}\mathbf{1}_{B_{r}(x_{m})}(x)\ ,  \notag
\end{align}%
As before, we have the following estimate: 
\begin{equation}
I_{q}^{N}(r)\leq c_{1}r^{q-1}+c_{2}r^{(1-\alpha )q+\alpha }+c_{3}r^{\left(
1-\alpha \right) q}\ .
\end{equation}%
If $q\geq 1/\alpha ,$ we get

\begin{equation}
D_{q}(\nu )\geq \frac{(1-\alpha )q}{q-1}\ ,
\end{equation}%
while if $1<q<1/\alpha ,$

\begin{equation}
D_{q}(\nu )\geq 1\ .
\end{equation}%
Combining all the estimates, we get the thesis for $q\in \mathbb{R}\setminus
\lbrack 0,1]$. Since $D_{q}(\nu )$ is a non-increasing function of $q,$ and since $%
D_{-1}(\nu )=D_{1/\alpha }(\nu )=1,$ we get that $D_{q}(\nu )=1$ for $q\in
\lbrack 0,1],$ which concludes the proof.
\end{proof}\\
We can now finalize the proof of Theorem \protect\ref{propp}.
Let us set $\rho _{0}:=\psi _{0}$ and $\forall k\geq 1$,

\begin{equation}
I\ni x\longmapsto
\rho _{k}\left( x\right) :=\frac{\psi _{k}\left( x\right) \chi _{k}\left(
x\right) }{\left\vert x-x_{k}\right\vert ^{\alpha _{k}}}\in \mathbb{R}_{+}.
\end{equation}
Moreover, let $\nu _{n}:=\sum_{k=0}^{n}\rho _{k}.$ Since $\forall n\geq
1,$

\begin{equation}
1=\nu \left[ 1\right] =\nu _{n}\left[ 1\right] +\int_{I}dx\sum_{k\geq
n+1}\rho _{k}\left( x\right) ,
\end{equation}
setting $c_{n}:=\int_{I}dx\sum_{k\geq
n+1}\rho _{k}\left( x\right) ,$ because $\forall k\geq
1,a_{k}:=\int_{I}dx\rho _{k}\left( x\right)
>0,$ 
\begin{equation}
c_{n}=a_{n+1}+c_{n+1}>c_{n+1} \, .   
\end{equation}
Hence, by the monotone convergence
theorem, 
\begin{equation}
\lim_{n\rightarrow \infty }c_{n}=\inf_{n\geq 0}c_{n}\geq 0 \, .    
\end{equation}
Therefore, 
\begin{eqnarray}
0 &<&\lim_{n\rightarrow \infty }\nu _{n}\left[ 1\right] =\lim_{n\rightarrow
\infty }\int_{I}dx\sum_{k=0}^{n}\rho _{k}\left( x\right) =\lim_{n\rightarrow
\infty }\sum_{k=0}^{n}\int_{I}dx\rho _{k}\left( x\right) = \\
&=&\int_{I}dx\psi _{0}\left( x\right) +\sum_{k\geq 1}\int_{I}dx\rho
_{k}\left( x\right) =1-\inf_{n\geq 0}c_{n}\leq 1\ ,  \notag
\end{eqnarray}%
as well as, $\forall l\geq 1,$%
\begin{equation}
0<\lim_{m\rightarrow \infty }\int_{I}dx\sum_{k=l}^{m}\rho _{k}\left(
x\right) =\lim_{m\rightarrow \infty }\sum_{k=l}^{m}\int_{I}dx\rho _{k}\left(
x\right) =\sum_{k\geq l}\int_{I}dx\rho _{k}\left( x\right) \leq 1\ .
\end{equation}%
Consequently, $$\lim_{n\rightarrow \infty }c_{n}=\lim_{n\rightarrow \infty
}\int_{I}dx\sum_{k\geq n+1}\rho _{k}\left( x\right) =\lim_{n\rightarrow
\infty }\sum_{k\geq n+1}\int_{I}dx\rho _{k}\left( x\right) =0.$$
Thus, $\forall \varepsilon >0,\exists n_{\varepsilon }\geq 1$ such that $%
\forall n>n_{\varepsilon },c_{n}<\varepsilon $ so that 

\begin{equation}
\nu _{n}\left[ 1%
\right] \leq \nu \left[ 1\right] \leq \nu _{n}\left[ 1\right] +\varepsilon .
\end{equation}
Therefore, for any $q>1$ and $n>n_{\varepsilon },$%
\begin{align}
\nu \left[ \left( \nu \left[ \mathbf{1}_{B_{r}\left( \cdot \right) }\right]
\right) ^{q-1}\right] &\leq \nu _{n}\left[ \left( \nu \left[ \mathbf{1}
_{B_{r}\left( \cdot \right) }\right] \right) ^{q-1}\right]
+\int_{I}dx\sum_{k\geq n+1}\rho _{k}\left( x\right) \left( \nu \left[ 
\mathbf{1}_{B_{r}\left( x\right) }\right] \right) ^{q-1}\\
&\leq \nu _{n}\left[ \left( \nu _{n}\left[ \mathbf{1}_{B_{r}\left( \cdot
\right) }\right] +\int_{I}dx\sum_{k\geq n+1}\rho _{k}\left( x\right) \mathbf{%
1}_{B_{r}\left( x\right) }\right) ^{q-1}\right] +c_{n}\\
&\leq \nu _{n}\left[ \left( \nu _{n}\left[ \mathbf{1}_{B_{r}\left( \cdot
\right) }\right] +c_{n}\right) ^{q-1}\right] +c_{n}\\
&\leq \nu _{n}\left[
\left( \nu _{n}\left[ \mathbf{1}_{B_{r}\left( \cdot \right) }\right]
+\varepsilon \right) ^{q-1}\right] +\varepsilon \ .  \notag
\end{align}%
For $q\in \left( 1,2\right) ,$%
\begin{eqnarray}
\nu \left[ \left( \nu \left[ \mathbf{1}_{B_{r}\left( \cdot \right) }\right]
\right) ^{q-1}\right] &\leq &\nu _{n}\left[ \nu _{n}^{q-1}\left[ \mathbf{1}%
_{B_{r}\left( \cdot \right) }\right] +\varepsilon ^{q-1}\right] +\varepsilon
\\
&=&\nu _{n}\left[ \nu _{n}^{q-1}\left[ \mathbf{1}_{B_{r}\left( \cdot \right)
}\right] \right] +\varepsilon ^{q-1}\left( 1+\varepsilon ^{2-q}\right) \ , 
\notag
\end{eqnarray}%
while for $q\geq 2,$%
\begin{align}
\left( \nu _{n}\left[ \mathbf{1}_{B_{r}\left( \cdot \right) }\right]
+\varepsilon \right) ^{q-1} &=\left( \nu _{n}\left[ \mathbf{1}_{B_{r}\left(
\cdot \right) }\right] \right) ^{q-1}+\varepsilon \left( q-1\right)
\int_{0}^{1}ds\left( \nu _{n}\left[ \mathbf{1}_{B_{r}\left( \cdot \right) }%
\right] +s\varepsilon \right) ^{q-2}\\
&\leq \left( \nu _{n}\left[ \mathbf{1}_{B_{r}\left( \cdot \right) }\right]
\right) ^{q-1}+\varepsilon ^{q-1}\ ,  \notag
\end{align}%
that is 
\begin{equation}
\nu _{n}\left[ \left( \nu _{n}\left[ \mathbf{1}_{B_{r}\left( \cdot \right) }%
\right] +\varepsilon \right) ^{q-1}\right] +\varepsilon \leq \nu _{n}\left[
\left( \nu _{n}\left[ \mathbf{1}_{B_{r}\left( \cdot \right) }\right] \right)
^{q-1}\right] +\varepsilon \left( 1+\varepsilon ^{q-2}\right) \ .
\end{equation}%
Setting $\varepsilon _{q}:=\varepsilon ^{q-1}\left( 1+\varepsilon
^{2-q}\right) \wedge \varepsilon \left( 1+\varepsilon ^{q-2}\right) ,\forall
q>1,$%
\begin{eqnarray}
\log \nu \left[ \left( \nu \left[ \mathbf{1}_{B_{r}\left( \cdot \right) }%
\right] \right) ^{q-1}\right] &\leq &\log \left\{ \nu _{n}\left[ \left( \nu
_{n}\left[ \mathbf{1}_{B_{r}\left( \cdot \right) }\right] \right) ^{q-1}%
\right] +\varepsilon _{q}\right\} \\
&=&\log \nu _{n}\left[ \left( \nu _{n}\left[ \mathbf{1}_{B_{r}\left( \cdot
\right) }\right] \right) ^{q-1}\right] +  \notag \\
&&+\varepsilon _{q}\int_{0}^{1}ds\frac{1}{\nu _{n}\left[ \left( \nu _{n}%
\left[ \mathbf{1}_{B_{r}\left( \cdot \right) }\right] \right) ^{q-1}\right]
+s\varepsilon _{q}}\ .  \notag
\end{eqnarray}%
Dividing by $\log r$ and taking the limit as $r\downarrow 0,$ by the
Lagrange mean value theorem, we get $D_{q}\left( \nu \right) \geq
D_{q}\left( \nu _{n}\right) .$
On the other hand, 
\begin{eqnarray}
\nu \left[ \left( \nu \left[ \mathbf{1}_{B_{r}\left( \cdot \right) }\right]
\right) ^{q-1}\right] &=&\left( \nu _{n}\left[ \mathbf{1}_{B_{r}\left( \cdot
\right) }\right] +\int_{I}dx\sum_{k\geq n+1}\rho _{k}\left( x\right) \mathbf{%
1}_{B_{r}\left( x\right) }\right) ^{q-1} \\
&\geq &\left( \nu _{n}\left[ \mathbf{1}_{B_{r}\left( \cdot \right) }\right]
\right) ^{q-1}\ ,  \notag
\end{eqnarray}%
which implies $D_{q}\left( \nu \right) \leq D_{q}\left( \nu _{n}\right) .$\\

Let now $q<1.$ For any $1\leq l\leq n$%
\begin{eqnarray}
\nu \left[ \frac{1}{\left( \nu \left[ \mathbf{1}_{B_{r}\left( \cdot \right) }%
\right] \right) ^{1-q}}\right] &\leq &\nu \left[ \frac{1}{\left( \nu _{n}%
\left[ \mathbf{1}_{B_{r}\left( \cdot \right) }\right] \right) ^{1-q}}\right]
=\nu _{n}\left[ \frac{1}{\left( \nu _{n}\left[ \mathbf{1}_{B_{r}\left( \cdot
\right) }\right] \right) ^{1-q}}\right] + \\
&&+\int_{I}dx\sum_{k\geq n+1}\rho _{k}\left( x\right) \frac{1}{\left( \nu
_{n}\left[ \mathbf{1}_{B_{r}\left( x\right) }\right] \right) ^{1-q}}  \notag
\\
&\leq &\nu _{n}\left[ \frac{1}{\left( \nu _{n}\left[ \mathbf{1}_{B_{r}\left(
\cdot \right) }\right] \right) ^{1-q}}\right] +\int_{I}dx\sum_{k\geq
n+1}\rho _{k}\left( x\right) \frac{1}{\left( \int_{x-r}^{x+r}dy\rho
_{l}\left( y\right) \right) ^{1-q}}\ .  \notag
\end{eqnarray}%
Proceeding as in the proof of the upper bound (\ref{Iqr}) we obtain that
there exist $c_{1},c_{2}>0$ such that 
\begin{eqnarray}
\nu \left[ \frac{1}{\left( \nu \left[ \mathbf{1}_{B_{r}\left( \cdot \right) }%
\right] \right) ^{1-q}}\right] &\leq &c_{1}r^{\left( q-1\right)
}+c_{2}r^{\left( 1-\alpha _{l}\right) q+\alpha _{l}} \\
&=&r^{\left( q-1\right) }\left( c_{1}+c_{2}r^{\alpha _{l}\left( 1-q\right)
+1}\right) \ .  \notag
\end{eqnarray}%
Moreover, since there exists a positive constant $c_{3}$ such that $$%
\int_{I}\nu _{n}\left( dy\right) \mathbf{1}_{B_{r}\left( x\right) }\geq
\int_{x-r}^{x+r}dy\psi _{0}\left( y\right) =c_{3}r,$$%
\begin{eqnarray}
\nu \left[ \frac{1}{\left( \nu \left[ \mathbf{1}_{B_{r}\left( \cdot \right) }%
\right] \right) ^{1-q}}\right] &\geq &\nu _{n}\left[ \frac{1}{\left( \nu _{n}%
\left[ \mathbf{1}_{B_{r}\left( \cdot \right) }\right] +\varepsilon \right)
^{1-q}}\right] \\
&=&\nu _{n}\left[ \frac{1}{\left( \nu _{n}\left[ \mathbf{1}_{B_{r}\left(
\cdot \right) }\right] \right) ^{1-q}}\left( 1-\frac{\varepsilon }{\nu _{n}%
\left[ \mathbf{1}_{B_{r}\left( \cdot \right) }\right] +\varepsilon }\right)
^{1-q}\right]  \notag \\
&\geq &\nu _{n}\left[ \frac{1}{\left( \nu _{n}\left[ \mathbf{1}_{B_{r}\left(
\cdot \right) }\right] \right) ^{1-q}}\right] \left( 1-\frac{\varepsilon }{%
c_{3}r+\varepsilon }\right) ^{1-q}\ .  \notag
\end{eqnarray}%
Hence, for $q<1$ we get $1\leq D_{q}\left( \nu \right) \leq D_{q}\left( \nu
_{n}\right).$ Since by Proposition \ref{propp0}, for $q<1,D_{q}\left( \nu
_{n}\right) =1,$ this implies $D_{q}\left( \nu \right) =D_{q}\left( \nu
_{n}\right)=1 .$ Finally, since $D_q(\nu)$ is a non-increasing function of $q$, we get 
$D_{1}\left( \nu \right) =1.$\\

\section{Aknowledgements}

TC was partially supported by CMUP, which is financed by national funds
through FCT -- Funda\c{c}ao para a Ci\^{e}ncia e Tecnologia, I.P., under the
project with reference UIDB/00144/2020. TC also acknowledges the Department
of Mathematics and Computer Science of the University of Calabria, where part of
this research was carried on, for hospitality and support under the VIS 2023
project. TC thanks J\'{e}r\^{o}me Rousseau and Mike Todd for useful
discussions concerning generalized dimensions and Jorge M. Freitas for his
expertise on unimodal maps. MG is partially supported by GNAMPA and
acknowledges the CMUP, where part of this work was done, for hospitality and
support. Both authors thank Sandro Vaienti for his insightful comments.

\end{document}